\newtheorem{theorem}{Theorem}[section]
\newtheorem{definition}[theorem]{Definition}
\newtheorem{lemma}[theorem]{Lemma}
\newtheorem{proposition}[theorem]{Proposition}
\newtheorem{corollary}[theorem]{Corollary}
\newtheorem{example}[theorem]{Example}
\newtheorem{remark}[theorem]{Remark}
\numberwithin{equation}{section}
\newcommand{\fra}[2]{\displaystyle{\frac{#1}{#2}}}
\newcommand{\ka}{\kappa}
\newcommand{\la}{\lambda}
\newcommand{\w}[1]{\widetilde{#1}}
\begin{document}
\title[Almost cosymplectic and almost Kenmotsu $(\kappa,\mu,\nu)$-spaces]{The curvature tensor of almost cosymplectic and almost Kenmotsu $(\kappa,\mu,\nu)$-spaces}

\author[A. Carriazo]{Alfonso Carriazo}
\address[Alfonso Carriazo and Ver\'{o}nica Mart\'{\i}n-Molina]{Department of Geometry and Topology\\
Faculty of Mathematics\\
University of Sevilla\\
Aptdo. de Correos 1160\\
41080 -- Sevilla, SPAIN}
\thanks{Both authors are partially supported by the MTM2011-22621 grant from the MEC (Spain) and by the PAI group FQM-327 (Junta de Andaluc\'{\i}a, Spain).}
\email[Alfonso Carriazo]{carriazo@us.es}
\author[V. Mart\'{\i}n-Molina]{Ver\'{o}nica Mart\'{\i}n-Molina}
\email[Ver\'{o}nica Mart\'{\i}n-Molina]{veronicamartin@us.es}

\subjclass[2010]{53C15, 53C25}
\keywords{generalized $(\kappa, \mu)$-space, $(\kappa,\mu, \nu)$-space, generalized Sasakian space form, almost cosymplectic, almost Kenmotsu}

\begin{abstract}
\noindent We study the Riemann curvature tensor of  \emph{$(\kappa,\mu,\nu)$-spaces} when they have almost cosymplectic and almost Kenmotsu structures, giving its writing explicitly. This leads to the definition and study of  a natural generalisation of the contact metric \emph{$(\kappa,\mu,\nu)$-spaces}. We present examples or obstruction results of these spaces in all possible cases.
\end{abstract}

\maketitle

\section{Introduction}

The study of the curvature tensor of a Riemannian manifold as a tool to classify it constitutes an important part of Differential Geometry.  In particular, many advances have been made recently when the manifold is a generalized $(\ka,\mu)$-space  or a $(\ka,\mu,\nu)$-space with constant $\phi$-sectional curvature, called generalized $(\ka,\mu)$-space form and $(\ka,\mu,\nu)$-space form, respectively. These manifolds are considered of great importance by all researchers who are currently working on contact metric geometry and related topics. The contact metric $(\ka,\mu)$-spaces were originally introduced (under a different name) by D. E. Blair, T. Koufogiorgos and V. J. Papantoniou in \cite{bisr} as those contact metric manifolds satisfying the equation
\begin{equation}\label{kappamu}
R(X,Y)\xi = \kappa \{ \eta (Y)X-\eta (X) Y\} + \mu\{\eta (Y)hX-\eta (X)hY\},
\end{equation}
for every  vector fields $X,Y$ on $M$, where $\kappa$ and $\mu$ are constants,
$h=1/2 L_\xi \phi$ and $L$ is the usual Lie derivative. These
spaces include the Sasakian manifolds ($\kappa=1$ and $h=0$), but
the non-Sasakian examples have proven to be even more interesting.
In \cite{kouf2000}, the contact metric \emph{generalized $(\ka,\mu)$-spaces} were introduced as contact metric manifolds satisfying the equation \eqref{kappamu} with $\ka,\mu$ functions.

The curvature tensor of a contact metric $(\kappa,\mu)$-space form was shown by T. Koufogiorgos in \cite{kouf97} to have the form
\begin{equation*}
R=\fra{F+3}{4} R_1+\fra{F-1}{4} R_2+ \left( \fra{F+3}{4}- \ka \right) R_3+R_4+\fra{1}{2} R_5+(1-\mu)R_6,
\end{equation*}
where $R_1, \ldots, R_6$ are the tensors
\begin{equation}\label{R1-R6}
\begin{aligned}
R_1(X,Y)Z &=g(Y,Z)X-g(X,Z)Y, \\
R_2(X,Y)Z &=g(X,\phi Z)\phi Y-g(Y,\phi Z)\phi X+2g(X,\phi Y)\phi Z, \\
R_3(X,Y)Z &= \eta(X)\eta(Z)Y -\eta(Y)\eta(Z)X +g(X,Z)\eta(Y)\xi-g(Y,Z)\eta(X)\xi. \\
R_4(X,Y)Z &=g(Y,Z)hX-g(X,Z)hY+g(hY,Z)X-g(hX,Z)Y,  \\
R_5(X,Y)Z &=g(hY,Z)hX-g(hX,Z)hY+g(\phi hX,Z) \phi hY-g(\phi h Y,Z) \phi hX,\\
R_6(X,Y)Z &= \eta(X)\eta(Z)hY-\eta(Y)\eta(Z)hX+g(hX,Z) \eta (Y) \xi - g(hY,Z) \eta (X) \xi,
\end{aligned}
\end{equation}
for every vector fields $X,Y,Z$ on $M$, where $F$ is the constant $\phi$-sectional curvature.

This result led the authors (jointly with M. M. Tripathi) to define in \cite{nuestro} a \emph{generalized $(\ka,\mu)$-space form} as an almost contact metric manifold whose curvature tensor can be written as
\begin{equation} \label{def-space}
R=f_1R_1+f_2R_2+f_3R_3+f_4R_4+f_5R_5+f_6R_6,
\end{equation}
where $f_1, \ldots, f_6$ are functions on $M$ and $R_1,\ldots,R_6$ the previously defined tensors. They denoted them by $M(f_1, \ldots,f_6)$ and shortened their name to \emph{g.$(\ka,\mu)$-s.f.} In that same paper, they also studied these spaces with contact metric structure in every dimension, giving examples of them for all cases.

In a later work, \cite{nuestro2}, they continued studying them when they have contact metric structure and showing what happens when they are $D_a$-homothetically deformed, which preserves their structure in dimension $3$, although with different functions $f_1,\ldots,f_6$. In dimension greater than or equal to $5$, a small change in the definition was needed, which meant the introduction of \emph{generalized $(\ka,\mu)$-spaces with divided $R_5$}, denoted by $M(f_1, \ldots,f_{5,1},f_{5,2},f_6)$, as almost contact metric manifolds with curvature tensor of the form
\begin{equation*}
R=f_1R_1+f_2R_2+f_3R_3+f_4R_4+f_{5,1}R_{5,1}+f_{5,2} R_{5,2}+f_6R_6,
\end{equation*}
with $f_1,\ldots, f_6$ functions on $M$ and $R_{5,1}, R_{5,2}$ the tensors
\begin{align}
R_{5,1}(X,Y)Z&=g(hY,Z)hX-g(hX,Z)hY,     \label{def-R51}\\
R_{5,2}(X,Y)Z&=g(\phi h Y,Z) \phi hX- g(\phi hX,Z) \phi hY.     \label{def-R52}
\end{align}
It is obvious that $R_5=R_{5,1}-R_{5,2}$, so the \emph{generalized $(\ka,\mu)$-spaces with divided $R_5$} include the \emph{generalized $(\ka,\mu)$-spaces}. If we apply a $D_a$-homothetic deformation to a contact metric \emph{generalized $(\ka,\mu)$-space with divided $R_5$} of any dimension, we obtain another one (with different functions), which provides us with infinitely many examples.

Going beyond \emph{generalized $(\kappa,\mu)$-spaces}, T. Koufogiorgos, M. Markellos and V. J. Papantoniou introduced in \cite{kouf2008} the notion of \emph{$(\kappa,\mu,\nu)$-contact metric manifold}, where now the equation to be satisfied is
\begin{equation} \label{kappamunu}
\begin{split}
R( X,Y) \xi &=\kappa \{ \eta ( Y) X-\eta ( X) Y\} +\mu \{ \eta ( Y) hX-\eta ( X) hY\} \\
&+\nu \{ \eta ( Y) \phi hX-\eta ( X) \phi hY\},
\end{split}
\end{equation}
for some smooth functions $\kappa, \mu $, and $\nu $ on $M$.

They proved that this type of manifold is  intrinsically related to the harmonicity of the Reeb vector on contact metric $3$-manifolds. In dimension greater than or equal to $5$, these manifolds  must be $(\ka,\mu)$-spaces but in dimension $3$ there are examples with non-zero $\nu$ and non-constant  $\ka$ or $\mu$. Some other authors have also studied manifolds satisfying condition \eqref{kappamunu}, but with a non-contact metric structure.
Such is the case of  P. Dacko and Z. Olszak, who in \cite{dacko2005}  defined an  \emph{almost cosymplectic $(\kappa,\mu,\nu)$-space} as an almost cosymplectic manifold that satisfies \eqref{kappamunu}, but with $\kappa,\mu,\nu$ functions varying exclusively in the direction of $\xi$. Later, they gave in \cite{dacko2005-b} examples of this type of manifolds.
G. Dileo and A. M. Pastore  analysed in \cite{dileo09} the  $(\kappa,\mu)$-spaces and the  $(\kappa,0,-\mu)$-spaces with almost Kenmotsu structure. Lastly, H. \"{O}zt\"{u}rk, N. Aktan and C. Murathan studied in \cite{murathan-arxiv} the almost $\alpha$-cosymplectic $(\kappa,\mu,\nu)$-spaces under different conditions (like $\eta$-parallelism) and gave an interesting example in dimension $3$.  Some of their results are used in this paper but we later employ a different approach to the study of $(\kappa,\mu,\nu)$-spaces, concentrating on the writing of the curvature tensor and its relation to generalized $(\ka,\mu,\nu)$-space forms.

Therefore, from now on  we will denote by $(\ka,\mu)$-space or $(\ka,\mu,\nu)$-space an almost contact metric manifold satisfying equations \eqref{kappamu} or \eqref{kappamunu}, respectively,   and we will  specify  its structure explicitly.

Recently, the authors (jointly with K. Arslan and C. Murathan) proved in  \cite{nuestro-murathan}  that the curvature tensor of a  $(\ka,\mu,\nu)$-contact metric manifold of dimension $3$ is not unique  and can be written, among others, as
\begin{equation*}
R=FR_1+(F-\ka)R_3+\mu R_4+\nu R_7=FR_1+(F-\ka)R_3+\mu R_4-\nu R_8,
\end{equation*}
where $R_7$ and $R_8$ are the tensors
\begin{align}
R_7&= g(Y,Z)\phi h X-g(X,Z)\phi h Y+g(\phi h Y,Z)X-g(\phi h X,Z),Y,     \label{def-R7}\\
R_8&=\eta(X)\eta(Z)\phi h Y- \eta(Y)\eta(Z)\phi h X+g(\phi h X,Z\eta(Y)\xi-g(\phi h Y,Z)\eta(X)\xi.     \label{def-R8}
\end{align}

It is important to note that $R_7 = -R_8$ in dimension 3 but not in general. This led to the introduction in the same paper of the \emph{generalized $(\ka,\mu,\nu)$-space forms} as those almost contact metric manifolds whose curvature tensor can be written as
\begin{equation} \label{def-space-nu}
R=f_1R_1+f_2R_2+f_3R_3+f_4R_4+f_5R_5+f_6R_6+f_7R_7+f_8R_8,
\end{equation}
where $R_1, \ldots,R_8$ are the tensors previously seen in \eqref{R1-R6}, \eqref{def-R7} and \eqref{def-R8}. They shortened their  name to \emph{g.$(\ka,\mu,\nu)$-s.f.} and denoted them by $M(f_1,\ldots,f_8)$. They also studied these spaces with contact metric structure, giving examples or proving their non-existence  in every dimension.

Despite their technical appearance, there are good reasons for studying  contact metric $(\kappa,\mu,\nu)$-spaces and, therefore, \emph{generalized $(\kappa,\mu,\nu)$-space forms}.
The first is that the condition \eqref{kappamunu} remains invariant under D-homothetic deformations, although the values $\kappa$, $\mu$ and $\nu$ may change. Moreover, these manifolds provide non-trivial examples of some remarkable classes of contact Riemannian manifolds, like CR-integrable contact metric manifolds, H-contact manifolds  and harmonic contact
metric manifolds. It is worth noting that there are non-trivial examples of such Riemannian
manifolds, the most important being the tangent sphere bundle of any Riemannian
manifold of constant sectional curvature with its standard contact metric structure. Finally, in some cases, the formula \eqref{kappamunu} determines the curvature tensor field completely, which will be written in terms of some of the tensors $R_1,\ldots,R_8$.

This paper is organised in two additional sections. In the first one  we present some background which is necessary in order to follow this work. In the second one we study the  $(\ka,\mu,\nu)$-spaces with almost cosymplectic and almost Kenmotsu structures, giving explicitly the writing of their curvature tensors. This will lead to the definition of \emph{g.$(\ka,\mu,\nu)$-s.f. with divided $R_5$}, of which we will provide examples or obstruction results in all possible cases.

\section{Preliminaries}\label{section-preliminaries}

In this section, we recall some general definitions and basic
formulas which will be used later. For more background on almost
contact metric manifolds, we recommend the reference
\cite{blairb}.

An odd-dimensional Riemann manifold $(M,g)$ is said to be an
\emph{ almost contact metric manifold} if there exist on $M$ a $(1,1)$-tensor field $\phi $, a vector field $\xi $ (called the \emph{structure vector
field}) and a 1-form $\eta $ such that $\eta (\xi )=1$, $\phi
^{2}X=-X+\eta ( X) \xi $ and $g(\phi X,\phi Y)=g(
X,Y) -\eta ( X) \eta ( Y) $ for any
vector fields $X,Y$ on $M$. In particular, in an almost contact
metric manifold we also have $\phi \xi =0$ and $\eta \circ \phi
=0$.

Such a manifold is said to be a \emph{ contact metric manifold} if ${\rm d}%
\eta =\Phi $, where $\Phi ( X,Y) =g(X,\phi Y)$ is the \emph{fundamental $2$-form} of $M$.

On the other hand, the almost contact metric structure of $M$ is
said to be \emph{ normal} if the Nijenhuis torsion $[\phi ,\phi ]$\
of $\phi $ equals $-2{\rm d}\eta \otimes \xi $. A normal contact
metric manifold is called a \emph{ Sasakian manifold}. It can be
proved that an almost contact metric manifold is Sasakian if and
only if
\begin{equation}
(\nabla _{X}\phi )Y=g( X,Y) \xi -\eta ( Y) X
\label{eq-Sas}
\end{equation}%
for any vector fields $X,Y$ on $M$. Moreover, for a Sasakian
manifold the following equation holds:
\[ R( X,Y) \xi =\eta ( Y) X-\eta ( X) Y.\]

Given an almost contact metric manifold $(M,\phi ,\xi ,\eta ,g)$, a $%
\phi $-\emph{section} of $M$ at $p\in M$ is a section $\Pi
\subseteq
T_{p}M $ spanned by a unit vector $X_{p}$ orthogonal to $\xi _{p}$, and $%
\phi_p X_{p}$. The $\phi $-\emph{sectional curvature of} $\Pi $ is
defined by $K(X,\phi X)=R(X,\phi X,\phi X,X)$. A Sasakian manifold
with constant $\phi $-sectional curvature $c$ is called a \emph{
Sasakian space form}. In such a case, its Riemann curvature
tensor is given by equation $R=f_1R_1+f_2R_2+f_3R_3$ with
functions $f_{1} = (c+3)/4$, $f_{2} = f_{3}= (c-1)/4$ and $R_1, R_2$ and $R_3$ the tensors defined in \eqref{R1-R6}.

It is well known that on a contact metric manifold $( M,\phi
,\xi ,\eta ,g) $, the tensor $h$, defined by $2h=L_\xi
\phi$, satisfies the following relations \cite{bisr}:
\begin{equation}
h\xi =0,\quad \nabla _{X}\xi =-\phi X-\phi hX,\quad h\phi =-\phi
h, \quad {\rm tr}h=0,\quad \eta \circ h=0. \label{eq-cont-h}
\end{equation}%
Therefore, it follows that a contact metric manifold is $K$-contact if
and only if $h=0$.

An almost contact metric manifold is said to be \emph{almost cosymplectic}  if $d \eta=0$ and $d \Phi=0$. A normal almost cosymplectic manifold is cosymplectic.  We will say that an almost contact metric manifold is \emph{almost Kenmotsu} if $d \eta=0$ and $d \Phi=2 \eta \wedge \Phi$. A normal almost Kenmotsu manifold is Kenmotsu. In \cite{kim}, T. W. Kim and H. K. Pak defined the notion of \emph{almost $\alpha$-cosymplectic} manifold as such an almost contact metric manifold satisfying $d \eta =0$ and $d \Phi=2 \alpha \eta \wedge \Phi$. These manifolds include trivially the almost cosymplectic   ($\alpha=0$) and almost Kenmotsu ones ($\alpha=1$). A normal almost $\alpha$-cosymplectic manifold is $\alpha$-cosymplectic.

Similar formulas to the ones we had in the contact metric case also hold on $\alpha$-cosymplectic manifolds, where we know that $h$ is a symmetric operator satisfying that \cite{kim}:
\begin{equation}\label{nablaxi}
h  \xi=0, \quad \nabla _X \xi = -\alpha \phi X- \phi h X, \quad h\phi =-\phi h,  \quad tr h=0.
\end{equation}

These results have also been proved when $\alpha=0$ (almost cosymplectic manifolds) in \cite{endo94} and when $\alpha=1$ (almost Kenmotsu manifolds) in \cite{dileo07}.

\section{Almost cosymplectic or almost Kenmotsu generalized $(\ka,\mu,\nu)$-space forms}

In this section  we will study how $(\ka,\mu,\nu)$-spaces behave when they have almost cosymplectic or almost Kenmotsu structures. Firstly, we will present some results from \cite{murathan-arxiv} which are true for $\alpha$-cosymplectic manifolds and therefore on almost cosymplectic (when $\alpha=0$) or almost Kenmotsu ones (when $\alpha=1$). Later, we will study both structures separately as we will use different approaches in order to obtain the writing of the curvature tensor of a $(\kappa,\mu,\nu)$-space.

\begin{proposition}[\cite{murathan-arxiv}]\label{prop-basica}
Given $M$ an almost $\alpha$-cosymplectic $(\ka,\mu,\nu)$-space, then
\begin{equation}\label{h-phi}
h^2=(\ka+\alpha^2) \phi^2,
\end{equation}
hence  $\ka \leq -\alpha^2 $ and  $\ka=-\alpha^2$  if and only if $h=0$. Moreover, the next formulas are satisfied
\begin{equation}
\xi(\ka)=2 ( \ka+\alpha^2)(\nu-2\alpha),  \label{xi-ka}
\end{equation}
\begin{equation}\label{eq-R-xi-x-y}
\begin{aligned}
R(\xi,X)Y&=\ka (g(X,Y)\xi-\eta(Y)X) + \mu (g (h X,Y)\xi-\eta(Y)h X)\\
&+\nu (g(\phi h X,Y)\xi-\eta(Y)\phi h X),
\end{aligned}
\end{equation}
\begin{equation}\label{nabla-phi-h}
\begin{aligned}
(\nabla_Y \phi h)X-(\nabla_X \phi h)Y&=(\ka+\alpha^2) (\eta(Y)X-\eta(X)Y)\\
&+\mu(\eta(Y)h X-\eta(X)h Y) +(\nu-\alpha)(\eta(Y)\phi h X-\eta(X)\phi h Y),
\end{aligned}
\end{equation}
\begin{align}
(\nabla_X \phi)Y&=g(\alpha \phi X+h X,Y)\xi-\eta(Y)(\alpha \phi X+ h X), \label{nabla-phi}\\
(\nabla_Y  h)X-(\nabla_X  h)Y&=(\ka+\alpha^2) (\eta(X)\phi Y-\eta(Y)\phi X+2g( X,\phi Y)\xi), \label{nabla-h}\\
&+\mu(\eta(X)\phi h Y-\eta(Y)\phi h X)  +(\alpha-\nu)(\eta(X) h Y-\eta(Y) h X), \nonumber
\end{align}
for any $X,Y$ vector fields on $M$.
\end{proposition}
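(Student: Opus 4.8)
The plan is to establish each of the six formulas \eqref{h-phi}--\eqref{nabla-h} in a logical order, bootstrapping from the structural identities \eqref{nablaxi} and the defining condition \eqref{kappamunu}. First I would prove \eqref{h-phi}. The standard route is to compute $R(X,\xi)\xi$ directly from the curvature definition $R(X,\xi)\xi = \nabla_X\nabla_\xi\xi - \nabla_\xi\nabla_X\xi - \nabla_{[X,\xi]}\xi$, using $\nabla_Z\xi = -\alpha\phi Z - \phi h Z$ from \eqref{nablaxi} together with $h\xi = 0$ and $\nabla_\xi\xi = 0$. Expanding this produces terms involving $\alpha^2\phi^2 X$, $\phi h\phi h X$, and derivative terms $\alpha(\nabla_\xi\phi)X$, $(\nabla_\xi\phi h)X$, etc.; comparing with the value of $R(X,\xi)\xi$ forced by \eqref{kappamunu} (namely $-\kappa(X-\eta(X)\xi) - \mu hX - \nu\phi hX$ for $X$ orthogonal to $\xi$) and taking the part symmetric in the appropriate sense isolates $h^2 = (\kappa+\alpha^2)\phi^2$. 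Since $h$ is symmetric, $h^2$ is positive semidefinite on the contact distribution, which forces $\kappa + \alpha^2 \le 0$, and $h^2 = 0 \iff h = 0$ gives the equivalence $\kappa = -\alpha^2 \iff h = 0$.

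Next I would derive \eqref{eq-R-xi-x-y}: this is essentially a restatement of \eqref{kappamunu} using the first Bianchi-type symmetry $R(\xi,X)Y = -R(X,\xi)Y$ combined with the skew-symmetry $g(R(\xi,X)Y,Z) = g(R(Y,Z)\xi,X)$ wait—more directly, one uses $g(R(\xi,X)Y,Z) = -g(R(\xi,X)Z,Y)$ and the known expression for $R(X,Y)\xi$ together with the metric compatibility of $R$ to transfer indices; I would contract \eqref{kappamunu} appropriately and use that $h$, $\phi h$ are (respectively) symmetric and the relation $g(\phi h X, Y) = -g(X,\phi h Y)$ is antisymmetric, so that the $\xi$-component and the tangential component separate as displayed. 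For \eqref{nabla-phi}, I would use the general fact that on any almost $\alpha$-cosymplectic manifold $(\nabla_X\phi)Y$ is determined by $d\Phi$, $d\eta$ and the normality defect; here $d\eta = 0$ and $d\Phi = 2\alpha\,\eta\wedge\Phi$, and a standard computation (see \cite{kim}) yields exactly $(\nabla_X\phi)Y = g(\alpha\phi X + hX, Y)\xi - \eta(Y)(\alpha\phi X + hX)$. In fact this is the definition/characterization of $h$ in this setting, so I would either cite it or rederive it by polarizing $d\Phi$.

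The remaining three identities \eqref{xi-ka}, \eqref{nabla-phi-h} and \eqref{nabla-h} I would obtain by differentiating the earlier ones. For \eqref{nabla-h}: apply $\nabla_Z$ to the defining equation $g(h^2 X, Y) = (\kappa+\alpha^2) g(\phi^2 X, Y)$, or more efficiently compute $(\nabla_Y h)X - (\nabla_X h)Y$ by using the second Bianchi identity applied to $R(\cdot,\cdot)\xi$ in the form \eqref{kappamunu}: since $\nabla_X\xi$ is expressed via $\phi$ and $h$, covariantly differentiating \eqref{kappamunu} and cyclically summing, then using \eqref{nabla-phi} to handle the $\nabla\phi$ terms, yields a linear relation from which $(\nabla_Y h)X - (\nabla_X h)Y$ is read off; the $(\kappa+\alpha^2)(\eta(X)\phi Y - \eta(Y)\phi X + 2g(X,\phi Y)\xi)$ piece comes from the $\kappa$-term, the $\mu$ and $\nu$ pieces from those respective terms. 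Formula \eqref{nabla-phi-h} follows from \eqref{nabla-h} by writing $\nabla(\phi h) = (\nabla\phi)h + \phi(\nabla h)$ and substituting both \eqref{nabla-phi} and \eqref{nabla-h}, using $h\phi = -\phi h$ and $h\xi = 0$ to simplify. Finally \eqref{xi-ka} comes from setting $X = Y = $ a suitable vector in one of the differentiated identities, or most cleanly by taking the $\nabla_\xi$ derivative of $h^2 = (\kappa+\alpha^2)\phi^2$ and comparing with the $\nabla_\xi h$ information extracted from \eqref{nabla-h} (with one argument equal to $\xi$), which produces $\xi(\kappa)\phi^2 = 2(\kappa+\alpha^2)(\nu - 2\alpha)\phi^2$ and hence the scalar identity.

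The main obstacle will be the bookkeeping in the covariant-differentiation steps for \eqref{nabla-h}: one must carefully track which terms are $\eta$-multiples (and thus vanish on the contact distribution versus survive in the $\xi$-direction), correctly apply the second Bianchi identity with the non-parallel $\xi$, and consistently use the sign conventions for $h\phi = -\phi h$ and the (anti)symmetry of $h$ and $\phi h$. A secondary subtlety is that several of these identities are stated as operator identities valid for \emph{all} $X, Y$, so after deriving them on the contact distribution one must separately verify the cases where an argument is $\xi$ (where most terms collapse because $h\xi = 0$, $\phi\xi = 0$) — this is routine but must be checked to claim the full statements. Since the proposition is quoted from \cite{murathan-arxiv}, an acceptable shorter route is simply to cite that reference for \eqref{h-phi}--\eqref{nabla-h}, remarking that all formulas are the specializations to $\alpha$-cosymplectic structures of computations carried out there, and that they reduce to the known almost cosymplectic ($\alpha = 0$, \cite{endo94}) and almost Kenmotsu ($\alpha = 1$, \cite{dileo07}) cases.
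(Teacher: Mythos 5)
The paper itself offers no proof of this statement: Proposition \ref{prop-basica} is imported wholesale from \cite{murathan-arxiv}, so the ``shorter route'' you mention at the end (cite the reference) is exactly what the authors do. Your sketched derivation has the right overall architecture --- use $\nabla\xi$ from \eqref{nablaxi} to compute $R(\cdot,\xi)\xi$ and $R(\cdot,\cdot)\xi$ from the definition of the curvature tensor and compare with \eqref{kappamunu} --- and your route to \eqref{h-phi} and to \eqref{xi-ka} (differentiate $h^2=(\ka+\alpha^2)\phi^2$ along $\xi$ and feed in $\nabla_\xi h$) does work. But two points need repair. First, $\phi h$ is \emph{symmetric}, not antisymmetric: since $h$ is symmetric, $\phi$ is skew and $h\phi=-\phi h$, one has $g(\phi hX,Y)=-g(hX,\phi Y)=-g(X,h\phi Y)=g(X,\phi hY)$. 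Your derivation of \eqref{eq-R-xi-x-y} from the pair symmetry $g(R(\xi,X)Y,Z)=g(R(Y,Z)\xi,X)$ is the right idea, but with ``$\phi h$ antisymmetric'' the $\nu$-term comes out with the opposite sign; the stated formula needs the symmetry of both $h$ and $\phi h$.

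The genuine gap is \eqref{nabla-phi}. It does \emph{not} follow from $d\eta=0$ and $d\Phi=2\alpha\eta\wedge\Phi$ by ``a standard computation'': the general expression for $2g((\nabla_X\phi)Y,Z)$ contains the Nijenhuis term $g(N^{(1)}(Y,Z),\phi X)$, which on a non-normal structure is not determined by $h$. Indeed, for $X,Y,Z$ orthogonal to $\xi$ formula \eqref{nabla-phi} forces the integral leaves of $\ker\eta$ to be K\"ahler (equivalently, CR-integrability of the structure), whereas a generic almost cosymplectic or almost Kenmotsu manifold has strictly almost K\"ahler leaves. So \eqref{nabla-phi} is a \emph{consequence of the nullity condition} \eqref{kappamunu} --- this is precisely the K\"ahler-leaves/CR-integrability theorem of Dacko--Olszak and Dileo--Pastore for these spaces --- and your plan supplies no argument for it; since you then use \eqref{nabla-phi} to get \eqref{nabla-phi-h} and \eqref{nabla-h}, the gap propagates. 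Two smaller remarks: the second Bianchi identity is the wrong tool for \eqref{nabla-phi-h}/\eqref{nabla-h} --- all that is needed is $R(X,Y)\xi=\nabla_X\nabla_Y\xi-\nabla_Y\nabla_X\xi-\nabla_{[X,Y]}\xi$ with $\nabla\xi$ substituted, which yields \eqref{nabla-phi-h} directly and then \eqref{nabla-h} via \eqref{nabla-phi}; and your closing computation of \eqref{xi-ka} produces the stated factor $\nu-2\alpha$ only with $\nabla_X\xi=-\alpha\phi^2X-\phi hX$ (the version of \eqref{nablaxi} printed in the paper, with $\phi X$ in place of $\phi^2X$, would give $\nu-\alpha$), so be explicit about which formula you are using.
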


\begin{theorem}[\cite{murathan-arxiv}]\label{teo-xi}
On an almost $\alpha$-cosymplectic $(\ka,\mu,\nu)$-space of dimension greater than or equal to $5$, the functions $\kappa$,  $\mu$ and $\nu$ only vary in the direction of $\xi$, i.e. $X(\ka)=X(\mu)=X(\nu)=0$ for every vector field $X$ orthogonal to $\xi$.
\end{theorem}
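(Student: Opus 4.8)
The plan is to differentiate the defining relation \eqref{kappamunu} together with the structural identities of Proposition \ref{prop-basica}, and then exploit the fact that in dimension $\geq 5$ the eigenspaces of $h$ are at least $2$-dimensional, which gives enough room to kill off all directions orthogonal to $\xi$. Concretely, I would first dispose of $\kappa$: since $h^2=(\kappa+\alpha^2)\phi^2$ by \eqref{h-phi}, the function $\kappa+\alpha^2$ is (up to sign) an eigenvalue of $h^2$, so taking the covariant derivative of \eqref{h-phi} and pairing suitably with an $h$-eigenvector already shows that $X(\kappa)$ is controlled by $(\nabla_X h)$-terms; combined with \eqref{nabla-h} and \eqref{xi-ka} this should pin down $X(\kappa)=0$ for $X\perp\xi$. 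The case $\kappa=-\alpha^2$ (equivalently $h=0$) is trivial, so assume $h\neq 0$ and let $\lambda>0$ be the positive eigenvalue of $h$ with eigendistribution $[\lambda]$, so that $\phi[\lambda]=[-\lambda]$ and both have dimension $\geq 2$ when $\dim M\geq 5$.

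Next, for $\mu$ and $\nu$ I would use the second Bianchi identity, or more directly differentiate \eqref{eq-R-xi-x-y} (equivalently \eqref{kappamunu}) and contract. Apply $\nabla_Z$ to $R(X,Y)\xi=\kappa\{\eta(Y)X-\eta(X)Y\}+\mu\{\eta(Y)hX-\eta(X)hY\}+\nu\{\eta(Y)\phi hX-\eta(X)\phi hY\}$, use $\nabla_Z\xi=-\alpha\phi Z-\phi hZ$ from \eqref{nablaxi}, the formulas \eqref{nabla-phi}, \eqref{nabla-h}, \eqref{nabla-phi-h} for the derivatives of $\phi$, $h$, $\phi h$, and the already-established $Z(\kappa)=0$. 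Choosing $X,Y,Z$ among eigenvectors of $h$ orthogonal to $\xi$ and to each other (possible precisely because the eigenspaces have dimension $\geq 2$), most terms either vanish or are proportional to $\xi$; equating the $\xi$-components and the tangential components on both sides should leave linear relations forcing $Z(\mu)g(hX,Y)$ and $Z(\nu)g(\phi hX,Y)$-type expressions to vanish, hence $Z(\mu)=Z(\nu)=0$ for $Z\perp\xi$. One has to be a little careful to first check that $\lambda$ itself satisfies $Z(\lambda)=0$ for $Z\perp\xi$; this follows from $Z(\kappa)=0$ together with $\lambda^2=\kappa+\alpha^2$ once $h\neq0$.

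The main obstacle, and the point requiring the dimension hypothesis, is bookkeeping the interaction between the $[\lambda]$ and $[-\lambda]$ eigendistributions: the derivatives $\nabla_Z X$ of an eigenvector need not stay in the eigendistribution, so one must either work with the torsion-type tensors $(\nabla_X h)$ directly (which is why the identities \eqref{nabla-h}, \eqref{nabla-phi-h} are stated) rather than with components, or show that the relevant $O(n)$-connection terms cancel by symmetry. In dimension $3$ there is exactly one eigenvector (up to scale) in each of $[\lambda]$, $[-\lambda]$, so no vector orthogonal to $\xi$ is orthogonal to a given eigenvector, and the argument genuinely breaks down — consistent with the known $3$-dimensional examples mentioned in the introduction. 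I expect the cleanest route is: (i) establish $Z(\kappa)=0$, hence $Z(\lambda)=0$, from \eqref{h-phi} and \eqref{nabla-h}; (ii) plug into the differentiated Jacobi-type identity coming from \eqref{eq-R-xi-x-y} and \eqref{nabla-phi-h}; (iii) read off $Z(\mu)=Z(\nu)=0$ by testing against a second eigenvector in the same eigenspace, which exists only when $\dim M\geq 5$.
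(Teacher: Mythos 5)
First, a point of comparison: the paper does not prove Theorem~\ref{teo-xi} at all --- it is imported verbatim from \cite{murathan-arxiv} --- so there is no in-paper argument to measure yours against, and your proposal must stand on its own. As written it is a strategy outline rather than a proof, and its first step has a concrete gap. You propose to obtain $X(\kappa)=0$ for $X\perp\xi$ by covariantly differentiating $h^2=(\kappa+\alpha^2)\phi^2$ and controlling the resulting terms via \eqref{nabla-h}. But \eqref{nabla-h} and \eqref{nabla-phi-h} only give the \emph{antisymmetrised} combinations $(\nabla_Y h)X-(\nabla_X h)Y$ and $(\nabla_Y\phi h)X-(\nabla_X\phi h)Y$; they do not determine $\nabla_Z h$ for a single direction $Z$ (for a $3$-tensor $g((\nabla_Z h)X,Y)$ symmetric in $X,Y$, knowledge of its antisymmetrisation in the first two slots leaves a one-parameter ambiguity, unlike the Koszul situation). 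Worse, if you differentiate \eqref{h-phi} and pair against a unit $h$-eigenvector $X$ with $hX=\lambda X$, the left side gives $Z(\lambda^2)=-Z(\kappa)$ and the right side also gives $-Z(\kappa)$ (the $\nabla\phi$ contributions vanish by \eqref{nabla-phi}), so the identity is a tautology and yields no information about $Z(\kappa)$. Your step (i) therefore does not get off the ground with the ingredients you cite; the actual proof in \cite{murathan-arxiv} (modelled on Koufogiorgos--Tsichlias for generalized $(\kappa,\mu)$-spaces) runs the second Bianchi identity on \eqref{kappamunu}, computing $(\nabla_Z R)(X,Y)\xi$ using only $\nabla\xi$ from \eqref{nablaxi} and the antisymmetrised identities, and extracts $Z(\kappa)$, $Z(\mu)$, $Z(\nu)$ \emph{simultaneously} from the resulting linear system.

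Second, even granting the first step, the remainder of your argument is asserted rather than derived: ``most terms either vanish or are proportional to $\xi$'' and the relations ``should'' force $Z(\mu)=Z(\nu)=0$. That is exactly where the work lies. When you differentiate \eqref{kappamunu}, the terms $R(\nabla_ZX,Y)\xi$ and $R(X,\nabla_ZY)\xi$ reintroduce $\kappa$, $\mu$, $\nu$ multiplied by quantities like $\eta(\nabla_ZX)=-g(X,\nabla_Z\xi)$, and these do not vanish termwise; they cancel only after taking the cyclic Bianchi sum, and one must then verify that the surviving linear system in $Z(\kappa),Z(\mu),Z(\nu)$ is nondegenerate --- which is precisely where $n\geq2$ enters, through the existence of two orthonormal vectors in the same $h$-eigenspace. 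Your identification of the role of the dimension hypothesis and your explanation of why the argument breaks down in dimension $3$ are both correct, and the overall plan (a Bianchi-type differential identity on the curvature condition plus eigenspace multiplicity) is the right one; but the proposal as it stands does not constitute a proof, and its proposed entry point for $Z(\kappa)=0$ does not work.
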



We will now focus  on the almost cosymplectic  $(\ka,\mu,\nu)$-spaces, which have already been  studied by other authors in some particular cases. For $\mu=\nu=0$, P. Dacko published   \cite{dacko2000}, where he proved that $\ka$ must be constant, and H. Endo presented multiple results in   \cite{endo94} and \cite{endo96}. This last author also examined in  \cite{endo97} and \cite{endo2002} the $(\ka,\mu,\nu)$-spaces with constant $\ka$,  $\mu$ and  $\nu=0$. Later, P. Dacko and Z. Olszak studied in \cite{dacko2005} and \cite{dacko2005-b} the almost cosymplectic $(\ka,\mu,\nu)$-spaces with $\ka,\mu$ and $\nu$ functions that only vary on the direction of the vector  field $\xi$.

We now present a result that is valid for any functions $\ka,\mu$ and $\nu$.

\begin{proposition}\label{prop-basica-cos}
Let $M^{2n+1}$ be an almost cosymplectic $(\ka,\mu,\nu)$-space. Then
\begin{align}
\nabla_\xi \phi h=&\mu h+\nu \phi h, \label{nabla-xi-phi-h-cos}
\end{align} for every $X,Y$ differentiable vector fields on $M$.

If $\ka=0$, then $h=0$ and $M$ is a cosymplectic manifold if its dimension is $3$.

If $\ka <0$, the eigenvalues of $h$ are $0$ (with multiplicity $1$) and $\pm \la=\pm \sqrt{-\ka}$ (each one with multiplicity $n$). Moreover,  $\mu=-2g(\nabla_\xi X,\phi X)$ holds for every $X$ eigenvector of $h$ associated to the eigenvalue $\la$ or $-\la$.
\end{proposition}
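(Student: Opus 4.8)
The plan is to treat the three assertions separately, in each case substituting the $\alpha=0$ specialisation of the structure equations of Proposition~\ref{prop-basica} into a short computation. For \eqref{nabla-xi-phi-h-cos} I would put $Y=\xi$ in \eqref{nabla-phi-h}. By \eqref{nablaxi} with $\alpha=0$ one has $(\nabla_X\phi h)\xi=\nabla_X\big((\phi h)\xi\big)-(\phi h)\nabla_X\xi=-(\phi h)(-\phi hX)=\phi h\phi hX$, and since $h\phi h=-\phi h^2=-\kappa\phi^3=\kappa\phi$ by \eqref{h-phi} and $\phi^3=-\phi$, this gives $(\nabla_X\phi h)\xi=\kappa\phi^2X$. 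On the right-hand side of \eqref{nabla-phi-h} with $\alpha=0$, $Y=\xi$ one gets $\kappa(X-\eta(X)\xi)+\mu hX+\nu\phi hX=-\kappa\phi^2X+\mu hX+\nu\phi hX$, so the two copies of $\kappa\phi^2X$ cancel and \eqref{nabla-xi-phi-h-cos} drops out. This step is routine; the only care needed is the simplification of $\phi h\phi hX$.

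For the case $\kappa=0$: by \eqref{h-phi} we get $h^2=0$, and $h$ being symmetric this forces $h=0$; then \eqref{nablaxi} with $\alpha=0$ gives $\nabla\xi=0$. Thus $\xi$ is a parallel unit field, $\ker\eta$ is integrable (because $d\eta=0$) and totally geodesic (because $\nabla\xi=0$), so $M$ splits locally as a Riemannian product $\mathbb{R}\times N$ with $N$ almost K\"ahler; when $\dim M=3$ the fibre $N$ is a surface, hence its almost complex structure is integrable and its (automatically closed) fundamental form makes it K\"ahler, so $M$ is cosymplectic. (One may instead quote the known fact that a $3$-dimensional almost cosymplectic manifold is cosymplectic if and only if $h=0$.)

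For the case $\kappa<0$ the spectral statement is linear algebra: $h$ is symmetric, $h\xi=0$, and on the $2n$-dimensional distribution $\ker\eta$, where $\phi^2=-\mathrm{id}$, equation \eqref{h-phi} reads $h^2X=-\kappa X$ with $-\kappa>0$; hence the eigenvalues of $h$ are $0$ on $\mathrm{span}\{\xi\}$ and $\pm\lambda$, $\lambda=\sqrt{-\kappa}$, on $\ker\eta$, the eigenvalue $0$ having multiplicity exactly $1$ since $-\kappa\ne0$. Because $h\phi=-\phi h$ and $\phi$ is an isomorphism of $\ker\eta$, it maps the $\lambda$-eigenspace onto the $(-\lambda)$-eigenspace, so these have equal dimension $n$. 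The eigendistributions are smooth (their orthogonal projectors are expressions in $h$ and the smooth function $\lambda$), so I would take a local unit section $X$ with $hX=\lambda X$, evaluate \eqref{nabla-xi-phi-h-cos} at $X$, and pair with $X$. The right-hand side gives $g(\mu hX+\nu\phi hX,X)=\mu\lambda$, the $\nu$-term dying because $g(\phi X,X)=0$; on the left, writing $(\nabla_\xi\phi h)X=\nabla_\xi(\lambda\phi X)-\phi h\nabla_\xi X$ and using metric compatibility (with $g(\phi X,X)\equiv0$), skew-symmetry of $\phi$, symmetry of $h$, and $h\phi=-\phi h$, one obtains $g\big((\nabla_\xi\phi h)X,X\big)=-2\lambda\,g(\nabla_\xi X,\phi X)$. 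Since $\lambda\ne0$ this yields $\mu=-2g(\nabla_\xi X,\phi X)$, and the case $hX=-\lambda X$ is handled identically. Note that $\xi(\lambda)$ only multiplies $g(\phi X,X)=0$ and so never enters, which is why \eqref{xi-ka} is not needed here (it would be required only for the analogous identity involving $\nu$).

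The computations are all short, so the real difficulty is bookkeeping: keeping track of which summands lie in $\mathrm{span}\{\xi\}$ and which in the $\pm\lambda$-eigenspaces of $h$, and remembering that $\lambda$ — hence the chosen eigenvector field $X$ — need not be constant along $\xi$. The one genuinely non-mechanical point I anticipate is the dimension-$3$ reduction in the second part, where one must use precisely that the fibre of the local splitting is a surface, so that "almost K\"ahler" upgrades to "K\"ahler"; in higher dimension this step fails, which is exactly why the cosymplectic conclusion is stated only for $\dim M=3$.
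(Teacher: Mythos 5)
Your proposal is correct and follows essentially the same route as the paper: put $Y=\xi$ in \eqref{nabla-phi-h}, compute $(\nabla_X\phi h)\xi=\phi h\phi hX=\kappa\phi^2X$ to isolate \eqref{nabla-xi-phi-h-cos}, and then pair that identity against a unit eigenvector $X$ of $h$ to extract $\mu=-2g(\nabla_\xi X,\phi X)$. The only divergence is the dimension-$3$ step for $\kappa=0$, where the paper simply invokes Olszak's criterion (a $3$-dimensional almost contact metric manifold with $\nabla\xi=0$ is cosymplectic) while you reprove it via the local de Rham splitting and the fact that an almost K\"ahler surface is K\"ahler -- both are fine, and your parenthetical already notes the citation shortcut.
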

\begin{proof}
If $\ka=0$, then $h=0$ and, by virtue of \eqref{nablaxi}, we obtain that $\nabla \xi=0$. When $M$ is of dimension $3$, it is enough to apply Corollary 5.6. of \cite{olszak81}, which says that an almost contact metric manifold of dimension $3$ is cosymplectic if and only if $\nabla \xi=0$.

Choosing $Y=\xi$ in \eqref{nabla-phi-h} (with $\alpha=0$), we deduce that
\[(\nabla_\xi \phi h)X-(\nabla_X \phi h)\xi=-\ka \phi^2 X+\mu h X+\nu \phi h X.\]

On the other hand, using \eqref{nablaxi} and \eqref{h-phi}, then
\[(\nabla_X \phi h)\xi=\nabla_X \phi h \xi-\phi h \nabla_X \xi=\phi h \phi h X=-\phi^2 h^2 X=h^2 X=\ka \phi^2 X,\]
and substituting in the last equation we obtain \eqref{nabla-xi-phi-h-cos}.

If  $\ka<0$, by the properties of $\phi$ and of $h$ we know that the eigenvalues of $h$ are $0$ (with multiplicity $1$) and $\pm \la=\pm \sqrt{-\ka} \neq 0$ (each one with multiplicity $n$). Let us take a unit vector field $X$, eigenvector of $h$ associated to the eigenvalue $\la=\sqrt{-\ka} $ (denoted by $X \in D(\la)$), which is orthogonal to $\xi$ and satisfies by \eqref{nabla-xi-phi-h-cos}  that
\begin{equation} \label{unacualquiera}
(\nabla_\xi \phi h)X=\mu hX+\nu \phi h X=\la \mu X+\la \nu \phi X.
\end{equation}

Taking the inner product of \eqref{unacualquiera} with $\phi X$  gives 
\begin{equation*}
\begin{aligned}
\la \nu 
&=\xi(\la)+\la g( \nabla_\xi \phi X,\phi X)-g(\phi h  \nabla_\xi X,\phi  X)=\xi(\la)=-\fra{1}{2 \la} \xi(\ka),
\end{aligned}
\end{equation*} from where \eqref{xi-ka} follows with $\alpha=0$.

Taking now the product of \eqref{unacualquiera} with $X$ gives 
\begin{equation*}
\begin{aligned}
\la \mu 
&=\la g( \nabla_\xi \phi X,X)-g( \nabla_\xi X,\phi h X)=-2 \la  g(\nabla_\xi X,\phi X).
\end{aligned}
\end{equation*}
Moreover, we know by hypothesis that $\la \neq 0$, so $\mu=-2g(\nabla_\xi X,\phi X)$ for every unit $X \in D(\la)$.

If we take $X$ a unit eigenvector of $h$ associated to the eigenvalue $-\la$, we get again the same two equations.
\end{proof}


By Theorem \ref{teo-xi}, if an almost cosymplectic $(\ka,\mu,\nu)$-space is of dimension greater than or equal to  $5$, the functions $\ka$,$\mu$ and $\nu$ only vary in the direction of $\xi$, hence we can use the results of \cite{dacko2005} and \cite{dacko2005-b}, some of which  will be summarised below. The case  of dimension $3$ will be studied apart later.

\begin{proposition}[\cite{dacko2005}]\label{prop-casi-c}
Let  $M^{2n+1}$ be an almost cosymplectic $(\ka,\mu,\nu)$-space, where $\ka,\mu,\nu$ only vary in the direction of  $\xi$.

If  $\ka=0$ on some point of $M$, then $\ka$ is the identically zero function on  $M$ and $h=0$, so $R(X,Y)\xi=0$ for every  $X,Y$ on $M$. Moreover, $M$ is locally the product of an open interval and an almost K\"{a}hler manifold.

If $\ka <0$, then the eigenvalues of $h$ are $0$ (with multiplicity $1$) and $\pm \sqrt{-\ka}$ (each one with multiplicity $n$). 
In particular, $\ka$ is constant if and only if $\nu=0$.
\end{proposition}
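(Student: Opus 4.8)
The plan is to exploit the structure theory already established for almost cosymplectic $(\ka,\mu,\nu)$-spaces together with the differential equation \eqref{xi-ka}, which in the present case ($\alpha=0$) reads $\xi(\ka)=2\ka(\nu-2\alpha)|_{\alpha=0}=2\ka\nu$. Since we are in the regime where $\ka,\mu,\nu$ vary only in the direction of $\xi$, a point $p$ where $\ka(p)=0$ immediately forces $\xi(\ka)=2\ka\nu$ to vanish at $p$ as well, and by a standard ODE uniqueness argument along the $\xi$-trajectory through $p$ (the function $\ka$ restricted to an integral curve of $\xi$ solves a linear homogeneous ODE $\ka'=2\nu\ka$ with zero initial data) one gets $\ka\equiv 0$ on that trajectory; since $\ka$ is otherwise constant on the leaves transverse to $\xi$ by Theorem \ref{teo-xi}, this propagates to $\ka\equiv 0$ on all of $M$. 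Then $\ka=-\alpha^2=0$ in \eqref{h-phi} gives $h^2=0$, and because $h$ is symmetric this yields $h=0$; plugging $h=0$ and $\ka=0$ into \eqref{kappamunu} gives $R(X,Y)\xi=0$.

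For the splitting statement, I would invoke the characterisation of almost cosymplectic manifolds with $\nabla\xi=0$: from \eqref{nablaxi} with $\alpha=0$ and $h=0$ we get $\nabla_X\xi=0$ for all $X$, so $\xi$ is a parallel unit vector field. A parallel vector field gives a local de Rham splitting $M\cong I\times N$ where $N$ carries the induced structure; one checks that $d\eta=0$ and $d\Phi=0$ together with $\nabla\xi=0$ force the induced almost Hermitian structure on $N$ to be almost Kähler (the fundamental $2$-form of $N$ is the pullback of $\Phi$, which is closed, and the induced almost complex structure is $\phi$ restricted to the distribution $\ker\eta$, which is parallel along $N$). This is essentially the argument behind Olszak's results cited earlier and behind \cite{dacko2005}; here I would cite \cite{dacko2005} directly for the local product structure rather than redo it, since the statement is attributed to that paper.

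For the case $\ka<0$: the eigenvalue statement for $h$ is immediate from Proposition \ref{prop-basica-cos}, which already establishes that on any almost cosymplectic $(\ka,\mu,\nu)$-space with $\ka<0$ the eigenvalues of $h$ are $0$ with multiplicity $1$ and $\pm\sqrt{-\ka}$ each with multiplicity $n$; the extra hypothesis that $\ka,\mu,\nu$ vary only along $\xi$ is not needed for that part, so it just gets quoted. The substantive new claim is the equivalence ``$\ka$ constant $\iff\nu=0$''. Here again I would use \eqref{xi-ka} with $\alpha=0$, namely $\xi(\ka)=2\ka\nu$. Since all the functions vary only in the $\xi$ direction, $\ka$ is constant on $M$ precisely when $\xi(\ka)=0$ identically, i.e. precisely when $2\ka\nu\equiv 0$; and since $\ka<0$ is nowhere zero, $2\ka\nu\equiv 0$ is equivalent to $\nu\equiv 0$. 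This gives both implications at once.

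The main obstacle I anticipate is not in the $\ka<0$ part (which is a one-line consequence of \eqref{xi-ka}) but in making the propagation argument for $\ka\equiv 0$ fully rigorous: one needs that $\{\ka=0\}$ is both open and closed, or equivalently that vanishing of $\ka$ at one point spreads. Closedness is automatic by continuity; openness requires combining the $\xi$-direction ODE uniqueness with Theorem \ref{teo-xi} (constancy in the orthogonal directions) to cover a full neighbourhood, and one should be slightly careful that Theorem \ref{teo-xi} only applies in dimension $\geq 5$ — in dimension $3$ the claim ``$\ka(p)=0$ at some point $\Rightarrow\ka\equiv 0$'' under the hypothesis that $\ka$ varies only along $\xi$ still follows from the ODE argument alone, since then $\ka$ is automatically constant on the cosymplectic leaves. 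Since this proposition is quoted from \cite{dacko2005}, the cleanest writeup is to give the short deductions from \eqref{h-phi} and \eqref{xi-ka} and refer to \cite{dacko2005} for the local product decomposition.
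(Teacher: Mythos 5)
This proposition is imported verbatim from \cite{dacko2005}: the paper supplies no proof of it, so there is no in-paper argument to compare yours against. Your blind reconstruction is correct and follows the standard route: $\xi(\ka)=2\ka\nu$ from \eqref{xi-ka} with $\alpha=0$, linear ODE uniqueness along integral curves of $\xi$ combined with constancy of $\ka$ on the leaves of $\ker\eta$ to make $\{\ka=0\}$ open and closed, $h^2=\ka\phi^2=0$ plus symmetry of $h$ to get $h=0$ and hence $R(X,Y)\xi=0$ from \eqref{kappamunu}, the parallel-$\xi$ splitting from \eqref{nablaxi} with the transverse factor almost K\"{a}hler because $d\Phi=0$, and the equivalence ``$\ka$ constant $\iff\nu=0$'' read off from $\xi(\ka)=2\ka\nu$ with $\ka$ nowhere zero. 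Two minor remarks: the constancy of $\ka$ in the directions orthogonal to $\xi$ is a hypothesis of the proposition itself, so you do not need to invoke Theorem \ref{teo-xi} at all (that theorem only serves to remove the hypothesis in dimension $\geq 5$); and the eigenvalue statement for $\ka<0$ is indeed already contained in Proposition \ref{prop-basica-cos}, which precedes this one, so quoting it introduces no circularity.
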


We will  consider a $D$-homothetic deformation of the almost contact metric structure $(\phi,\xi,\eta, g)$ defined as \cite{dacko2005}
\begin{equation} \label{transf-cos}
\overline{\phi}=\phi, \ \overline{\xi}=\fra{1}{\beta}\xi, \ \overline{\eta}=\beta \eta, \  \overline{g}=\alpha g+(\beta^2-\alpha) \eta \otimes \eta,
\end{equation} where $\alpha$ is a positive constant and  $\beta$ a function that only varies in the direction of $\xi$ and is not zero on any point of the manifold.

\begin{proposition}[\cite{dacko2005}]\label{prop-dacko2}
If $(M,\phi,\xi,\eta,g)$ is an almost cosymplectic manifold, the tensor $\overline{h}$ and the Levi-Civita connection $\overline{\nabla}$ of the deformed manifold are related to the original ones the following way:
\begin{gather}
\overline{h}=\fra{1}{\beta} h, \label{h-deformed}\\
\overline{\nabla}_X Y=\nabla_X Y-\fra{\beta^2-\alpha}{\beta^2} g(\phi h X,Y)\xi+ \fra{\xi(\beta)}{\beta} \eta(X) \eta(Y) \xi,  \label{nabla-deformed-cos}
\end{gather} for all $X,Y$ vector fields on $M$.

Hence the almost cosymplectic $(\ka,\mu,\nu)$-spaces with $\ka,\mu,\nu$  varying only in the direction of  $\xi$ are deformed in almost cosymplectic $(\overline{\ka},\overline{\mu},\overline{\nu})$-spaces with
\[\overline{\ka}=\fra{\ka}{\beta^2}, \ \overline{\mu}=\fra{\mu}{\beta}, \ \overline{\nu}=\fra{\nu \beta-\xi(\beta)}{\beta^2},\] where $\overline{\ka},\overline{\mu},\overline{\nu}$ only vary in the direction of $\overline{\xi}$.

Therefore, every almost cosymplectic $(\ka,\mu,\nu)$-spaces with $\ka<0$ can be deformed in an almost cosymplectic  $(-1,\overline{\mu},0)$-space with $\overline{\mu}=\mu / {\sqrt{-\ka}}$.
\end{proposition}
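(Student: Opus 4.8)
The plan is to establish the three assertions in turn, the core being the Koszul formula for $\overline{\nabla}$. First I would note that $(\overline{\phi},\overline{\xi},\overline{\eta},\overline{g})$ from \eqref{transf-cos} is again an almost contact metric structure: $\overline{\eta}(\overline{\xi})=1$, $\overline{\phi}^{2}=-\mathrm{Id}+\overline{\eta}\otimes\overline{\xi}$ (because $\overline{\eta}(X)\overline{\xi}=\eta(X)\xi$), and the compatibility of $\overline{g}$ with $\overline{\phi}$ all follow immediately from the corresponding identities for $(\phi,\xi,\eta,g)$ together with $\overline{\eta}\circ\phi=0$. It is almost cosymplectic because $\beta$ varies only in the direction of $\xi$, so $d\beta=\xi(\beta)\,\eta$ and $d\overline{\eta}=d\beta\wedge\eta+\beta\,d\eta=0$, while $\overline{\Phi}(X,Y)=\overline{g}(X,\phi Y)=\alpha\,g(X,\phi Y)=\alpha\Phi(X,Y)$ gives $d\overline{\Phi}=\alpha\,d\Phi=0$. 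Applying $d$ to $d\beta=\xi(\beta)\eta$ also yields $d(\xi(\beta))\wedge\eta=0$, so $\xi(\beta)$ varies only along $\xi$ as well, a fact needed below.

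To prove \eqref{nabla-deformed-cos} I would substitute $\overline{g}=\alpha g+(\beta^{2}-\alpha)\eta\otimes\eta$ into the Koszul formula. The $\alpha g$-part reproduces $2\alpha\,g(\nabla_{X}Y,Z)$; in the $(\beta^{2}-\alpha)\eta\otimes\eta$-part one uses $X(\beta^{2}-\alpha)=2\beta\,\xi(\beta)\,\eta(X)$ and $X(\eta(Y))=-g(\phi hX,Y)+\eta(\nabla_{X}Y)$, the latter from $\nabla_{X}\xi=-\phi hX$ (equation \eqref{nablaxi} with $\alpha=0$). After the terms $\eta(\nabla_{X}Y)$ cancel against the bracket terms, and one invokes the symmetry of $g(\phi hX,Y)$ in $X,Y$ (a consequence of $h$ being symmetric and $h\phi=-\phi h$), the remainder is precisely $2\overline{g}$ paired against $-\fra{\beta^{2}-\alpha}{\beta^{2}}g(\phi hX,Y)\xi+\fra{\xi(\beta)}{\beta}\eta(X)\eta(Y)\xi$, which is \eqref{nabla-deformed-cos}. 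Then \eqref{h-deformed} follows by differentiating $\overline{\xi}=\fra{1}{\beta}\xi$ with \eqref{nabla-deformed-cos}: since $g(\phi hX,\xi)=\eta(\phi hX)=0$ and $\nabla_{X}\xi=-\phi hX$, the two terms carrying $\xi(\beta)$ cancel and $\overline{\nabla}_{X}\overline{\xi}=-\fra{1}{\beta}\phi hX$; comparing this with $\overline{\nabla}_{X}\overline{\xi}=-\overline{\phi}\,\overline{h}X=-\phi\,\overline{h}X$ and using that $\phi$ is injective on $\ker\eta$, which contains both $hX$ and $\overline{h}X$, gives $\overline{h}=\fra{1}{\beta}h$.

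For the curvature I would use the difference-tensor identity $\overline{R}(X,Y)Z=R(X,Y)Z+(\nabla_{X}T)(Y,Z)-(\nabla_{Y}T)(X,Z)+T(X,T(Y,Z))-T(Y,T(X,Z))$, where $T(X,Y)=\overline{\nabla}_{X}Y-\nabla_{X}Y$ is the correction term of \eqref{nabla-deformed-cos}, and evaluate it at $Z=\xi$. Here $T(Y,\xi)=\fra{\xi(\beta)}{\beta}\eta(Y)\xi$, so $T(X,T(Y,\xi))$ is symmetric in $X,Y$ and drops out; in $(\nabla_{X}T)(Y,\xi)-(\nabla_{Y}T)(X,\xi)$ every contribution symmetric in $X,Y$ cancels — namely the $g(\phi hX,Y)\xi$ and $g(hX,hY)\xi$ terms, and the term involving $X(\xi(\beta)/\beta)$, which drops because $\xi(\beta)/\beta$ varies only along $\xi$ — leaving $\fra{\xi(\beta)}{\beta}(\eta(X)\phi hY-\eta(Y)\phi hX)$. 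Inserting $R(X,Y)\xi$ from \eqref{kappamunu}, this changes the $\nu$-coefficient to $\nu-\fra{\xi(\beta)}{\beta}$ while leaving the $\ka$- and $\mu$-coefficients unchanged. Since $\overline{R}(X,Y)\overline{\xi}=\fra{1}{\beta}\overline{R}(X,Y)\xi$ by $C^{\infty}$-linearity, rewriting $\eta(Y)=\fra{1}{\beta}\overline{\eta}(Y)$, $hX=\beta\,\overline{h}X$ and $\phi hX=\beta\,\overline{\phi}\,\overline{h}X$ puts $\overline{R}(X,Y)\overline{\xi}$ into the shape \eqref{kappamunu} for the barred structure, with $\overline{\ka}=\ka/\beta^{2}$, $\overline{\mu}=\mu/\beta$ and $\overline{\nu}=(\nu\beta-\xi(\beta))/\beta^{2}$; these vary only in the direction of $\overline{\xi}=\fra{1}{\beta}\xi$ because $\ka,\mu,\nu,\beta,\xi(\beta)$ all vary only along $\xi$.

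The last claim is then immediate: take $\alpha$ any positive constant and $\beta=\sqrt{-\ka}$, which is admissible since $\ka$ is negative throughout $M$ and $\ka$, hence $\beta$, varies only along $\xi$ (Theorem \ref{teo-xi}). Then $\overline{\ka}=\ka/(-\ka)=-1$ and $\overline{\mu}=\mu/\sqrt{-\ka}$, while \eqref{xi-ka} with $\alpha=0$, i.e.\ $\xi(\ka)=2\ka\nu$, gives $\xi(\beta)=-\fra{\xi(\ka)}{2\sqrt{-\ka}}=\nu\sqrt{-\ka}=\nu\beta$, so that $\overline{\nu}=(\nu\beta-\nu\beta)/\beta^{2}=0$. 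I expect the main obstacle to be the curvature step: tracking which of the numerous terms of $(\nabla_{X}T)(Y,\xi)$ are symmetric in $X,Y$ (and therefore vanish) and handling the derivatives of $\beta$ and $\xi(\beta)$ consistently via the hypothesis that they are constant off the $\xi$-direction; the Koszul verification of \eqref{nabla-deformed-cos} is longer but purely mechanical, and everything else is substitution.
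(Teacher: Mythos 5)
The paper itself gives no proof of this proposition: it is quoted verbatim from Dacko--Olszak \cite{dacko2005}, so there is no internal argument to compare yours against. Your reconstruction is correct and essentially the standard one. The Koszul computation for \eqref{nabla-deformed-cos} goes through as you describe (the key facts being $\nabla_X\xi=-\phi hX$, the symmetry of $\phi h$, and $d\beta=\xi(\beta)\eta$), the derivation of \eqref{h-deformed} from $\overline{\nabla}_X\overline{\xi}=-\overline{\phi}\,\overline{h}X$ together with injectivity of $\phi$ on $\ker\eta$ is sound, and I verified the curvature step: with $T=\overline{\nabla}-\nabla$ one indeed gets $(\nabla_XT)(Y,\xi)-(\nabla_YT)(X,\xi)=\fra{\xi(\beta)}{\beta}\bigl(\eta(X)\phi hY-\eta(Y)\phi hX\bigr)$ after the symmetric terms ($g(hX,hY)\xi$, $g(\phi hX,Y)\xi$, and the $\xi(\xi(\beta)/\beta)\eta(X)\eta(Y)\xi$ term) cancel, which yields exactly $\overline{\ka}=\ka/\beta^2$, $\overline{\mu}=\mu/\beta$, $\overline{\nu}=(\nu\beta-\xi(\beta))/\beta^2$ after rescaling by $\overline{\eta}=\beta\eta$ and $\overline{h}=h/\beta$. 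Your observation that $d(\xi(\beta))\wedge\eta=0$ follows from $d^2\beta=0$ is precisely what legitimises discarding the $X(\xi(\beta)/\beta)$ term, and the final specialisation $\beta=\sqrt{-\ka}$ with $\xi(\beta)=\nu\beta$ via \eqref{xi-ka} is right. The only caveat worth flagging is in the last sentence: Theorem \ref{teo-xi} guarantees that $\ka$ varies only along $\xi$ only in dimension at least $5$, so in dimension $3$ the admissibility of $\beta=\sqrt{-\ka}$ rests on the standing hypothesis of the second paragraph (that $\ka,\mu,\nu$ vary only in the direction of $\xi$), which is how Dacko--Olszak define these spaces; as written, your appeal to Theorem \ref{teo-xi} alone would leave the $3$-dimensional case uncovered.
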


We will now study the curvature tensor of an almost cosymplectic $(\ka,\mu,\nu)$-space. If $\ka=0$, then we know  its local structure by virtue of Proposition \ref{prop-casi-c}.  If $\ka <0$, then it follows from Proposition  \ref{prop-dacko2} that we can  obtain the writing of its curvature tensor by studying the form of an almost cosymplectic $(\ka,\mu)$-space. Using formula \eqref{xi-ka}, we know that these latter spaces satisfy $\xi(\ka)=0$, so in dimensions greater than or equal to $5$, $\ka$ would be constant and $\mu$ would only vary in the direction of $\xi$. This implies that we can also use \cite{endo2002} because, although in that article Endo focuses in almost cosymplectic $(\ka,\mu)$-spaces with  $\ka,\mu \in \mathbb{R}$, a review of the proofs reveals that they are also true if  $\mu$ is not constant but only varies in the direction of $\xi$.  Hence Theorem 3.1 of \cite{endo2002} would look like this in our case:

\begin{theorem}\label{teo-endo}
If $M$ is an almost cosymplectic $(\ka,\mu)$-space with $\ka<0$, where $\ka,\mu$ only vary in the direction of $\xi$, then
\begin{equation*}
\begin{aligned}
&R(X_{\la},Y_\la)Z_{-\la}&=& \quad \ka\{g(\phi Y_\la,Z_{-\la})\phi X_\la -g(\phi X_{\la}, Z_{-\la}) \phi Y_\la\}, \\
&R(X_{-\la},Y_{-\la})Z_{\la}&=& \quad \ka \{g(\phi Y_{-\la},Z_{\la})\phi X_{-\la} -g(\phi X_{-\la}, Z_{\la}) \phi Y_{-\la}\},\\
&R(X_{\la},Y_{-\la})Z_{-\la}&=&\quad -\ka g(X_{\la},\phi Z_{-\la})\phi Y_{-\la},\\
&R(X_{\la},Y_{-\la})Z_{\la}&=& \quad -\ka g(Z_{\la}, \phi Y_{-\la})\phi X_{\la},\\
&R(X_\la,Y_\la)Z_\la&=&  \quad 0,\\
&R(X_{-\la},Y_{-\la})Z_{-\la}&=& \quad 0,
\end{aligned}
\end{equation*}
where $X_{\pm \la},Y_{ \pm \la},Z_{\pm \la}$ are eigenvectors of  $h$ associated to the eigenvalues $\pm \la=\pm \sqrt{-\ka}$.
\end{theorem}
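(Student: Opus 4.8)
The plan is to reduce to the case of constant $\ka$, describe the eigendistributions of $h$, compute the Levi--Civita connection restricted to them, and then evaluate $R(X,Y)Z=\nabla_X\nabla_YZ-\nabla_Y\nabla_XZ-\nabla_{[X,Y]}Z$ on the six relevant triples of eigenvectors. For the reduction: a $(\ka,\mu)$-space is a $(\ka,\mu,\nu)$-space with $\nu\equiv 0$, and here $\alpha=0$, so \eqref{xi-ka} gives $\xi(\ka)=0$; together with the hypothesis that $\ka$ varies only along $\xi$ this forces $\ka$ to be a negative constant, so $\la=\sqrt{-\ka}$ is a positive constant. (After a $D$-homothetic deformation with the constant factor $\beta=\la$, which by Proposition \ref{prop-dacko2} keeps $\nu=0$, one may even assume $\ka=-1$; this is convenient but not essential.) By Proposition \ref{prop-basica-cos}, $TM=D(\la)\oplus D(-\la)\oplus\langle\xi\rangle$ with $h=\pm\la\,\mathrm{id}$ on $D(\pm\la)$, the summands mutually $g$-orthogonal, and, since $h\phi=-\phi h$ by \eqref{nablaxi}, $\phi$ interchanges $D(\la)$ and $D(-\la)$; moreover $\nabla_X\xi=-\phi hX$, so $\nabla_{X_{\pm\la}}\xi=\mp\la\,\phi X_{\pm\la}$ and $\nabla_\xi\xi=0$.

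The next step is to pin down the connection on the eigendistributions. From \eqref{nabla-phi} with $\alpha=0$ one gets $(\nabla_\xi\phi)=0$, so $\nabla_\xi$ commutes with $\phi$; feeding this, together with $h\xi=0$ and $\nabla_\xi\xi=0$, into \eqref{nabla-xi-phi-h-cos} (where $\nu=0$, so $\nabla_\xi\phi h=\mu h$) shows that for $X\in D(\la)$ the vector $\nabla_\xi X$ has vanishing $\xi$-component and $D(-\la)$-component $-\tfrac{\mu}{2}\phi X$, and symmetrically on $D(-\la)$. Then, using \eqref{nabla-h}, \eqref{nabla-phi}, the symmetry of $h$, $\phi h$, $\nabla_Xh$ and $\nabla_X\phi h$, and the defining condition $d\Phi=0$ of an almost cosymplectic manifold, one checks that $D(\la)$ and $D(-\la)$ are integrable with totally geodesic leaves, i.e.\ $\nabla_{X_{\pm\la}}Y_{\pm\la}\in D(\pm\la)$, while in the mixed case $\nabla_{X_\la}Y_{-\la}\in D(-\la)\oplus\langle\xi\rangle$ with $\xi$-component $\la\,g(Y_{-\la},\phi X_\la)\,\xi$ (and likewise with $\la$ and $-\la$ exchanged). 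Concretely this is the content of the frame computations behind Theorem~3.1 of \cite{endo2002}; the only difference here is that $\mu$ need not be constant, but $\mu$ enters the connection exclusively through the $\nabla_\xi$-terms above, so the argument goes through.

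With these formulas in hand, each of the six identities is a direct substitution into $R(X,Y)Z=\nabla_X\nabla_YZ-\nabla_Y\nabla_XZ-\nabla_{[X,Y]}Z$. When all three arguments lie in one $D(\pm\la)$, total geodesy identifies $R$ with the intrinsic curvature of a leaf, which the connection formulas show to be flat, so $R(X_\la,Y_\la)Z_\la=R(X_{-\la},Y_{-\la})Z_{-\la}=0$. In the mixed cases, the $\nabla_\xi$-contributions -- the only ones carrying $\mu$, hence the only place a stray $\xi(\mu)$ could appear -- cancel in the antisymmetrised combination, and what remains comes from $\nabla_{X_{\pm\la}}\xi=\mp\la\,\phi X_{\pm\la}$ and the $\xi$-components of the mixed covariant derivatives; this yields exactly the factors $\ka=-\la^2$ and the contractions $g(\phi Y,Z)$, $g(X,\phi Z)$ of the statement. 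It is worth cross-checking the outcome against the known value of $R(\xi,X)Y$ in \eqref{eq-R-xi-x-y} using the symmetry $R(X,Y,Z,W)=R(Z,W,X,Y)$, which already pins down several of the components.

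The main obstacle is the middle step: proving that $D(\la)$ and $D(-\la)$ are integrable with flat, totally geodesic leaves, and verifying that relaxing ``$\mu$ constant'' to ``$\mu$ varies only along $\xi$'' does not break the argument. This is precisely where the almost cosymplectic condition $d\Phi=0$ -- and not merely \eqref{kappamunu} -- is genuinely used, and where one must make sure no $\xi(\mu)$-term leaks into the curvature tensor.
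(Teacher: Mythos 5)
Your proposal is correct and follows essentially the same route as the paper: the paper gives no independent proof of Theorem \ref{teo-endo}, but simply imports Theorem 3.1 of \cite{endo2002} after observing that \eqref{xi-ka} forces $\ka$ to be constant and asserting that Endo's frame computations survive when $\mu$ is allowed to vary along $\xi$. Your sketch merely makes explicit the mechanism behind that assertion (that $\mu$ enters the connection only through the $\nabla_\xi$-terms, which do not contribute to $R(X,Y)Z$ when all arguments are eigenvectors of $h$ orthogonal to $\xi$), which is precisely the ``review of the proofs'' the paper alludes to.
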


Using the previous theorem and formula \eqref{eq-R-xi-x-y}, we will give explicitly the form of the curvature tensor of an almost cosymplectic $(\ka,\mu)$-space with $\ka<0$.

\begin{theorem}\label{teo-est-c}
Let $M$ be an almost cosymplectic  $(\ka,\mu)$-space of dimension greater than or equal to  $5$ with $\ka < 0$. Then its Riemann curvature tensor can be written as
\[R=-\ka R_3-R_{5,2}-\mu R_6,\]
where $R_3,R_6$ are the tensors defined in \eqref{R1-R6} and $R_{5,2}$ is the one in \eqref{def-R52}.

Therefore, $M$ is a \emph{g.$(\ka,\mu)$-s.f. with divided $R_5$} $M(f_1,\ldots,f_{5,1},f_{5,2},f_6)$ with functions
\[f_1=f_2=0, \ f_3=-\ka, \ f_4=f_{5,1}=0,  \ f_{5,2}=-1  \mbox{ and } f_6=-\mu, .\]
\end{theorem}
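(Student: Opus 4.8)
The plan is to verify the identity $R=-\ka R_3-R_{5,2}-\mu R_6$ by evaluating both sides on an adapted basis and invoking the multilinearity and symmetries of the curvature tensor. Since $\ka<0$, Proposition~\ref{prop-casi-c} (or Proposition~\ref{prop-basica-cos}) gives the eigenvalue decomposition of $h$: at each point $TM$ splits orthogonally as $\mathbb{R}\xi\oplus D(\la)\oplus D(-\la)$ with $\la=\sqrt{-\ka}$. By the $R_3$-symmetry of $R$ (antisymmetry in the first two slots and in the last two, plus the first Bianchi identity), it suffices to check that the two sides agree when fed triples of vectors drawn from these three eigenspaces, and in fact only finitely many ``types'' of triple need to be checked up to the tensorial symmetries.

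First I would dispatch the cases in which at least one argument is $\xi$. When $Z=\xi$, both $R_3$ and $R_6$ reduce (using $\eta\circ h=0$) so that the claimed formula predicts $R(X,Y)\xi=\ka(\eta(Y)X-\eta(X)Y)+\mu(\eta(Y)hX-\eta(X)hY)$, which is exactly the $(\ka,\mu)$-space condition \eqref{kappamu} (equivalently \eqref{kappamunu} with $\nu=0$); the case $X=\xi$ or $Y=\xi$ follows from \eqref{eq-R-xi-x-y}, which one checks coincides with $-\ka R_3(\xi,X)Y-R_{5,2}(\xi,X)Y-\mu R_6(\xi,X)Y$ after noting $R_{5,2}(\xi,X)Y=0$ because $\phi h\xi=0$ and $g(\phi hX,Y)\xi$ contributes nothing once contracted against $\phi h\,(\cdot)$—more precisely one writes out $R_{5,2}(\xi,X)Y=g(\phi hX,Y)\phi h\xi-g(\phi h\xi,Y)\phi hX=0$. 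Next, for triples lying entirely in $D(\la)\oplus D(-\la)$ (all arguments orthogonal to $\xi$), the tensors $R_3$ and $R_6$ vanish identically, so the claim becomes $R(X,Y)Z=-R_{5,2}(X,Y)Z$ for such $X,Y,Z$; this is precisely the content of Theorem~\ref{teo-endo}, once one rewrites each of its six cases in terms of $R_{5,2}$. For instance, if $X_\la,Y_\la\in D(\la)$ and $Z_{-\la}\in D(-\la)$, then $hX_\la=\la X_\la$, $hY_\la=\la Y_\la$, so $\phi hX_\la=\la\phi X_\la$ etc., and $-R_{5,2}(X_\la,Y_\la)Z_{-\la}=-\la^2\bigl(g(\phi Y_\la,Z_{-\la})\phi X_\la-g(\phi X_\la,Z_{-\la})\phi Y_\la\bigr)=\ka\bigl(g(\phi Y_\la,Z_{-\la})\phi X_\la-g(\phi X_\la,Z_{-\la})\phi Y_\la\bigr)$, matching the first line of Theorem~\ref{teo-endo}; the mixed cases $R(X_\la,Y_{-\la})Z_{\pm\la}$ and the pure cases $R(X_\la,Y_\la)Z_\la=0$ are checked the same way, using that $\phi$ interchanges $D(\la)$ and $D(-\la)$ (since $h\phi=-\phi h$) so the relevant inner products vanish.

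The only remaining point is bookkeeping: one must confirm that the six cases of Theorem~\ref{teo-endo} together with the $\xi$-cases genuinely exhaust all argument types modulo the symmetries $R(X,Y)Z=-R(Y,X)Z$ and $g(R(X,Y)Z,W)=g(R(Z,W)X,Y)$, and that $R_{5,2}$ shares exactly these symmetries so that the verification on basis triples propagates to arbitrary vector fields. I would close by remarking that, having established $R=-\ka R_3-R_{5,2}-\mu R_6$, comparison with the definition \eqref{def-space} of a generalized $(\ka,\mu)$-space form with divided $R_5$ gives the stated functions $f_1=f_2=f_4=f_{5,1}=0$, $f_3=-\ka$, $f_{5,2}=-1$, $f_6=-\mu$ immediately. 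The main obstacle is not conceptual but organisational: matching the case-by-case output of Theorem~\ref{teo-endo} to the single closed tensorial expression $-R_{5,2}$ requires care with the sign conventions for $R_{5,2}$ in \eqref{def-R52} and with the identities $\la^2=-\ka$ and $g(\phi X_\la,Z_{-\la})=-g(X_\la,\phi Z_{-\la})$, and one must be sure that no triple involving two different eigenvalues has been overlooked.
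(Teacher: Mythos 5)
Your proposal is correct and follows essentially the same route as the paper: both decompose $TM$ as $D(\la)\oplus D(-\la)\oplus\langle\xi\rangle$, dispatch the $\xi$-directions with \eqref{kappamu} and \eqref{eq-R-xi-x-y} (yielding the $-\ka R_3-\mu R_6$ part), and identify the restriction of $R$ to vectors orthogonal to $\xi$ with $-R_{5,2}$ via Theorem \ref{teo-endo}. The only difference is organisational: the paper substitutes $X_{\pm\la}=\tfrac{1}{2}\left(X-\eta(X)\xi\pm\tfrac{1}{\la}hX\right)$ to obtain the closed tensorial form in one computation, whereas you verify the finitely many eigenvector cases and then invoke multilinearity, which amounts to the same argument.
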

\begin{proof}
As $\ka < 0$, we know by Proposition \ref{prop-casi-c} that $TM=D(\lambda) \oplus D(-\lambda) \ \oplus <\xi> $, where $\lambda=\sqrt{-\ka}>0$ . Given a vector field  $X$ on $M$, we can write $X=X_\lambda+X_{-\lambda}+\eta(X)\xi$, where $X_{\pm \lambda}$ is an eigenvector of $h$ associated to the eigenvalue $\pm \lambda$. Then, by the properties of $R$ we obtain that
\begin{align*}
R(X,Y)Z=&R(X_\lambda+X_{-\lambda}, Y_\lambda+Y_{-\lambda})(Z_\lambda+Z_{-\lambda})+\eta(X)R(\xi,Y)Z\\
&+\eta(Y)R(X,\xi)Z+\eta(Z) R(X_\la+X_{-\la},Y_\la+Y_{-\la}) \xi,
\end{align*}
from which, using \eqref{kappamu}, we get
\begin{equation}\label{descomposicion-c}
\begin{split}
R(X,Y)Z=&R(X_\lambda+X_{-\lambda}, Y_\lambda+Y_{-\lambda})(Z_\lambda+Z_{-\lambda})\\
&+\eta(X)R(\xi,Y)Z+\eta(Y)R(X,\xi)Z.
\end{split}
\end{equation}

 It follows from equation \eqref{eq-R-xi-x-y} (with $\nu=0$) and the definition of the tensors $R_1, \ldots,R_6$ that
\begin{equation} \label{enesima-intermedia}
\eta(X)R(\xi,Y)Z+\eta(Y)R(X,\xi)Z=-\ka R_3 (X,Y)Z-\mu R_6(X,Y)Z.
\end{equation}

By Theorem \ref{teo-endo}, we obtain that
\begin{align*}
R( X_\lambda+ X_{-\lambda}, Y_\lambda+Y_{-\lambda})(Z_\lambda+Z_{-\lambda})&=
\ka \{ (g(X_\la, \phi Z_{-\la})  -g(X_{-\la}, \phi Z_{\la}) (\phi Y_\la - \phi Y_{-\la}) \\
& \ -(g(Y_{\la}, \phi Z_{-\la}) - g(Y_{-\la},\phi Z_{\la}))(\phi X_{\la}-\phi X_{-\la}) \}.
\end{align*}
From the decomposition $X=X_\lambda+X_{-\lambda}+\eta(X)\xi$, it can be deduced that $X_\la=\fra{1}{2} \left( X-\eta(X)\xi+\fra{1}{\la}hX \right)$ and that $X_{-\la}=\fra{1}{2} \left( X-\eta(X)\xi-\fra{1}{\la}hX \right)$, hence
\begin{equation} \label{enesima-intermedia2}
\begin{aligned}
R(X_\lambda+&X_{-\lambda}, Y_\lambda+Y_{-\lambda})(Z_\lambda+Z_{-\lambda})=\\
&=\fra{\ka}{\la^2} ( -g(\phi h X,Z)\phi h Y+g(\phi h Y,Z)\phi h X  )=-R_{5,2}(X,Y)Z.
\end{aligned}
\end{equation}

Substituting \eqref{enesima-intermedia} and \eqref{enesima-intermedia2} in \eqref{descomposicion-c},  we conclude that
\[R(X,Y)Z=-\ka R_3(X,Y)Z-\mu R_6(X,Y)Z-R_{5,2}(X,Y)Z,\] for all $X,Y,Z$ vector fields on $M$.
\end{proof}

\begin{remark}
By the previous theorem, every almost cosymplectic $(\ka,0)$-space with constant $\ka<0$ has curvature tensor $R=-\ka R_3-R_{5,2},$ which coincides with Lemma 5 from \cite{dacko2000}.
\end{remark}
\begin{example}\label{eje-cos}
P. Dacko and Z. Olszak gave in \cite{dacko2005-b} models of examples of almost cosymplectic $(-1,\mu,0)$-spaces, which they denoted by $N(\mu)$. By virtue of Theorem \ref{teo-est-c}, these spaces are \emph{g.$(\ka,\mu)$-s.f.'s with divided $R_5$} $M(f_1,\ldots,f_{5,1},f_{5,2},f_6)$ with functions
\[f_1=f_2=0, \ f_3=1, \ f_4=f_{5,1}=0,  \ f_{5,2}=-1  \mbox{ and } f_6=-\mu.\]
\end{example}

We will use now the $D$-homothetic deformations given by  \eqref{transf-cos} in order to obtain from the previous theorem the curvature tensor of an almost cosymplectic  $(\ka,\mu,\nu)$-space of dimension greater than or equal to $5$ and $\ka<0$.

\begin{corollary}\label{teo-est-c2}
If $M$ is an almost cosymplectic $(\ka,\mu,\nu)$-space of dimension greater than or equal to $5$ and $\ka<0$, then its Riemannian curvature tensor can be written as
\[R=-\ka R_3-R_{5,2}-\mu R_6-\nu R_8.\]
\end{corollary}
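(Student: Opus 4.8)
The plan is to obtain the claimed formula by applying a $D$-homothetic deformation of the type \eqref{transf-cos} to bring the general $(\ka,\mu,\nu)$-space into the situation already handled by Theorem \ref{teo-est-c}, and then transport the curvature formula back. More precisely, since $\ka<0$, Proposition \ref{prop-dacko2} tells us that choosing $\alpha=1$ and a suitable function $\beta$ (varying only in the direction of $\xi$) we can deform $M$ into an almost cosymplectic $(\overline{\ka},\overline{\mu},\overline{\nu})$-space with $\overline{\nu}=0$; in fact one may even arrange $\overline{\ka}=-1$, but all that matters is $\overline{\nu}=0$, which requires $\beta$ to solve $\xi(\beta)=\nu\beta$. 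The deformed space then falls under Theorem \ref{teo-est-c}, so its curvature tensor $\overline{R}$ equals $-\overline{\ka}\,\overline{R}_3-\overline{R}_{5,2}-\overline{\mu}\,\overline{R}_6$, where the barred tensors are built from $\overline{g},\overline{\phi},\overline{\xi},\overline{\eta},\overline{h}$.

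The core of the argument is then a careful bookkeeping translation. First I would use \eqref{nabla-deformed-cos} together with the standard formula $R(X,Y)Z=\nabla_X\nabla_Y Z-\nabla_Y\nabla_X Z-\nabla_{[X,Y]}Z$ to express $\overline{R}$ in terms of $R$ plus correction terms coming from the tensorial part $-\frac{\beta^2-\alpha}{\beta^2}g(\phi hX,Y)\xi+\frac{\xi(\beta)}{\beta}\eta(X)\eta(Y)\xi$ of the connection difference. Then I would expand each barred tensor $\overline{R}_3,\overline{R}_{5,2},\overline{R}_6$ using $\overline{\phi}=\phi$, $\overline{\eta}=\beta\eta$, $\overline{h}=\frac1\beta h$, and the relation $\overline{g}(X,Y)=g(X,Y)+(\beta^2-1)\eta(X)\eta(Y)$ (taking $\alpha=1$). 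For instance $\overline{R}_3$ will reproduce $R_3$ up to factors of $\beta$ on the $\xi$-terms, $\overline{R}_{5,2}$ involves $\overline{g}(\overline{\phi h}Y,Z)=\frac1\beta g(\phi hY,Z)$ since $\phi hY\perp\xi$, and $\overline{R}_6$ will similarly produce $R_6$-terms and, crucially, terms of the form $\eta(X)\eta(Z)\phi hY$ and $g(\phi hX,Z)\eta(Y)\xi$ — i.e. exactly the building blocks of $R_8$ as defined in \eqref{def-R8}. Matching everything, and using $\overline{\ka}=\ka/\beta^2$, $\overline{\mu}=\mu/\beta$, $\overline{\nu}=0$ so that $\xi(\beta)=\nu\beta$, the $\beta$-factors should cancel and leave precisely $R=-\ka R_3-R_{5,2}-\mu R_6-\nu R_8$.

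An alternative, possibly cleaner, route avoids deformations altogether: decompose $TM=D(\la)\oplus D(-\la)\oplus\langle\xi\rangle$ as in the proof of Theorem \ref{teo-est-c} and split $R(X,Y)Z$ into the part with all arguments in $D(\la)\oplus D(-\la)$, the parts with one $\xi$, and (trivially) the part with $\eta(Z)$. The $\xi$-parts are handled directly by \eqref{eq-R-xi-x-y}: expanding $\eta(X)R(\xi,Y)Z+\eta(Y)R(X,\xi)Z+\eta(Z)R(X,Y)\xi$ and rewriting via the identities $X_{\pm\la}=\frac12(X-\eta(X)\xi\pm\frac1\la hX)$ gives $-\ka R_3-\mu R_6-\nu R_8$, the $\nu$-contribution being exactly where $R_8$ (and not $R_7$) appears. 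The remaining "horizontal" part of $R$ would need to be shown equal to $-R_{5,2}$; for this one would want an analogue of Theorem \ref{teo-endo} valid with $\nu\neq0$, which likely still holds because the horizontal curvature of an almost cosymplectic manifold is insensitive to $\nu$ (this can be seen from \eqref{nabla-phi-h}, \eqref{nabla-h} by restricting to vectors orthogonal to $\xi$, where all $\eta$-terms drop out). Given that the corollary as stated is the $\nu$-deformation of Theorem \ref{teo-est-c}, the deformation route is the intended one, so I would commit to that.

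The main obstacle I anticipate is the sheer volume of correction terms generated by \eqref{nabla-deformed-cos} when computing the two covariant derivatives in $\overline{R}$: one gets terms like $(\nabla_X g(\phi hY,Z))\xi$, terms where $\nabla_X\xi=-\phi hX$ acts on the correction, and cross terms between the two pieces of the connection difference; organizing these and recognizing that they reassemble into multiples of $R_3$, $R_{5,2}$, $R_6$ and $R_8$ (with no leftover) is delicate. In particular one must verify that the $\frac{\xi(\beta)}{\beta}\eta(X)\eta(Y)\xi$ correction, which contributes only when one of $X,Y$ is along $\xi$, produces precisely the $\nu R_8$ term via $\frac{\xi(\beta)}{\beta}=\nu$, and that all genuinely new $\beta$-dependent tensors cancel against each other — this cancellation is forced by the fact that $R$ must be $\beta$-independent, but checking it explicitly is where the real work lies.
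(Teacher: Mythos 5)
Your proposal is essentially correct, and in fact it contains both halves of the paper's argument --- but you commit to the wrong combination of them. The paper's proof is precisely the hybrid of your two routes: it first splits $R(X,Y)Z=R(\w{X},\w{Y})\w{Z}+\eta(X)R(\xi,Y)Z+\eta(Y)R(X,\xi)Z$ and evaluates the $\xi$-terms directly from \eqref{eq-R-xi-x-y}, obtaining the $-\ka R_3-\mu R_6-\nu R_8$ part exactly as in your ``alternative route'' (your sign on $\nu R_8$ agrees with the statement of the corollary); it then uses the $D$-homothetic deformation \eqref{transf-cos} with $\alpha=1$, $\beta=\sqrt{-\ka}$ \emph{only on triples of horizontal vector fields} to identify the remaining piece with $-R_{5,2}$. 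This restriction is the key simplification your committed (global) route forgoes: for $\w{X},\w{Y},\w{Z}\perp\xi$ the correction $\fra{\xi(\beta)}{\beta}\eta(X)\eta(Y)\xi$ in \eqref{nabla-deformed-cos} vanishes identically, and the remaining corrections collapse to $\fra{\ka+1}{\ka}R_{5,2}$ because the derivative terms reassemble into $g((\nabla_{\w{Y}}\phi h)\w{X}-(\nabla_{\w{X}}\phi h)\w{Y},\w{Z})$, which is zero on horizontal arguments by \eqref{nabla-phi-h} --- the very observation you make parenthetically when discussing why the horizontal curvature is insensitive to $\nu$. By contrast, the full transport of $\overline{R}$ for arbitrary arguments that you commit to would force you to track all the $\eta$-laden cross terms and verify their cancellation by hand; that computation is not wrong in principle (your identity $\xi(\beta)=\nu\beta$, which follows from \eqref{xi-ka}, is indeed what makes $\overline{\nu}=0$), but it is redundant, since the $\xi$-components of $R$ are already prescribed by the hypothesis that $M$ is a $(\ka,\mu,\nu)$-space, and the appeal to ``$\beta$-independence forces cancellation'' is a consistency check rather than a derivation of the formula. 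Also note that the worry that blocks you from adopting the alternative route --- needing an analogue of Theorem \ref{teo-endo} with $\nu\neq 0$ --- is exactly what the horizontal deformation argument supplies, so no new eigenvector-level curvature computation is required.
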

\begin{proof}
If we decompose every vector field on $M$ as  $X=\w{X}+\eta(X)\xi$, where $\w{X}$ is a  vector field orthogonal to $\xi$, we obtain:
\begin{equation*}
R(X,Y)Z=R(\w{X}, \w{Y})\w{Z}+\eta(X)R(\xi,Y)Z+\eta(Y)R(X,\xi)Z.
\end{equation*}

Using formula \eqref{eq-R-xi-x-y} and the definition of the tensors $R_1, \ldots,R_8$, it follows from a direct computation that
\[\eta(X)R(\xi,Y)Z+\eta(Y)R(X,\xi)Z=-\ka R_3 (X,Y)Z-\mu R_6(X,Y)Z+\nu R_8(X,Y)Z,\]
which substituted in the previous equation gives
\begin{equation}\label{descomposicion-c2}
R(X,Y)Z=R(\w{X}, \w{Y})\w{Z}-\ka R_3 (X,Y)Z-\mu R_6(X,Y)Z+\nu R_8(X,Y)Z.
\end{equation}

We do not know, in general,  $R(\w{X}, \w{Y})\w{Z}$ on a  $(\ka,\mu,\nu)$-space, but if we use the $D$-homothetic deformations \eqref{transf-cos} with  $\alpha=1$ and $\beta=\sqrt{-\ka}$, we obtain that the deformed manifold is a $(-1,\overline{\mu})$-space, with  $\overline{\mu}=\mu / \sqrt{-\ka}$. This is thanks to Proposition \ref{prop-dacko2}, which can be applied because the functions  $\ka, \mu, \nu$ only vary in the direction of $\xi$ (Theorem \ref{teo-xi}).

Given a vector field $\w{X}$,  orthogonal to $\xi$ with respect to $g$, then it is also orthogonal to $\overline{\xi}$ with respect to $\overline{g}$. Therefore, the next formula follows from Theorem \ref{teo-est-c} and the fact that $h \xi=0$:
\begin{equation*}
\overline{R}(\w{X}, \w{Y})\w{Z}=\overline{R}_3(\w{X}, \w{Y})\w{Z}-\overline{R}_{5,2}(\w{X}, \w{Y})\w{Z}-\overline{\mu} \overline{R}_6(\w{X}, \w{Y})\w{Z}=-\overline{R}_{5,2}(X,Y)Z,
\end{equation*}
for every vector fields $X,Y,Z$ on $M$. Moreover, if we use  \eqref{transf-cos} and \eqref{h-deformed}, we obtain that $\overline{R}_{5,2}(X,Y)Z=-1/\ka R_{5,2}(X,Y)Z$ for every  $X,Y,Z$, so
\begin{equation} \label{eq-aux}
\overline{R}(\w{X}, \w{Y})\w{Z}=\fra{1}{\ka} R_{5,2}(X,Y)Z.
\end{equation}
It is now enough to see the relation between  $R(\w{X}, \w{Y})\w{Z}$ and $\overline{R}(\w{X}, \w{Y})\w{Z}$.

If we substitute $\alpha=1$ and $\beta=\sqrt{-\ka}$ in the formula \eqref{nabla-deformed-cos} and use \eqref{xi-ka}, we obtain that
\[\overline{\nabla}_X Y=\nabla_X Y-\fra{\ka+1}{\ka} g(\phi h X,Y) \xi +\nu \eta(X) \eta(Y) \xi.\]

By the definition of the Riemannian curvature tensor
\[\overline{R} (\w{X},\w{Y})\w{Z}={\overline{\nabla}}_{\w{X}} \overline{\nabla}_{\w{Y}} \w{Z} -\overline{\nabla}_{\w{Y}} \overline{\nabla}_{\w{X}} \w{Z} -\overline{\nabla}_{[\w{X},\w{Y}]} \w{Z}\]  and the fact that  $\w{X}(\ka)=\w{Y}(\ka)=0$ (Theorem \ref{teo-xi}),  after some computations we get
\begin{equation}\label{eq-aux3}
\begin{aligned}
\overline{R}(\w{X}, \w{Y})\w{Z}&= R(\w{X}, \w{Y})\w{Z}+\fra{\ka+1}{\ka} (-g(\phi h \w{Y}, \w{Z})\overline{\nabla}_{\w{X}} \xi+g(\phi h \w{X},\w{Z}) \overline{\nabla}_{\w{Y}} \xi\\
&+(g(\phi h \w{Y},\nabla_{\w{X}} \w{Z})-g(\phi h \w{X},\nabla_{\w{Y}} \w{Z})
+\w{Y}(g(\phi h \w{X},\w{Z}))-\w{X}(g(\phi h \w{Y},\w{Z}))\\
&+g(\phi h [\w{X},\w{Y}],\w{Z}) )\xi ).
\end{aligned}
\end{equation}

On the other hand, $\overline{\nabla}_{\w{X}} \xi=-\sqrt{-\ka} \overline{\nabla}_{\w{X}} \overline{\xi}=\sqrt{-\ka} \overline{\phi} \overline{h} \w{X}=-\phi h \w{X}$, so
\[-g(\phi h \w{Y},\w{Z})\overline{\nabla}_{\w{X}} \xi+g(\phi h \w{X},\w{Z}) \overline{\nabla}_{\w{Y}} \xi=g(\phi h \w{Y},\w{Z})\phi h \w{X}-g(\phi h \w{X},\w{Z}) \phi h \w{Y}=R_{5,2}(\w{X},\w{Y})\w{Z}.\]

By the properties of the Levi-Civita connection and the equation \eqref{nabla-phi-h}, we have
\begin{gather*}
g(\phi h \w{Y},\nabla_{\w{X}} \w{Z})-g(\phi h \w{X},\nabla_{\w{Y}} \w{Z})+\w{Y}(g(\phi h \w{X},\w{Z}))-\w{X}(g(\phi h \w{Y},\w{Z}))\\
+g(\phi h [\w{X},\w{Y}],\w{Z}) =g((\nabla_{\w{Y}} \phi h) \w{X}-(\nabla_{\w{X}} \phi h )\w{Y},\w{Z})=0.
\end{gather*}

Using the last two formulas in \eqref{eq-aux3}, we get
\[\overline{R}(\w{X}, \w{Y})\w{Z}=R(\w{X}, \w{Y})\w{Z}+\fra{\ka+1}{\ka} R_{5,2} (X,Y)Z, \]
with substituting in \eqref{eq-aux} gives
\begin{equation}\label{eq-aux2}
R(\w{X}, \w{Y})\w{Z}=- R_{5,2} (X,Y)Z,
\end{equation} for every vector fields $X,Y,Z$ on $M$.

Substituting \eqref{eq-aux2} in \eqref{descomposicion-c2}, we conclude that
\[R(X,Y)Z=-\ka R_3(X,Y)Z-\mu R_6(X,Y)Z+\nu R_8(X,Y)Z-R_{5,2}(X,Y)Z,\] for every $X,Y,Z$ vector fields on $M$.
\end{proof}

Corollary \ref{teo-est-c2} suggests that it would be useful to introduce the concept of \emph{g.$(\ka,\mu,\nu)$-s.f. with divided $R_5$} in the same way that it was done with \emph{g.$(\ka,\mu)$-s.f.'s with divided $R_5$} in  \cite{nuestro2}.
\begin{definition}
A  \emph{g.$(\ka,\mu,\nu)$-s.f. with divided $R_5$} $M(f_1,\ldots,f_{5,1},f_{5,2},\ldots,f_8)$ is an almost contact metric manifold whose curvature tensor can be written as
\[R=f_1R_1+f_2R_2+f_3R_3+f_4R_4+f_{5,1}R_{5,1}+f_{5,2}R_{5,2}+f_6R_6+f_7R_7+f_8R_8,\]
where $f_1, \ldots, f_8$ are functions on $M$, $R_1, \ldots, R_6$ the tensors   that appear in \eqref{R1-R6}, $R_{5,1}$ and $R_{5,2}$  the ones defined in \eqref{def-R51} and \eqref{def-R52}, and finally $R_7$ and $R_8$ the ones in  \eqref{def-R7} and \eqref{def-R8}.
\end{definition}

As $R_5=R_{5,1}-R_{5,2}$, it is obvious that every \emph{g.$(\ka,\mu,\nu)$-s.f.} $M(f_1,\ldots,f_8)$ is a  \emph{g.$(\ka,\mu,\nu)$-s.f. with divided  $R_5$} $M(f_1,\ldots,f_{5,1},f_{5,2},\ldots,f_8)$ with $f_{5,1}=f_5$ and $f_{5,2}=-f_5$.

By virtue of Corollary \ref{teo-est-c2}, every almost cosymplectic $(\ka,\mu,\nu)$-space of dimension greater than or equal to $5$ with $\ka<0$ is a  \emph{g.$(\ka,\mu,\nu)$-s.f. with divided $R_5$} $M(f_1,\ldots,f_{5,1},f_{5,2},\ldots,f_8)$ with functions:
\[f_1=f_2=f_4=f_{5,1}=f_7=0, \ f_3=-\ka, \ f_{5,2}=-1, \ f_6=-\mu, \ f_8=-\nu.\]

We will later see that these manifolds cannot be \emph{g.$(\ka,\mu,\nu)$-s.f.'s} $M(f_1,\ldots,f_8)$, i.e. their curvature tensor cannot be written without dividing $R_5$. This will justify the definition of the \emph{g.$(\ka,\mu,\nu)$-s.f.'s with divided $R_5$}. In fact, we will prove a more general result: the non-existence of almost cosymplectic \emph{g.$(\ka,\mu,\nu)$-s.f.'s} $M(f_1,\ldots,f_8)$ of dimension greater than or equal to $5$ and $\ka=f_1-f_3<0$.

In order to do that, we will first present a couple of results. The proof of the first one is analogous to the one of Proposition 4.1 of \cite{nuestro-murathan}, true for contact metric \emph{g.$(\ka,\mu,\nu)$-s.f.'s}. This is possible because $h$ is symmetric and anticommutes with $\phi$ on all three structures:

\begin{proposition}\label{prop-basica-dividido}
If $M(f_1,\ldots,f_{5,1},f_{5,2},\ldots,f_8)$ is a  \emph{g.$(\ka,\mu,\nu)$-s.f. with divided $R_5$} with almost cosymplectic or almost Kenmotsu structures, then it is a $(\kappa,\mu,\nu)$-space with $\kappa=f_1-f_3$, $\mu=f_4-f_6$ and $\nu=f_7-f_8$.
\end{proposition}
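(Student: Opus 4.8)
The plan is to evaluate $R(X,Y)\xi$ directly from the expression
\[
R=f_1R_1+f_2R_2+f_3R_3+f_4R_4+f_{5,1}R_{5,1}+f_{5,2}R_{5,2}+f_6R_6+f_7R_7+f_8R_8
\]
defining a \emph{g.$(\ka,\mu,\nu)$-s.f. with divided $R_5$}, and then to match the outcome with \eqref{kappamunu}. The only structural input needed is the set of universal almost contact metric identities $\phi\xi=0$, $\eta\circ\phi=0$ together with $h\xi=0$ and the symmetry of $h$; from the latter two one gets $\eta\circ h=0$ and hence also $\eta\circ\phi h=0$. All of these hold for almost cosymplectic and almost Kenmotsu (in fact for all $\alpha$-cosymplectic) structures by \eqref{nablaxi}, which is precisely why the statement is restricted to these cases, exactly as in Proposition 4.1 of \cite{nuestro-murathan}.

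First I would compute $R_i(X,Y)\xi$ for each of the nine tensors, using $g(\,\cdot\,,\xi)=\eta(\,\cdot\,)$. Three of them vanish: in $R_2(X,Y)\xi$ every summand contains a factor $g(\,\cdot\,,\phi\xi)$ or $\phi\xi$, and in $R_{5,1}(X,Y)\xi$ and $R_{5,2}(X,Y)\xi$ every summand contains $g(hX,\xi)=0$ or $g(\phi hX,\xi)=0$. For the remaining six, using $\eta(\xi)=1$, $\eta\circ h=0$ and $\eta\circ\phi h=0$, one obtains
\[
R_1(X,Y)\xi=-R_3(X,Y)\xi=\eta(Y)X-\eta(X)Y,
\]
\[
R_4(X,Y)\xi=-R_6(X,Y)\xi=\eta(Y)hX-\eta(X)hY,
\]
\[
R_7(X,Y)\xi=-R_8(X,Y)\xi=\eta(Y)\phi hX-\eta(X)\phi hY.
\]

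Substituting these back and collecting coefficients yields
\[
R(X,Y)\xi=(f_1-f_3)\{\eta(Y)X-\eta(X)Y\}+(f_4-f_6)\{\eta(Y)hX-\eta(X)hY\}+(f_7-f_8)\{\eta(Y)\phi hX-\eta(X)\phi hY\},
\]
and comparing term by term with \eqref{kappamunu} shows that $M$ is a $(\ka,\mu,\nu)$-space with $\ka=f_1-f_3$, $\mu=f_4-f_6$ and $\nu=f_7-f_8$. There is no genuine obstacle; the only thing requiring care is the bookkeeping, in particular making sure that the potential $h$- and $\phi h$-terms arising from the $g(\,\cdot\,,\xi)$-contractions in $R_3$, $R_6$ and $R_8$ are killed by the correct identities so that nothing spurious survives.
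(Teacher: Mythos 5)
Your proof is correct and is essentially the argument the paper has in mind: the paper simply defers to the analogous computation in Proposition 4.1 of \cite{nuestro-murathan}, noting that it goes through because $h$ is symmetric (and $h\xi=0$) on these structures, which is exactly the input you isolate to kill the spurious terms in $R_3$, $R_4$, $R_6$, $R_{5,1}$, $R_{5,2}$ and $R_8$. Your explicit evaluation of each $R_i(X,Y)\xi$ and the resulting pairings $R_3=-R_1$, $R_6=-R_4$, $R_8=-R_7$ on $\xi$ match what that reference's proof does, so there is nothing to add.
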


We will now present a result that is true for manifolds of dimension greater than or equal to $5$ but not of dimension $3$, which will be studied at the end of the section.

\begin{theorem}\label{unicidad-dividido}
If $M(f_1,\ldots,f_{5,1},f_{5,2},\ldots,f_8)$ is an almost cosymplectic \emph{g.$(\ka,\mu,\nu)$-s.f. with divided $R_5$} of dimension $2n+1 \geq5$ with $\ka<0$, then the writing of its curvature tensor is unique.
\end{theorem}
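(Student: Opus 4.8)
The plan is to show that if the curvature tensor of $M$ admits two expressions
\[
R=\sum_i f_i R_i = \sum_i f_i' R_i
\]
in terms of the nine tensors $R_1,R_2,R_3,R_4,R_{5,1},R_{5,2},R_6,R_7,R_8$, then $f_i=f_i'$ for all indices. Equivalently, writing $g_i=f_i-f_i'$, I must prove that a vanishing linear combination $\sum_i g_i R_i = 0$ forces every $g_i=0$. The whole argument therefore reduces to establishing that the nine tensors $R_1,\dots,R_8$ (with $R_5$ split into $R_{5,1},R_{5,2}$) are pointwise linearly independent on an almost cosymplectic $(\kappa,\mu,\nu)$-space with $\kappa<0$ and $\dim M = 2n+1\ge 5$.

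The key structural input is Proposition \ref{prop-casi-c}: since $\kappa<0$ we have the orthogonal splitting $TM=D(\lambda)\oplus D(-\lambda)\oplus\langle\xi\rangle$ with $\lambda=\sqrt{-\kappa}>0$, each eigendistribution of $h$ having rank $n\ge 2$, and $\phi$ interchanging $D(\lambda)$ and $D(-\lambda)$. On such a space $hX=\lambda$-(component) $-\lambda$-(component), so $h$ acts as a known invertible operator on $\xi^{\perp}$, and likewise $\phi h$. The steps I would carry out: first, evaluate the relation $\sum_i g_i R_i(X,Y)Z = 0$ on a triple $X,Y,Z$ all orthogonal to $\xi$; this kills $R_3,R_6,R_8$ (which all carry an $\eta$ factor in a slot) and leaves a combination of $R_1,R_2,R_4,R_{5,1},R_{5,2},R_7$. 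Then, choosing $X,Y,Z$ to be suitable combinations of eigenvectors in $D(\lambda)$ and $D(-\lambda)$ — using that $n\ge 2$ to pick, say, two independent vectors in $D(\lambda)$ — I can isolate each of these six coefficients in turn, exactly as in the eigenvector computations of Theorem \ref{teo-endo} and Theorem \ref{teo-est-c}. For instance, feeding in $X,Y\in D(\lambda)$ with $Z\in D(\lambda)$ versus $Z\in D(-\lambda)$ separates the terms where $g$ pairs $Z$ with $hX$ or $\phi hX$ (nonzero only for matching sign) from the "pure" $R_1,R_2$ parts. Second, having forced $g_1=g_2=g_4=g_{5,1}=g_{5,2}=g_7=0$, I return to the general relation and take $Y=\xi$ (or insert $\xi$ in the appropriate slot): the surviving identity then involves only $R_3,R_6,R_8$, whose actions are $g(X,Z)\xi-\ldots$, $g(hX,Z)\xi-\ldots$, and $g(\phi hX,Z)\xi-\ldots$; since $\mathrm{Id},h,\phi h$ are linearly independent operators on $\xi^{\perp}$ (as $h$ has the two distinct nonzero eigenvalues $\pm\lambda$ and $\phi h$ is not a scalar multiple of either), I conclude $g_3=g_6=g_8=0$.

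The main obstacle I anticipate is the bookkeeping in the first stage: the six tensors $R_1,R_2,R_4,R_{5,1},R_{5,2},R_7$ overlap substantially in their "shape" (all produce outputs of the form $g(\cdot,Z)(\cdot)$), so I must be careful to choose enough distinct eigenvector configurations — mixing $D(\lambda)$ and $D(-\lambda)$ in the $X,Y$ slots and separately in the $Z$ slot, and also using the $2g(X,\phi Y)\phi Z$ term that distinguishes $R_2$ — to get a system that genuinely has only the trivial solution. This is where $\dim M\ge 5$ is essential: in dimension $3$ one has $D(\lambda)$ and $D(-\lambda)$ each of rank $1$, the eigenvector choices collapse, and indeed (as the paper notes, and as in \cite{nuestro-murathan}) $R_7=-R_8$ and other coincidences make the nine tensors dependent, so uniqueness genuinely fails there. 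Once the linear-independence lemma is in hand, uniqueness of the writing is immediate: two expressions for $R$ differ by a vanishing combination, hence are identical coefficient by coefficient.
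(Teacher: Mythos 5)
Your proposal is correct and follows essentially the same route as the paper: decompose $TM=D(\lambda)\oplus D(-\lambda)\oplus\langle\xi\rangle$ via Proposition \ref{prop-casi-c}, evaluate the vanishing linear combination on eigenvector configurations (using $n\ge 2$, hence dimension $\ge 5$) to force the coefficients of $R_1,R_2,R_4,R_{5,1},R_{5,2},R_7$ to vanish, and then recover the remaining three coefficients by inserting $\xi$ into a slot. The only cosmetic difference is that the paper obtains $f_3-f_3^*=f_6-f_6^*=f_8-f_8^*=0$ by invoking Proposition \ref{prop-basica-dividido} (equality of $\kappa,\mu,\nu$ for both writings), whereas you evaluate the difference tensor directly at $Y=\xi$ and use the pointwise independence of $\mathrm{Id}$, $h$, $\phi h$ on $\xi^{\perp}$ --- the same computation in different packaging.
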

\begin{proof}
Let us suppose that we can write the Riemann curvature tensor of $M$ as
\[R=f_1R_1+f_2R_2+f_3R_3+f_4R_4+f_{5,1}R_{5,1}+f_{5,2}R_{5,2}+f_6R_6+f_7R_7+f_8R_8\]
and
\[R=f_1^* R_1+f_2^* R_2+f_3^* R_3+f_4^* R_4+f_{5,1}^* R_{5,1}+f_{5,2}^* R_{5,2}+f_6^* R_6+f_7^* R_7+f_8^*R_8,\]
for certain functions  $f_i, f_i^*, \ i=1,\ldots,8$.

Then
\begin{equation}\label{ecuacion=0}
\begin{aligned}
(f_1-f_1^*)R_1+(f_2-f_2^*)R_2+(f_3-f_3^*)R_3+(f_4-f_4^*)R_4+(f_{5,1}-f_{5,1}^*) R_{5,1} \\
+(f_{5,2}-f_{5,2}^*) R_{5,2}+(f_6-f_6^*) R_6+(f_7-f_7^*) R_7+(f_8-f_8^*) R_8=0.
\end{aligned}
\end{equation}

On the other hand, we know by Proposition \ref{prop-basica-dividido} that $M$ is a $(\ka,\mu,\nu)$-space with
\begin{align}
\ka&=f_1-f_3=f_1^*-f_3^*,      \nonumber\\
\mu&=f_4-f_6=f_4^*-f_6^*,      \label{dem-kamunu}\\
\nu&=f_7-f_8=f_7^*-f_8^*.      \nonumber
\end{align}
Applying  Proposition \ref{prop-casi-c} we obtain that
\begin{equation}\label{dem-unicidad-dividido}
TM=D(\la) \oplus D(-\la) \oplus <\xi>,
\end{equation}
where $\pm \la$ are the eigenvalues of the operator $h$, $\la>0$ and  $dim D(\la)=dim D(-\la)=n \geq 2$.

If we take in the equation \eqref{ecuacion=0} vector fields $X,Y \in D(\la)$ unit and mutually orthogonal and  $Z=\phi X$, then
\[f_2-f_2^*+\la^2(f_{5,2}-f_{5,2}^*)=0, \quad f_7-f_7^*=0.\]

If we choose $X, Z \in D(-\la)$ unit and mutually orthogonal and $Y=\phi Z$, the space satisfies that
\[-(f_2-f_2^*)+\la^2(f_{5,2}-f_{5,2}^*)=0, \quad f_7-f_7^*=0.\]

Taking $X, Z \in D(\la)$ unit and mutually orthogonal and $Y=\phi X$, we get that \[f_2-f_2^*=0.\]

If $X=Z \in D(\la)$ is unit and $Y=\phi X$, then
\[-3(f_2-f_2^*)+\la^2(f_{5,1}-f_{5,1}^*)+\la^2(f_{5,2}-f_{5,2}^*)=0.\]

If we choose $X, Y \in D(\la)$ unit and mutually orthogonal and  $Z= X$, we deduce that
\[f_1-f_1^*+2\la(f_4-f_4^*)+\la^2 (f_{5,1}-f_{5,1}^*)=0,   \quad  f_7-f_7^*=0.\]

Analogously, taking $X, Y \in D(-\la)$ unit and mutually orthogonal and $Z= Y$, then
\[f_1-f_1^*-2\la(f_4-f_4^*)+\la^2 (f_{5,1}-f_{5,1}^*)=0,   \quad  f_7-f_7^*=0.\]

Gathering all the  equations, we obtain the next system:
\[\left.
\begin{array}{rcc}
f_2-f_2^*+\la^2(f_{5,2}-f_{5,2}^*)&=&0\\
-(f_2-f_2^*)+\la^2(f_{5,2}-f_{5,2}^*)&=&0\\
f_2-f_2^*&=&0\\
-3(f_2-f_2^*)+\la^2(f_{5,1}-f_{5,1}^*)+\la^2(f_{5,2}-f_{5,2}^*)&=&0\\
f_1-f_1^*+2\la(f_4-f_4^*)+\la^2 (f_{5,1}-f_{5,1}^*)&=&0\\
f_1-f_1^*-2\la(f_4-f_4^*)+\la^2 (f_{5,1}-f_{5,1}^*)&=&0\\
f_7-f_7^*&=&0
\end{array} \right\}\]
which can be solved using that $\la>0$. Its solution would be:
\[f_1-f_1^*=f_2-f_2^*=f_4-f_4^*=f_{5,1}-f_{5,1}^*=f_{5,2}-f_{5,2}^*=f_7-f_7^*=0.\]

By equations \eqref{dem-kamunu}, we also have that
\[f_3-f_3^*=f_6-f_6^*=f_8-f_8^*=0,\]
and we conclude that $f_i=f_i^*$ for every $i=1,\ldots,8$. Therefore, the writing of the curvature tensor is unique.
\end{proof}

It is worth remarking that the previous theorem is also true if the structure is contact metric and $\ka<1$ or if the structure is almost Kenmotsu and $\ka<-1$:

\begin{theorem}\label{unicidad-dividido2}
If $M(f_1,\ldots,f_{5,1},f_{5,2},\ldots,f_8)$ is a contact metric (resp. almost Kenmotsu) \emph{g.$(\ka,\mu,\nu)$-s.f. with divided $R_5$}
of dimension $2n+1 \geq 5$ with $\ka<1$ (resp. $\ka<-1$), then the writing of its curvature tensor is unique.
\end{theorem}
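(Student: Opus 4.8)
The plan is to imitate the proof of Theorem \ref{unicidad-dividido} step by step, since the only place where the almost cosymplectic hypothesis entered there was through the decomposition \eqref{dem-unicidad-dividido} of the tangent bundle into $D(\la)\oplus D(-\la)\oplus\langle\xi\rangle$ with $\dim D(\pm\la)=n\ge 2$. So first I would point out that by Proposition \ref{prop-basica-dividido} (whose statement explicitly allows the almost Kenmotsu structure, and whose contact metric analogue is Proposition 4.1 of \cite{nuestro-murathan}) the manifold $M$ is a $(\ka,\mu,\nu)$-space with $\ka=f_1-f_3$, and that the two hypothetical writings of $R$ yield, by subtraction, exactly equation \eqref{ecuacion=0} with the differences $f_i-f_i^*$ as coefficients, together with the three identities in \eqref{dem-kamunu}.

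Next I would establish the eigenvalue decomposition. For the contact metric case with $\ka<1$, this is the classical result of Blair--Koufogiorgos--Papantoniou: since $h^2=(\ka-1)\phi^2$ on a contact metric $(\ka,\mu)$- or $(\ka,\mu,\nu)$-space and $\ka<1$, the operator $h$ has eigenvalues $0,\pm\la$ with $\la=\sqrt{1-\ka}>0$ and $\dim D(\pm\la)=n\ge 2$. For the almost Kenmotsu case I would invoke Proposition \ref{prop-basica} with $\alpha=1$: there $h^2=(\ka+1)\phi^2$, so the hypothesis $\ka<-1$ gives $\ka+1<0$, hence $h$ has eigenvalues $0,\pm\la$ with $\la=\sqrt{-(\ka+1)}>0$ and multiplicities $1,n,n$ with $n\ge 2$ since $\dim M\ge 5$. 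In both cases one also uses $h\phi=-\phi h$ (from \eqref{eq-cont-h} or \eqref{nablaxi}) so that $\phi$ interchanges $D(\la)$ and $D(-\la)$, which is what makes the substitutions in \eqref{ecuacion=0} work.

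Once the decomposition is in hand, the argument is purely formal and identical to the one already given: plugging into \eqref{ecuacion=0} the same seven choices of $X,Y,Z$ built from unit vectors in $D(\pm\la)$ and their $\phi$-images (using that $g(hX,Y)=\pm\la\, g(X,Y)$ and $\phi h X=\pm\la\,\phi X$ on these subspaces, exactly as before) produces the same linear system in the unknowns $f_i-f_i^*$, whose unique solution over $\la>0$ forces $f_1-f_1^*=f_2-f_2^*=f_4-f_4^*=f_{5,1}-f_{5,1}^*=f_{5,2}-f_{5,2}^*=f_7-f_7^*=0$, and then \eqref{dem-kamunu} gives $f_3-f_3^*=f_6-f_6^*=f_8-f_8^*=0$ as well. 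I would simply remark that the computations are word for word those in the proof of Theorem \ref{unicidad-dividido}, the only modification being the value of $\la^2$ (namely $1-\ka$ in the contact metric case and $-1-\ka$ in the almost Kenmotsu case), which does not affect the solvability of the system. Thus the writing is unique.

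\textbf{Main obstacle.} There is essentially no computational obstacle, since everything reduces to the already-proven Theorem \ref{unicidad-dividido}; the only point requiring care is verifying that the structural facts used there—namely $h$ symmetric, $h\phi=-\phi h$, the $h^2=(\ka+\varepsilon)\phi^2$ relation with the appropriate $\varepsilon$, and hence the three-summand eigenspace decomposition with $n\ge 2$—genuinely hold for contact metric structures with $\ka<1$ and for almost Kenmotsu structures with $\ka<-1$. Once those are cited from \cite{bisr} and from Proposition \ref{prop-basica} respectively, the proof goes through verbatim.
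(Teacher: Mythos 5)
Your proposal is correct and follows exactly the route the paper intends: the paper states Theorem \ref{unicidad-dividido2} without proof, as a remark that the argument of Theorem \ref{unicidad-dividido} carries over, and your verification of the needed structural facts ($h$ symmetric, $h\phi=-\phi h$, $h^2=(\ka-1)\phi^2$ resp. $h^2=(\ka+1)\phi^2$, hence the decomposition $D(\la)\oplus D(-\la)\oplus\langle\xi\rangle$ with $\la=\sqrt{1-\ka}$ resp. $\sqrt{-1-\ka}$ and $n\ge 2$) is precisely what makes that transfer legitimate. The subsequent linear system in the differences $f_i-f_i^*$ is indeed unchanged, so the argument goes through verbatim.
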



We can now prove the result we wanted.

\begin{proposition}\label{noexist-cos}
There are no almost cosymplectic \emph{g.$(\ka,\mu,\nu)$-s.f.'s} $M(f_1,\ldots,f_8)$ of dimension greater than or equal to  $5$ with $\ka=f_1-f_3<0$.
\end{proposition}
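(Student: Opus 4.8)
The plan is to argue by contradiction. Suppose $M(f_1,\ldots,f_8)$ is an almost cosymplectic g.$(\ka,\mu,\nu)$-s.f. of dimension $2n+1\geq 5$ with $\ka=f_1-f_3<0$. By Proposition~\ref{prop-basica-dividido} (applied with $f_{5,1}=f_5$, $f_{5,2}=-f_5$), $M$ is an almost cosymplectic $(\ka,\mu,\nu)$-space with $\ka<0$, so Corollary~\ref{teo-est-c2} applies and gives us a second expression for the curvature tensor,
\[
R=-\ka R_3-R_{5,2}-\mu R_6-\nu R_8,
\]
i.e. $R$ written as a g.$(\ka,\mu,\nu)$-s.f. with divided $R_5$ having $f_{5,1}^*=0$ and $f_{5,2}^*=-1$. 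On the other hand, the assumed writing $R=f_1R_1+\cdots+f_8R_8$ is, trivially, also a writing with divided $R_5$, this time with $f_{5,1}=f_5$ and $f_{5,2}=-f_5$.

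The key step is now to invoke Theorem~\ref{unicidad-dividido}: since $M$ is an almost cosymplectic g.$(\ka,\mu,\nu)$-s.f. with divided $R_5$ of dimension at least $5$ with $\ka<0$, the writing of its curvature tensor in terms of $R_1,\ldots,R_{5,1},R_{5,2},\ldots,R_8$ is unique. Comparing the two writings coefficient by coefficient forces $f_{5,1}=f_{5,1}^*=0$ and $f_{5,2}=f_{5,2}^*=-1$. But for the original writing $M(f_1,\ldots,f_8)$ we have $f_{5,1}=f_5$ and $f_{5,2}=-f_5$, so these two equalities read $f_5=0$ and $-f_5=-1$, i.e. $f_5=0$ and $f_5=1$ simultaneously, a contradiction. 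Hence no such manifold exists.

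The only real content beyond bookkeeping is checking that all the hypotheses of the cited results are genuinely met: that the almost contact metric manifold in question truly is a $(\ka,\mu,\nu)$-space (this is exactly Proposition~\ref{prop-basica-dividido}), that $\ka<0$ so that Proposition~\ref{prop-casi-c}, Corollary~\ref{teo-est-c2} and Theorem~\ref{unicidad-dividido} all apply, and that the dimension restriction $2n+1\geq5$ is in force (needed both for Theorem~\ref{teo-xi}, which underlies Corollary~\ref{teo-est-c2}, and for the uniqueness result, whose proof fails in dimension $3$). I expect the main — and essentially only — obstacle to be a careful statement of this last point: one must make explicit that the dimension-$3$ case is excluded precisely because $R_7=-R_8$ there, so the coefficients $f_{5,1},f_{5,2},f_7,f_8$ are no longer separately determined, which is why the proposition is stated only for dimension $\geq5$ and the $3$-dimensional situation is treated separately later in the section.
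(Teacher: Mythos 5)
Your proposal is correct and follows essentially the same route as the paper's own proof: view the assumed writing as one with divided $R_5$ (so $f_{5,1}=f_5$, $f_{5,2}=-f_5$), invoke Proposition~\ref{prop-basica-dividido} and Corollary~\ref{teo-est-c2} to get the second writing with $f_{5,1}^*=0$, $f_{5,2}^*=-1$, and conclude by the uniqueness of Theorem~\ref{unicidad-dividido} that $f_5=0$ and $f_5=1$ simultaneously. The hypothesis-checking remarks you add are accurate and consistent with how the paper applies these results.
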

\begin{proof}
Suppose that there exists an almost cosymplectic \emph{g.$(\ka,\mu,\nu)$-s.f.} $M(f_1,\ldots,f_8)$ of dimension greater than or equal to $5$ with $\ka=f_1-f_3<0$. Then  $M$ is also a  \emph{g.$(\ka,\mu,\nu)$-s.f. with divided $R_5$} $M(f_1,\ldots,f_{5,1},f_{5,2},\ldots,f_8)$ with $f_{5,1}=f_5$ and $f_{5,2}=-f_5$, so
\[R=f_1R_1+f_2R_2+f_3R_3+f_4R_4+f_5R_{5,1}-f_5 R_{5,2}+f_6R_6+f_7R_7+f_8R_8.\]

On the other hand, as the manifold is almost cosymplectic, it is also a $(f_1-f_3,f_4-f_6,f_7-f_8)$-space with $f_1-f_3<0$,  $f_4-f_6$ and $f_7-f_8$ varying only in the direction of $\xi$  (Proposition \ref{prop-basica-dividido} and Theorem \ref{teo-xi}). Hence we can apply  Corollary \ref{teo-est-c2} and obtain that the curvature tensor can be written as
\[R=-(f_1-f_3)R_3-R_{5,2}-(f_4-f_6)R_6-(f_7-f_8) R_8.\]

By Theorem \ref{unicidad-dividido}, the functions of both writings of the curvature tensor must coincide, so we would have in particular that $f_5=0$ and $f_5=1$, which is absurd. The non-existence is thus proved.
\end{proof}

\begin{example}\label{eje-cos-nu}
We already showed in  Example  \ref{eje-cos} that  P. Dacko and Z. Olszak gave in  \cite{dacko2005-b} models of almost cosymplectic $(-1,\mu,0)$-spaces. If we $D$-homothetically deform them, we obtain almost cosymplectic $(\overline{\ka},\overline{\mu},\overline{\nu})$-spaces with \[\overline{\ka}=-\fra{1}{\beta^2}<0, \ \overline{\mu}=\fra{\mu}{\beta}, \ \overline{\nu}=-\fra{\xi(\beta)}{\beta^2}.\]

By the previous remark, if these deformed spaces have dimension greater than or equal to $5$, then they are
\emph{g.$(\ka,\mu,\nu)$-s.f.'s with divided $R_5$} $M(f_1,\ldots,f_{5,1},f_{5,2},\ldots,f_8)$ with functions:
\[f_1=f_2=0, \ f_3=\fra{1}{\beta^2},\ f_4=f_{5,1}=0, \ f_{5,2}=-1, \ f_6=-\fra{\mu}{\beta}, \ f_7=0, \ f_8=\fra{\xi(\beta)}{\beta^2}.\]
It is worth noting that, in this case, $f_{5,1}\neq -f_{5,2}$ and that $f_8$ is, in general, a non-constant function, hence these spaces are not \emph{g.$(\ka,\mu,\nu)$-s.f.'s} $M(f_1,\ldots,f_8)$ or  \emph{g.$(\ka,\mu)$-s.f.'s with divided $R_5$} $M(f_1,\ldots,f_{5,1},f_{5,2},f_6)$. This further justifies the definition and study of the \emph{g.$(\ka,\mu,\nu)$-s.f.'s with divided $R_5$} $M(f_1,\ldots,f_{5,1},f_{5,2},\ldots,f_8)$.
\end{example}

We can also use Theorem \ref{unicidad-dividido} to determine some of the functions of  an almost cosymplectic \emph{g.$(\ka,\mu,\nu)$-s.f. with divided $R_5$} $M(f_1,\ldots,f_{5,1},f_{5,2},\ldots,f_8)$ of dimension greater than or equal to $5$ with $f_1-f_3<0$.

\begin{theorem}\label{teodim5dividido-cos}
Let $M(f_1,\ldots,f_{5,1},f_{5,2},\ldots,f_8)$ be an almost cosymplectic \emph{g.$(\ka,\mu)$-s.f. with divided $R_5$}. If its dimension is greater than or equal to $5$ and $f_1-f_3<0$, then $M$ satisfies
\[f_1=f_2=f_4=f_{5,1}=f_7=0, \  f_{5,2}=-1, \ f_3>0,\]
and $f_3,f_6,f_8$ are functions that only vary in the direction of $\xi$, i.e.  $M$ is a $(-f_3,-f_6,-f_8)$-space with $f_3>0$.
\end{theorem}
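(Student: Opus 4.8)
The plan is to combine Theorem~\ref{teo-est-c2} (which gives the explicit curvature tensor of an almost cosymplectic $(\ka,\mu,\nu)$-space with $\ka<0$) with the uniqueness result of Theorem~\ref{unicidad-dividido}. First I would observe that, since $M(f_1,\ldots,f_{5,1},f_{5,2},\ldots,f_8)$ is an almost cosymplectic \emph{g.$(\ka,\mu,\nu)$-s.f. with divided $R_5$}, Proposition~\ref{prop-basica-dividido} tells us it is a $(\ka,\mu,\nu)$-space with $\ka=f_1-f_3$, $\mu=f_4-f_6$ and $\nu=f_7-f_8$. By hypothesis $\ka=f_1-f_3<0$, so $h\neq 0$ and Proposition~\ref{prop-basica} gives $\ka\le 0$ strictly; moreover, since the dimension is $\ge 5$, Theorem~\ref{teo-xi} ensures that $\ka$, $\mu$ and $\nu$ only vary in the direction of $\xi$, and also that $\ka$ is constant precisely when $\nu=0$ by Proposition~\ref{prop-casi-c}.

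Next I would apply Corollary~\ref{teo-est-c2} to this $(\ka,\mu,\nu)$-space: its curvature tensor can be written as
\[
R=-\ka R_3-R_{5,2}-\mu R_6-\nu R_8,
\]
that is, as a \emph{g.$(\ka,\mu,\nu)$-s.f. with divided $R_5$} with functions $f_1^*=f_2^*=f_4^*=f_{5,1}^*=f_7^*=0$, $f_3^*=-\ka$, $f_{5,2}^*=-1$, $f_6^*=-\mu$, $f_8^*=-\nu$. Now I have two expressions of the same curvature tensor in the divided-$R_5$ basis: the original $(f_1,\ldots,f_8)$ and the one coming from Corollary~\ref{teo-est-c2}. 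Since the dimension is $\ge 5$ and $\ka<0$, Theorem~\ref{unicidad-dividido} guarantees the writing is unique, so the two sets of functions must agree term by term: $f_1=f_2=f_4=f_{5,1}=f_7=0$, $f_{5,2}=-1$, $f_3=-\ka=f_3-f_1=f_3$ (consistent, giving $f_3=-\ka>0$), $f_6=-\mu$, $f_8=-\nu$. In particular $f_3>0$, and since $\ka=-f_3$, $\mu=-f_6$, $\nu=-f_8$ only vary in the direction of $\xi$, the same is true of $f_3$, $f_6$ and $f_8$; thus $M$ is a $(-f_3,-f_6,-f_8)$-space with $f_3>0$, as claimed.

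There is no serious obstacle here: the statement is essentially a corollary of Theorem~\ref{teo-est-c2} and Theorem~\ref{unicidad-dividido}, and the only care needed is in checking that all the hypotheses of those two results are met — namely that the dimension is $\ge 5$, that $\ka=f_1-f_3<0$ (so that $h\ne 0$ and the eigenspace decomposition $TM=D(\la)\oplus D(-\la)\oplus\langle\xi\rangle$ of Proposition~\ref{prop-casi-c} is available), and that $\ka,\mu,\nu$ vary only along $\xi$ so that Corollary~\ref{teo-est-c2} (which rests on the $D$-homothetic deformation machinery of Proposition~\ref{prop-dacko2}) applies. The mildest subtlety is simply recording that $f_3-f_1=f_3$ forces $f_3>0$ from $-\ka>0$; everything else is a direct comparison of coefficients.
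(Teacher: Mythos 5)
Your proposal is correct and follows essentially the same route as the paper's own proof: Proposition~\ref{prop-basica-dividido} to identify $M$ as a $(\ka,\mu,\nu)$-space with $\ka=f_1-f_3<0$, Corollary~\ref{teo-est-c2} to produce the second writing of $R$, Theorem~\ref{unicidad-dividido} to match coefficients, and Theorem~\ref{teo-xi} for the directional dependence of $f_3,f_6,f_8$. The extra observations (e.g.\ invoking Proposition~\ref{prop-casi-c} on constancy of $\ka$) are harmless but not needed.
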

\begin{proof}
By Proposition \ref{prop-basica-dividido}, we know that $M$ is a $(\ka,\mu,\nu)$-space with $\ka=f_1-f_3<0$, $\mu=f_4-f_6$ and $\nu=f_7-f_8$. Therefore, we can apply Corollary \ref{teo-est-c2}, which tells us that the Riemann curvature tensor can be written as
\[R=-\ka R_3-R_{5,2}-\mu R_6-\nu R_8=-(f_1-f_3) R_3-R_{5,2}-(f_4-f_6) R_6-(f_7-f_8) R_8.\]
By the definition of \emph{g.$(\ka,\mu,\nu)$-s.f. with divided  $R_5$} and the uniqueness of the writing of the curvature tensor (Theorem \ref{unicidad-dividido}), we obtain that
\[f_1=f_2=f_4=f_{5,1}=f_7=0, \  f_{5,2}=-1.\]
Therefore, $\ka=-f_3$, $\mu=-f_6$ and $\nu=-f_8$. By hypothesis, $f_1-f_3=-f_3<0$, so $f_3>0$. The rest of the result is a consequence of Theorem \ref{teo-xi}.
\end{proof}


We will now analyse the  almost Kenmotsu  $(\ka,\mu,\nu)$-spaces, which have been studied by other authors in some particular cases. For instance, G. Dileo and A. M. Pastore  studied in \cite{dileo09} the almost Kenmotsu $(\ka,\mu)$-spaces and  $(\ka,0,\nu)$-spaces with constant $\ka,\mu,\nu$, although they used a different notation. Among other results, they proved that, if a $(\ka,\mu)$-space is almost Kenmotsu, where  $\ka,\mu \in \mathbb{R}$, then $\ka=-1$ and $h=0$ (Theorem 4.1). If a $(\ka, 0,\nu)$-space is almost Kenmotsu, with $h \neq 0$, then $\ka < -1$ and $\nu=2$ (Proposition 4.1). They also gave examples of almost Kenmotsu $(-1-\la^2,0,2)$-spaces for all $\la \in \mathbb{R}$.

The equation \eqref{h-phi}  of  Proposition \ref{prop-basica} (with $\alpha=1$) makes $\ka$ play an important role when analysing the behaviour of the almost Kenmotsu $(\ka,\mu,\nu)$-spaces. We will now study these spaces distinguishing between $\ka=-1$ and $\ka<-1$, obtaining results that generalise those appearing in \cite{dileo09}. The proofs are similar and will therefore be omitted.

\begin{proposition}\label{prop-casi-k2}
Let $M^{2n+1}$ be an almost Kenmotsu $(\ka,\mu,\nu)$-space.

If $\ka =-1$, then $h=0$ and $M^{2n+1}$ is locally the warped product $M' \times_{f^2} N^{2n}$, where $N^{2n}$ is an almost K\"{a}hler manifold, $M'$ an open interval of coordinate $t$ and $f^2=ce^{2t}$ for some positive constant $c$. Moreover, it is a Kenmotsu manifold if the dimension is $3$ ($n=1$).

If $\ka<-1 \  (h \neq 0)$, then $M$ is not Kenmotsu and the eigenvalues of $h$ (equal to those of $\phi h$) are $0$ (with multiplicity $1$) and $\pm \la=\pm  \sqrt{-1-\ka} \neq 0$ (each one with multiplicity $n$). Moreover,  $\mu=-2 g(\nabla_\xi X,\phi X)$ holds for every $X$ unit eigenvector  of $h$ associated to $\pm \la$.
\end{proposition}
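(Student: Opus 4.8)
The plan is to run this argument in close parallel with the proof of Proposition~\ref{prop-basica-cos}, now applying Proposition~\ref{prop-basica} with $\alpha=1$, so that $h^{2}=(\ka+1)\phi^{2}$ plays the role that $h^{2}=\ka\phi^{2}$ played there; the dictionary is $\ka\leftrightarrow\ka+1$ and, via \eqref{xi-ka}, $\nu\leftrightarrow\nu-2$.

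For the case $\ka=-1$, equation \eqref{h-phi} gives $h^{2}=(\ka+1)\phi^{2}=0$, and since $h$ is symmetric this forces $h=0$. Then \eqref{nablaxi} reduces to $\nabla_{X}\xi=X-\eta(X)\xi$, so the distribution $\ker\eta$ (integrable because $d\eta=0$) has totally umbilical leaves and $\xi$ is a unit geodesic field. From here the local warped-product description $M'\times_{f^{2}}N^{2n}$, with $N^{2n}$ almost K\"{a}hler and $f^{2}=ce^{2t}$, follows from the structure theorem for almost Kenmotsu manifolds with $h=0$ in \cite{dileo09} (see also \cite{dileo07}); and when $n=1$, where $\phi$ restricted to the two-dimensional leaf is automatically integrable, the three-dimensional characterisation of Kenmotsu manifolds — the Kenmotsu analogue of Corollary~5.6 of \cite{olszak81} already used in Proposition~\ref{prop-basica-cos} — shows that $M$ is Kenmotsu. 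This delegation to the cited structure results is where the real content of this case lies.

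For the case $\ka<-1$, Proposition~\ref{prop-basica} tells us this is exactly the situation $h\neq 0$, so $M$ is not Kenmotsu, since a Kenmotsu manifold satisfies $\nabla_{X}\xi=X-\eta(X)\xi$, which forces $\phi h=0$ and hence $h=0$. From $h^{2}=(\ka+1)\phi^{2}$ and the symmetry of $h$, on the $2n$-dimensional distribution $\ker\eta$ the operator $h^{2}$ acts as $\la^{2}\,\mathrm{Id}$ with $\la=\sqrt{-1-\ka}>0$, so $h$ has eigenvalue $0$ on $\langle\xi\rangle$ and $\pm\la$ on $\ker\eta$; since $h\phi=-\phi h$, the map $\phi$ interchanges $D(\la)$ and $D(-\la)$ isomorphically, whence $\dim D(\la)=\dim D(-\la)=n$. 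The operator $\phi h$ is symmetric (because $h$ is symmetric, $\phi$ is skew-symmetric and $h\phi=-\phi h$) and satisfies $(\phi h)^{2}=-\phi^{2}h^{2}=h^{2}$ on $\ker\eta$, so it has the same eigenvalues $0,\pm\la$ with the same multiplicities.

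It remains to establish $\mu=-2g(\nabla_{\xi}X,\phi X)$ for a unit $X\in D(\pm\la)$. Setting $Y=\xi$ in \eqref{nabla-phi-h} with $\alpha=1$ and computing $(\nabla_{X}\phi h)\xi=-\phi h\,\nabla_{X}\xi$ by means of \eqref{nablaxi} and \eqref{h-phi}, one obtains the almost Kenmotsu analogue of \eqref{nabla-xi-phi-h-cos}, namely $\nabla_{\xi}\phi h=\mu h+(\nu-2)\phi h$ on $\ker\eta$. Evaluating this on a unit $X\in D(\la)$ gives $(\nabla_{\xi}\phi h)X=\la\mu X+\la(\nu-2)\phi X$; taking the inner product with $X$ and using that $\phi h$ is symmetric, that $\nabla_{\xi}\phi=0$ (immediate from \eqref{nabla-phi} with $X=\xi$), and that $\la$ varies only in the direction of $\xi$, the left-hand side equals $-2\la\,g(\nabla_{\xi}X,\phi X)$; dividing by $\la\neq 0$ yields the claim, and $X\in D(-\la)$ leads to the same identity. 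The only genuinely delicate point of the whole proof is the warped-product description in the case $\ka=-1$, which is not re-proved here but taken from \cite{dileo09}; everything else is a transcription of the almost cosymplectic argument under the substitution $\ka\mapsto\ka+1$, $\nu\mapsto\nu-2$.
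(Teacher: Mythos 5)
Your proof is correct and matches the paper's treatment exactly in spirit: the paper omits this proof altogether, stating only that the arguments are similar to those of Dileo--Pastore \cite{dileo09}, and what you write is precisely that transcription --- delegation to \cite{dileo09}/\cite{dileo07} for the warped-product structure in the case $\ka=-1$, and a line-by-line adaptation of the almost cosymplectic Proposition \ref{prop-basica-cos} (under $\ka\mapsto\ka+1$, $\nu\mapsto\nu-2$) for the eigenvalue structure and the identity $\mu=-2g(\nabla_\xi X,\phi X)$. One caution worth recording: your reduction $\nabla_X\xi=X-\eta(X)\xi$ when $h=0$ and your key identity $\nabla_\xi\phi h=\mu h+(\nu-2)\phi h$ rest on the standard almost $\alpha$-cosymplectic relation $\nabla_X\xi=-\alpha\phi^2X-\phi hX$ rather than on \eqref{nablaxi} as literally printed (which reads $-\alpha\phi X$ and appears to be a typo); with the printed version the same computation would give $\mu=-1-2g(\nabla_\xi X,\phi X)$, so you are implicitly using the corrected convention, which is the one consistent with the proposition's statement and with \cite{dileo09}.
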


As it happened in the almost cosymplectic case, we know from Theorem \ref{teo-xi} that, on an almost Kenmotsu $(\ka,\mu,\nu)$-space of dimension greater than or equal to $5$, the functions $\ka$, $\mu$, and $\nu$ only vary in the direction of $\xi$.

We will now study the writing of the curvature tensor of an almost Kenmotsu $(\ka,\mu,\nu)$-space with  $\ka<-1$ (if $\ka=-1$, we know its local structure by Proposition \ref{prop-casi-k2}). We will first present some results that generalise the ones  appearing in \cite{dileo09} for $(\ka,0,\nu)$-spaces, where $\ka$ and $\nu$ are constant. The proofs are analogous and will be omitted.

\begin{proposition}
Let $M$ be an almost Kenmotsu $(\ka,\mu,\nu)$-space with $\ka<-1$. Then
\begin{equation*}
\begin{aligned}
R(X,Y)\phi Z&-\phi R(X,Y)Z=\\
&=g(\phi^2 X+\phi h X,Z)(\phi Y+h Y)-g(\phi^2 Y+\phi h Y,Z)(\phi X+h X)\\
&+g(\phi X+h X,Z)(\phi^2 Y+\phi h Y)-g(\phi Y+h Y,Z)(\phi^2 X+\phi h X)\\
&+\ka(\eta(Y)(g(\phi X,Z)\xi-\eta(Z) \phi X)-\eta(X)(g(\phi Y,Z)\xi-\eta(Z)\phi Y))\\
&-\nu R_6(X,Y)Z+\mu R_8(X,Y)Z,
\end{aligned}
\end{equation*}
for every $X,Y,Z$ vector fields on $M$.

In particular, if $X,Y,Z$ are orthogonal to $\xi$, then
\begin{equation}\label{prop1-ken}
\begin{aligned}
R(X,Y)\phi Z&-\phi R(X,Y)Z=\\
&=g(-X+\phi h X,Z)(\phi Y+h Y)-g(-Y+\phi h Y,Z)(\phi X+h X)\\
&-g(\phi Y+h Y,Z)(-X+\phi h X)+g(\phi X+h X,Z)(- Y+\phi h Y).
\end{aligned}
\end{equation}
\end{proposition}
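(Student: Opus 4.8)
The plan is to deduce the identity from the general Ricci-type commutation formula for the $(1,1)$-tensor field $\phi$,
\[
R(X,Y)\phi Z-\phi R(X,Y)Z=(\nabla_X(\nabla_Y\phi))Z-(\nabla_Y(\nabla_X\phi))Z-(\nabla_{[X,Y]}\phi)Z,
\]
which holds on any Riemannian manifold, and then to insert the structural data available on an almost Kenmotsu $(\kappa,\mu,\nu)$-space (so $\alpha=1$ everywhere below).

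First I would substitute \eqref{nabla-phi} with $\alpha=1$, namely $(\nabla_Y\phi)Z=g(\phi Y+hY,Z)\xi-\eta(Z)(\phi Y+hY)$, and expand each term $(\nabla_X(\nabla_Y\phi))Z=\nabla_X\big((\nabla_Y\phi)Z\big)-(\nabla_Y\phi)(\nabla_X Z)$ by the Leibniz rule. The differentiation only calls for formulas already at hand: $\nabla_X\xi$ from \eqref{nablaxi}, $\nabla_X\phi$ again from \eqref{nabla-phi}, the covariant derivative $\nabla_X h$ of $h$, the compatibility of $\nabla$ with $g$, and $X(\eta(Z))=g(\nabla_X\xi,Z)+\eta(\nabla_X Z)$.

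Next I would antisymmetrise in $X\leftrightarrow Y$. All the terms that involve covariant derivatives of $Z$ must cancel, since the left-hand side is tensorial, and among the rest many pair off. To bring what survives into closed form one uses $h\phi=-\phi h$, $\eta\circ h=0$, $h\xi=0$ and, where needed, $h^2=(\kappa+1)\phi^2$ from \eqref{h-phi}; the antisymmetric combination $(\nabla_Y h)X-(\nabla_X h)Y$ is replaced by its value \eqref{nabla-h}, and it is this step that produces the summand carrying $\kappa$, $\mu$ and $\nu$, that is
\[
\kappa\big(\eta(Y)(g(\phi X,Z)\xi-\eta(Z)\phi X)-\eta(X)(g(\phi Y,Z)\xi-\eta(Z)\phi Y)\big)-\nu R_6(X,Y)Z+\mu R_8(X,Y)Z,
\]
while the remaining terms assemble into the quadratic-in-$(\phi,h)$ part. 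Finally, restricting to $X,Y,Z$ orthogonal to $\xi$ kills every summand containing $\eta$ and turns $\phi^2X$ into $-X$, which yields \eqref{prop1-ken} immediately.

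The main obstacle is purely the bookkeeping of the antisymmetrisation: one has to carry several second-order terms together with the contributions of $\nabla h$ and $\nabla\xi$, and then repeatedly apply $h\phi=-\phi h$ and $h^2=(\kappa+1)\phi^2$ to collapse the expression into the compact form stated. As the surrounding text indicates, the computation runs parallel to the one in \cite{dileo09} for $(\kappa,0,\nu)$-spaces with constant $\kappa$ and $\nu$; the only genuinely new point is that $\mu$ and $\nu$ are allowed to be non-constant functions, which changes nothing in the algebra because their derivatives never enter — only the pointwise identities \eqref{nabla-h}, \eqref{nabla-phi}, \eqref{nablaxi} and \eqref{h-phi} are used.
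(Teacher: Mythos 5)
Your proof is correct and follows the same route the paper intends: the proposition's proof is omitted here as ``analogous'' to the computation in \cite{dileo09}, which is exactly your argument --- the Ricci commutation identity $R(X,Y)\phi Z-\phi R(X,Y)Z=(\nabla_X\nabla_Y\phi-\nabla_Y\nabla_X\phi-\nabla_{[X,Y]}\phi)Z$ fed with \eqref{nabla-phi}, the antisymmetrised \eqref{nabla-h} (which is where $\kappa$, $\mu$, $\nu$ enter, with no derivatives of these functions ever appearing), and the expression for $\nabla\xi$. One caveat on your list of inputs: to recover the $\phi^2X+\phi hX$ blocks in the stated identity you must use $\nabla_X\xi=-\alpha\phi^2X-\phi hX$ (the correct almost $\alpha$-cosymplectic formula, as in \cite{kim} and \cite{murathan-arxiv}, which reduces to $\nabla_X\xi=-\phi^2X-\phi hX$ for $\alpha=1$); formula \eqref{nablaxi} as printed reads $-\alpha\phi X-\phi hX$, and taking it literally would leave a $-\phi X$ where the proposition has $\phi^2X$.
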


\begin{lemma}
Let $M^{2n+1}$ be an almost Kenmotsu $(\ka,\mu,\nu)$-space with $\ka<-1$ a function that only varies in the direction of $\xi$. Then
\begin{equation*}
\begin{aligned}
(\nabla_X \phi h)Y&=g((\ka+1)\phi^2 X-\phi h X,Y)\xi\\
&+\eta(Y)((\ka+1)\phi^2 X-\phi h X)+\eta(X)(\mu h Y+(\nu-2)\phi h Y),
\end{aligned}
\end{equation*}
for every  $X,Y,Z$ vector fields on $M$.
\end{lemma}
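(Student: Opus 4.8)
The plan is to compute $(\nabla_X \phi h)Y$ directly from the product rule, splitting into the cases where $X$ or $Y$ equals $\xi$ and where both are orthogonal to $\xi$, and then reassembling. First I would recall from \eqref{nablaxi} (with $\alpha=1$) that $\nabla_X \xi = -\phi X - \phi h X$, and note the identity $(\nabla_X \phi h)Y = \nabla_X(\phi h Y) - \phi h(\nabla_X Y)$. The strategy is to pin down the $\xi$-components and the behaviour along $\xi$ by testing against $\xi$, using the already-established formulas, and to handle the part orthogonal to $\xi$ using \eqref{nabla-phi-h} together with \eqref{prop1-ken}.

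The key steps, in order: (1) Compute $(\nabla_\xi \phi h)X$. Using $\nabla_\xi \xi = 0$ and $h\xi = 0$, one gets $(\nabla_\xi \phi h)\xi = 0$; for $X$ orthogonal to $\xi$ the term $\nabla_\xi \phi h$ is governed by the analogue of \eqref{nabla-xi-phi-h-cos} for the almost Kenmotsu case, which yields the $\eta(X)(\mu h Y + (\nu-2)\phi h Y)$ contribution after contracting. (2) Compute $(\nabla_X \phi h)\xi$ for $X$ orthogonal to $\xi$: expand $\nabla_X(\phi h \xi) - \phi h \nabla_X \xi = -\phi h(-\phi X - \phi h X) = \phi h \phi X + \phi h \phi h X$, and simplify using $h\phi = -\phi h$, $\phi^2 = -\mathrm{Id} + \eta\otimes\xi$, and \eqref{h-phi} (with $\alpha=1$), namely $h^2 = (\ka+1)\phi^2$. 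This produces the $g((\ka+1)\phi^2 X - \phi h X, Y)\xi$ type terms once one pairs with an arbitrary $Y$; more precisely it gives $(\nabla_X \phi h)\xi = -((\ka+1)\phi^2 X - \phi h X) = h^2 X \cdot(\text{sign}) + \ldots$, matching the $\xi$-coefficient in the claimed formula by the symmetry $g((\nabla_X \phi h)Y,\xi) = -g(Y, (\nabla_X \phi h)\xi)$ valid because $\phi h$ is skew-symmetric and $\xi$ is $\nabla$-related as above. (3) For $X, Y$ both orthogonal to $\xi$, show the $\xi$-free part of $(\nabla_X \phi h)Y$ vanishes: antisymmetrize to use \eqref{nabla-phi-h}, which for $X,Y \perp \xi$ gives $(\nabla_Y \phi h)X = (\nabla_X \phi h)Y$, then symmetrize and use \eqref{prop1-ken} (or directly $(\nabla_X \phi)h Y + \phi(\nabla_X h)Y$ via \eqref{nabla-phi} and \eqref{nabla-h}) to eliminate the remaining piece. (4) Combine (1)–(3) by writing a general $X = \widetilde{X} + \eta(X)\xi$, $Y = \widetilde{Y} + \eta(Y)\xi$ and collecting terms.

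The main obstacle I expect is step (3): establishing that the component of $(\nabla_X \phi h)Y$ orthogonal to $\xi$ is exactly zero when $X,Y \perp \xi$. The antisymmetric part is controlled immediately by \eqref{nabla-phi-h}, but the symmetric part requires a genuine identity — presumably one derived from \eqref{nabla-phi} and \eqref{nabla-h} giving $(\nabla_X \phi h)Y = (\nabla_X \phi)(hY) + \phi(\nabla_X h)Y$, and then the symmetrized combination must be shown to be a pure multiple of $\xi$ using $h^2 = (\ka+1)\phi^2$ and the anticommutation $\phi h = -h\phi$. Since the excerpt explicitly says ``the proofs are analogous and will therefore be omitted,'' I would in fact defer here: the computation mirrors exactly the one in \cite{dileo09} for almost Kenmotsu $(\ka,0,\nu)$-spaces with constant $\ka,\nu$, the only difference being that $\mu$ need not vanish and $\ka,\mu,\nu$ are allowed to vary in the direction of $\xi$ (which is harmless since all derivatives involved are either along $\xi$ or kill these functions by Theorem \ref{teo-xi}), so the $\mu h Y$ term and the correct coefficient $\nu - 2$ on $\phi h Y$ appear precisely as in \eqref{nabla-phi-h} and \eqref{nabla-h}.
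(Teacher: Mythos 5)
Your overall skeleton (split along $\langle\xi\rangle\oplus\ker\eta$, compute the two $\xi$-directions explicitly, reduce the rest to the block $X,Y\perp\xi$, reassemble) is the right one, and since the paper itself omits this proof with the remark that it is analogous to \cite{dileo09}, your final decision to defer matches what the paper actually does. But two points in your sketch are genuinely wrong or unproved. First, in step (2): $\phi h$ is \emph{symmetric}, not skew-symmetric, because $h$ is symmetric and anticommutes with $\phi$: $g(\phi hX,Y)=-g(hX,\phi Y)=-g(X,h\phi Y)=g(X,\phi hY)$. (The paper relies on this symmetry elsewhere, e.g.\ in Proposition \ref{prop-basica-dividido}.) So the identity $g((\nabla_X\phi h)Y,\xi)=-g(Y,(\nabla_X\phi h)\xi)$ you invoke to repair the sign is false; the correct relation has a plus sign. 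Moreover, the computation as you set it up does not close: with $\nabla_X\xi=-\phi X-\phi hX$ as printed in \eqref{nablaxi} one gets $(\nabla_X\phi h)\xi=\phi h\phi X+\phi h\phi hX=hX+h^2X=hX+(\ka+1)\phi^2X$, which does \emph{not} equal the $(\ka+1)\phi^2X-\phi hX$ demanded by the lemma. The resolution is that for almost Kenmotsu manifolds the correct formula (as in \cite{dileo07,dileo09,kim}) is $\nabla_X\xi=-\phi^2X-\phi hX$; \eqref{nablaxi} should read $-\alpha\phi^2X-\phi hX$ (note that $-\phi X$ would give the $K$-contact condition when $h=0$, not the Kenmotsu one). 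With that, $(\nabla_X\phi h)\xi=\phi h\phi^2X+\phi h\phi hX=-\phi hX+(\ka+1)\phi^2X$ falls out directly, no sign gymnastics needed, and step (1) then gives $(\nabla_\xi\phi h)Y=\mu hY+(\nu-2)\phi hY$ by subtracting this from \eqref{nabla-phi-h} evaluated at $Y=\xi$.

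Second, step (3) is not a detail but the entire nontrivial content of the lemma, and the identities you cite do not yield it. Writing $(\nabla_X\phi h)Y=(\nabla_X\phi)(hY)+\phi((\nabla_Xh)Y)$ and using \eqref{nabla-phi} correctly reduces the claim to showing $(\nabla_Xh)Y\in\langle\xi\rangle$ for $X,Y\perp\xi$. But \eqref{nabla-h} (and likewise \eqref{nabla-phi-h}) only controls the part of $\nabla h$ that is antisymmetric in $X,Y$; combined with the symmetry of $\nabla_X h$ in its remaining two slots, you learn only that $g((\nabla_Xh)Y,Z)$ is totally symmetric on $\ker\eta$, which does not force it to vanish. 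The vanishing requires the additional structural input used in \cite{dileo09} (the integrability and geometry of the eigendistributions of $h$ under the nullity condition), so your proposal as written leaves the key step unestablished. Since the paper omits the proof too, this does not put you at odds with the paper, but you should not present steps (1)--(2)--(4) as if only a routine verification were missing.
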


\begin{proposition}
Let $M$ be an almost Kenmotsu $(\ka,\mu,\nu)$-space with $\ka<-1$ a function that only varies in the direction of $\xi$. Then
\begin{gather}
R(X,Y)\phi h Z -\phi h  R(X,Y)Z=  \label{prop2-ken}\\
=(\ka+2)(g(Y,Z)\phi h X-g(X,Z)\phi h Y+g(\phi h X,Z)Y-g(\phi h Y,Z)X), \nonumber
\end{gather} for every $X,Y,Z$ vector fields orthogonal to $\xi$.
\end{proposition}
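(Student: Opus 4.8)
The plan is to view the left-hand side of \eqref{prop2-ken} as the action of the curvature operator on the $(1,1)$-tensor field $P:=\phi h$ and to compute it via the Ricci identity: fixing a point $p$ and a local frame geodesic at $p$, one has
\[
R(X,Y)PZ-P\,R(X,Y)Z=\nabla_X\bigl((\nabla_YP)Z\bigr)-\nabla_Y\bigl((\nabla_XP)Z\bigr)\quad\text{at }p .
\]
The input for the right-hand side is exactly the preceding Lemma, which gives, for arbitrary $U,V$,
\[
(\nabla_UP)V=g(BU,V)\xi+\eta(V)\,BU+\eta(U)\bigl(\mu\,hV+(\nu-2)PV\bigr),\qquad BU:=(\ka+1)\phi^2U-PU .
\]
So the first step is to substitute this and differentiate once more, letting the covariant derivative hit only the structure tensors; the tools needed are all in the excerpt: $\nabla_W\xi=-\phi W-\phi hW$ (equation \eqref{nablaxi} with $\alpha=1$), hence $(\nabla_W\eta)(\cdot)=-g(\phi W+\phi hW,\cdot)$; the formula \eqref{nabla-phi} for $\nabla\phi$; the algebraic relations $h\phi=-\phi h$, $\phi\xi=0$, $\eta\circ h=0$ and $h^2=(\ka+1)\phi^2$ from \eqref{h-phi}; and, since $\ka,\mu,\nu$ vary only along $\xi$ (Theorem \ref{teo-xi}), the vanishing $W(\ka)=W(\mu)=W(\nu)=0$ for $W\perp\xi$.

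Second, I would antisymmetrise in $X,Y$ and then restrict $X,Y,Z$ to $\xi^{\perp}$. The point is that large blocks of terms die. The part of $\nabla P$ carrying the explicit factor $\eta(U)$ — the entire $\mu,\nu$ contribution — produces, after one more derivative, only terms that are again proportional to $\eta$, to $\xi$, or to $\mu,\nu$ (and their $\xi$-derivatives); these either vanish on $\xi^{\perp}$ or cancel in pairs under the antisymmetrisation, which is precisely why $\mu$ and $\nu$ are absent from \eqref{prop2-ken}. The $\xi$-valued term $g(BU,V)\xi$ contributes a piece $g(BU,V)\nabla_W\xi=-g(BU,V)(\phi W+\phi hW)$, and after antisymmetrising this is exactly what rebuilds the $\phi h$-valued, $R_7$-type terms on the right-hand side, the remaining $\xi$-valued descendants cancelling. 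One has to be careful that $\nabla_XY$ generically carries a $\xi$-component, $g(\nabla_XY,\xi)=g(Y,\phi X+\phi hX)$, so the correction terms $-(\nabla_{\nabla_XY}P)Z$ do feed back through the $\eta$-dependent part of the Lemma even when $X,Y,Z\perp\xi$; these contributions are part of what produces the final coefficient.

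Third, collecting the survivors and using $\phi^2=-\mathrm{id}$ and $P^2=(\phi h)^2=-(\ka+1)\,\mathrm{id}$ on $\xi^{\perp}$, one is left with a combination of $g(Y,Z)\phi hX$, $g(X,Z)\phi hY$, $g(\phi hX,Z)Y$ and $g(\phi hY,Z)X$; matching coefficients yields the common factor $\ka+2$ and hence \eqref{prop2-ken}. The main obstacle is clearly the bookkeeping in the second step: the second covariant derivative of $\phi h$ is a sum of a couple of dozen terms, and one must check that after antisymmetrisation and restriction to $\xi^{\perp}$ all $\mu$-, $\nu$- and $\xi$-valued pieces cancel and the remainder assembles with the single coefficient $\ka+2$. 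A convenient way to pin down that coefficient, and a useful consistency check, is to first verify the identity on eigenvectors of $h$ (equivalently of $\phi h$) for the eigenvalues $0,\pm\la$ with $\la=\sqrt{-1-\ka}$, using the splitting $TM=D(\la)\oplus D(-\la)\oplus\langle\xi\rangle$ of Proposition \ref{prop-casi-k2}; there the identity reduces to a few scalar relations, after which the general case follows by $g$-bilinearity. (Alternatively, one can avoid the second derivative of $\phi h$ by writing $R(X,Y)\phi hZ-\phi hR(X,Y)Z=[R(X,Y)\phi(hZ)-\phi R(X,Y)(hZ)]+\phi[R(X,Y)(hZ)-hR(X,Y)Z]$, applying \eqref{prop1-ken} to the first bracket with $Z$ replaced by $hZ\in\xi^{\perp}$ and an analogous $\nabla h$ lemma to the second, then simplifying with $h^2=(\ka+1)\phi^2$; since the excerpt records only $\nabla(\phi h)$, I would keep the direct Ricci-identity computation as the main argument.)
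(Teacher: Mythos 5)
Your plan --- the Ricci identity applied to the $(1,1)$-tensor $P=\phi h$, fed by the preceding lemma's expression for $\nabla(\phi h)$ and the structure equations \eqref{nablaxi}, \eqref{h-phi} --- is exactly the argument the paper intends: it omits the proof as ``analogous'' to the constant-coefficient case in \cite{dileo09}, and that proof proceeds precisely by differentiating $\nabla(\phi h)$ once more and antisymmetrising. Your sketch is correct in approach and correctly identifies where the $\mu,\nu$ terms die and where the factor $\ka+2$ comes from, so it matches the paper's (omitted) proof; the only caveat is that the decisive bookkeeping is described rather than carried out, but your proposed eigenvector check against the splitting $TM=D(\la)\oplus D(-\la)\oplus\langle\xi\rangle$ is a sound way to close it.
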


Using the equations \eqref{prop1-ken} and \eqref{prop2-ken}, we can prove the following theorem analogously to Proposition 4.2 of \cite{dileo09}.

\begin{theorem}\label{teo-ken}
If $M$ is an almost Kenmotsu $(\ka,\mu,\nu)$-space with $\ka<-1$ a function that only varies in the direction of $\xi$, then
\begin{equation*}
\begin{aligned}
&R(X_{\la},Y_{\la})Z_{-\la}&=& \quad 0,\\
&R(X_{-\la},Y_{-\la})Z_{\la}&=&\quad 0,\\
&R(X_{\la},Y_{-\la})Z_{-\la}&=&  \quad -(\ka+2)g(Y_{-\la},Z_{-\la})X_{\la}, \\
&R(X_{\la},Y_{-\la})Z_{\la}&=& \quad (\ka+2)g(X_\la,Z_\la)Y_{-\la} ,\\
&R(X_\la,Y_\la)Z_\la&=&\quad (\ka+2 \la)(g(Y_\la,Z_\la)X_\la-g(X_\la,Z_\la)Y_\la),\\
&R(X_{-\la},Y_{-\la})Z_{-\la}&=& \quad (\ka - 2 \la) (g(Y_{-\la},Z_{-\la})X_{-\la}-g(X_{-\la},Z_{-\la})Y_{-\la}),
\end{aligned}
\end{equation*}
where $X_{\pm \la},Y_{ \pm \la},Z_{\pm \la}$  are eigenvectors of $\phi h$ associated to eigenvalues $\pm \la=\pm \sqrt{-1-\ka}$.
\end{theorem}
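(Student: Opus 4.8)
The plan is to follow the scheme of Proposition 4.2 of \cite{dileo09}, replacing the constant hypotheses there by ``functions varying only in the direction of $\xi$'' exactly as in the preceding statements. First I would record the linear algebra underlying Proposition \ref{prop-casi-k2}: on $M$ one has $TM=D(\la)\oplus D(-\la)\oplus\langle\xi\rangle$, where $D(\pm\la)$ are the $(\pm\la)$-eigenspaces of $\phi h$ inside $\xi^{\perp}$ and $\la=\sqrt{-1-\ka}$; since $h\phi=-\phi h$ and $\phi^{2}=-\mathrm{id}$ on $\xi^{\perp}$, the endomorphism $\phi$ interchanges $D(\la)$ and $D(-\la)$ and $h$ acts on $D(\pm\la)$ as $\mp\la\phi$ (so in particular, for $X_{\la}\in D(\la)$ one has $\phi hX_{\la}=\la X_{\la}$, $hX_{\la}=-\la\phi X_{\la}$, hence $-X_{\la}+\phi hX_{\la}=(\la-1)X_{\la}$ and $\phi X_{\la}+hX_{\la}=(1-\la)\phi X_{\la}$). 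Moreover, by \eqref{kappamunu} we have $R(X,Y)\xi=0$ whenever $X,Y\perp\xi$, so $g(R(X,Y)Z,\xi)=-g(R(X,Y)\xi,Z)=0$ for $X,Y,Z\perp\xi$; thus $R(X,Y)Z$ remains in $\xi^{\perp}$ and can be split into its $D(\la)$- and $D(-\la)$-components. All the identities \eqref{prop1-ken}, \eqref{prop2-ken} and the hypotheses of Theorem \ref{teo-ken} are in force on $\xi^{\perp}$.

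Second, I would feed eigenvectors into \eqref{prop2-ken}. Writing $R(X_{\la},Y_{\la})Z_{\la}=A+B$ with $A\in D(\la)$, $B\in D(-\la)$, the left-hand side of \eqref{prop2-ken} collapses to $2\la B$ while the right-hand side vanishes; hence $R(X_{\la},Y_{\la})Z_{\la}\in D(\la)$, and symmetrically $R(X_{-\la},Y_{-\la})Z_{-\la}\in D(-\la)$ and $R(X_{\la},Y_{\la})Z_{-\la}\in D(-\la)$ (the ``diagonal stays diagonal''). Applying the same identity with mixed eigenvectors produces the ``opposite-eigenspace'' part of each mixed term: one gets $R(X_{\la},Y_{-\la})Z_{\la}=\alpha+(\ka+2)g(X_{\la},Z_{\la})Y_{-\la}$ and $R(X_{\la},Y_{-\la})Z_{-\la}=-(\ka+2)g(Y_{-\la},Z_{-\la})X_{\la}+\beta$, where $\alpha\in D(\la)$ and $\beta\in D(-\la)$ are still undetermined and where I use $\ka+2=1-\la^{2}$.

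Third, I would eliminate $\alpha$ and $\beta$ by the pair symmetry $g(R(X,Y)Z,W)=g(R(Z,W)X,Y)$: for $U,U'\in D(\la)$, $g(\alpha,U')=g(R(X_{\la},Y_{-\la})U,U')=g(R(U,U')X_{\la},Y_{-\la})=0$, because $R(U,U')X_{\la}\in D(\la)$ by the previous step and $Y_{-\la}\in D(-\la)\perp D(\la)$; hence $\alpha=0$, and likewise $\beta=0$, yielding the third and fourth formulas of the statement. With these at hand, the first Bianchi identity on the triple $(X_{\la},Y_{\la},Z_{-\la})$, together with $R(Y_{\la},Z_{-\la})X_{\la}=(\ka+2)g(X_{\la},Y_{\la})Z_{-\la}$ and $R(Z_{-\la},X_{\la})Y_{\la}=-(\ka+2)g(X_{\la},Y_{\la})Z_{-\la}$, forces $R(X_{\la},Y_{\la})Z_{-\la}=0$, and symmetrically $R(X_{-\la},Y_{-\la})Z_{\la}=0$. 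Finally, evaluating \eqref{prop1-ken} on $(X_{\la},Y_{\la},Z_{\la})$ and using $R(X_{\la},Y_{\la})\phi Z_{\la}=0$ (as $\phi Z_{\la}\in D(-\la)$), I obtain $\phi R(X_{\la},Y_{\la})Z_{\la}=-(\la-1)^{2}(g(Y_{\la},Z_{\la})\phi X_{\la}-g(X_{\la},Z_{\la})\phi Y_{\la})$; applying $\phi$ once more, using $\phi^{2}=-\mathrm{id}$ on $\xi^{\perp}$ and $-(\la-1)^{2}=\ka+2\la$ (since $\la^{2}=-1-\ka$), gives the fifth formula, and the sixth follows in the same way with $-(\la+1)^{2}=\ka-2\la$.

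The main obstacle is organizational rather than computational: none of \eqref{prop1-ken}, \eqref{prop2-ken}, the first Bianchi identity, or the curvature symmetries alone pins down the whole tensor, so the deductions have to be sequenced correctly --- \eqref{prop2-ken} to decide which eigenspace each term lives in and to get half of the mixed terms, the pair symmetry to wipe out the residual $D(\pm\la)$-components, the first Bianchi identity to annihilate the off-diagonal terms, and \eqref{prop1-ken} to produce the two diagonal terms --- while keeping careful track of the eigenspace of each auxiliary vector. A minor point to check along the way is that restricting \eqref{prop1-ken} and \eqref{prop2-ken} to vectors orthogonal to $\xi$ loses nothing: this is precisely because $R$ preserves $\xi^{\perp}$ and the $\xi$-direction of $R$ is already governed by \eqref{eq-R-xi-x-y}.
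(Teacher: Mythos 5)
Your proof is correct and follows essentially the route the paper intends: the paper omits the argument, stating only that the theorem follows from \eqref{prop1-ken} and \eqref{prop2-ken} analogously to Proposition 4.2 of \cite{dileo09}, and your reconstruction (eigenspace decomposition via \eqref{prop2-ken}, elimination of the residual components by the pair symmetry and the first Bianchi identity, and the diagonal terms from \eqref{prop1-ken}) is exactly that argument, with all the coefficient identities such as $-(\la\mp1)^2=\ka\pm2\la$ checking out.
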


By the previous theorem and formula \eqref{eq-R-xi-x-y}, we can give explicitly the expression of the curvature tensor of an almost Kenmotsu $(\ka,\mu,\nu)$-space with $\ka<-1$:

\begin{theorem}\label{teo-est-k}
If $M$ is an almost Kenmotsu $(\ka,\mu,\nu)$-space with $\ka < -1$ a function that only varies in the direction of $\xi$, then its Riemann curvature tensor can be written as
\[R=-R_1-(\ka+1) R_3-R_{5,2}-\mu R_6+R_7-(\nu-1)R_8.\]
Therefore, $M$ is a \emph{g.$(\ka,\mu,\nu)$-s.f. with divided $R_5$} $M(f_1,\ldots,f_{5,1},f_{5,2},\ldots,f_8)$ with functions
\begin{align*}
f_1&=-1,    & f_2&=0,      & f_3&=-(\ka+1),   &  f_4&=0,\\
f_{5,1}&=0, & f_{5,2}&=-1, & f_6&=-\mu,       &  f_7&=1,  & f_8=-(\nu-1).
\end{align*}
\end{theorem}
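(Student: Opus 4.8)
The plan is to mimic the proof of Theorem~\ref{teo-est-c} (the almost cosymplectic case), replacing the structural decomposition coming from $h$ with the one coming from $\phi h$ that is available here by Proposition~\ref{prop-casi-k2}. Since $\ka<-1$, we have the orthogonal splitting $TM=D(\la)\oplus D(-\la)\oplus\langle\xi\rangle$, where $\la=\sqrt{-1-\ka}>0$ and $D(\pm\la)$ are the eigenspaces of $\phi h$. First I would decompose arbitrary vector fields as $X=X_\la+X_{-\la}+\eta(X)\xi$ and use the curvature symmetries together with \eqref{eq-R-xi-x-y} to write
\begin{equation*}
R(X,Y)Z=R(X_\la+X_{-\la},Y_\la+Y_{-\la})(Z_\la+Z_{-\la})+\eta(X)R(\xi,Y)Z+\eta(Y)R(X,\xi)Z,
\end{equation*}
exactly as in \eqref{descomposicion-c}. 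A direct computation from \eqref{eq-R-xi-x-y} and the definitions of $R_1,R_3,R_6,R_8$ should then give
\begin{equation*}
\eta(X)R(\xi,Y)Z+\eta(Y)R(X,\xi)Z=-(\ka+1)R_3(X,Y)Z-R_1\text{-part}-\mu R_6(X,Y)Z+(\nu-1)R_8(X,Y)Z,
\end{equation*}
where I will have to be careful to package the $\ka$-term from \eqref{eq-R-xi-x-y} together with the eventual $-R_1$ contribution so that the final coefficients match $f_1=-1$, $f_3=-(\ka+1)$; the bookkeeping here is the analogue of \eqref{enesima-intermedia}.

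The substantive step is evaluating $R(X_\la+X_{-\la},Y_\la+Y_{-\la})(Z_\la+Z_{-\la})$ using Theorem~\ref{teo-ken}. I would expand this into its eight trilinear pieces, discard the two vanishing ones $R(X_\la,Y_\la)Z_{-\la}=R(X_{-\la},Y_{-\la})Z_\la=0$, and substitute the four mixed/pure formulas from that theorem. Then, just as \eqref{enesima-intermedia2} is obtained from the identities $X_\la=\tfrac12(X-\eta(X)\xi+\tfrac1\la\phi hX)$ and $X_{-\la}=\tfrac12(X-\eta(X)\xi-\tfrac1\la\phi hX)$ — which hold here because on $D(\pm\la)$ we have $\phi h=\pm\la\,\mathrm{Id}$ — I would re-express everything in terms of $X,Y,Z,\phi hX,\ldots$ and $\eta$. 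Collecting the $g(\phi hX,\cdot)\phi hY$ terms produces (with the coefficient $\tfrac{\ka+2\la+\ka-2\la}{\la^2}$-type combination from the $(\ka\pm2\la)$ factors) a multiple of $-R_{5,2}$; the terms $g(Y_\la,Z_\la)X_\la$ etc., after the back-substitution, generate a combination of $R_1$, $R_7$, $R_8$ and lower-order $\eta$-terms; and the $-(\ka+2)$ mixed terms contribute further to $R_1$ and $R_3$. The claim is that the total is
\begin{equation*}
R(\widetilde X,\widetilde Y)\widetilde Z=-R_1(X,Y)Z-(\ka+1)R_3(X,Y)Z-R_{5,2}(X,Y)Z+R_7(X,Y)Z-(\nu-1)R_8(X,Y)Z
\end{equation*}
minus the contribution already accounted for by the $\xi$-terms; adding back the $\xi$-part then yields the stated formula. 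Identifying the coefficients with the definition of a g.$(\ka,\mu,\nu)$-s.f. with divided $R_5$ gives the functions $f_1,\ldots,f_8$ in the statement.

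The main obstacle I anticipate is purely computational: untangling which combination of the structural tensors $R_1,R_3,R_5,R_7,R_8$ absorbs each of the many $\eta$-laden cross terms that appear when $X_{\pm\la}$ are rewritten via $\phi h$. In particular, because $R_7$ and $R_8$ both involve $\phi h$ but differ in how $\eta$ enters, and because $R(X,Y)\xi$ is fixed by \eqref{kappamunu} while $R(X,Y)Z$ for $Z$ non-orthogonal to $\xi$ mixes contributions, one must check very carefully that no spurious $R_4$ or $R_{5,1}$ term survives and that the coefficient of $R_1$ is exactly $-1$ rather than something involving $\la$. A useful sanity check, which I would perform, is to evaluate both sides on a $\phi$-section and on pairs of eigenvectors of $\phi h$, recovering Theorem~\ref{teo-ken}, and separately to contract with $\xi$ and recover \eqref{kappamunu}; consistency of those specialisations, together with the fact (used implicitly, as in \cite{nuestro-murathan}) that $h$ is symmetric and anticommutes with $\phi$, pins down all coefficients. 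Finally, by construction $\ka=f_1-f_3=-1-(-(\ka+1))=\ka$, $\mu=f_4-f_6=\mu$ and $\nu=f_7-f_8=1+(\nu-1)=\nu$ in accordance with Proposition~\ref{prop-basica-dividido}, confirming internal consistency.
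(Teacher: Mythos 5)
Your strategy is exactly the paper's: split $TM$ into the $\phi h$-eigenbundles $D'(\la)\oplus D'(-\la)\oplus\langle\xi\rangle$ with $\la=\sqrt{-1-\ka}$, evaluate the tangential block with Theorem~\ref{teo-ken} via the back-substitutions $X_{\pm\la}=\frac{1}{2}\left(X-\eta(X)\xi\pm\frac{1}{\la}\phi hX\right)$, and handle the $\xi$-block with \eqref{eq-R-xi-x-y}. So the architecture is right; the problem is in how you allocate the terms between the two blocks.

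Your displayed identity for the $\xi$-block is false as written, and following it literally would double-count several contributions. A direct computation from \eqref{eq-R-xi-x-y} gives exactly
\begin{equation*}
\eta(X)R(\xi,Y)Z+\eta(Y)R(X,\xi)Z=-\ka R_3(X,Y)Z-\mu R_6(X,Y)Z-\nu R_8(X,Y)Z;
\end{equation*}
there is no ``$R_1$-part'', the $R_3$-coefficient is $-\ka$ (not $-(\ka+1)$) and the $R_8$-coefficient is $-\nu$ (not $\nu-1$). Every shift in the final formula --- the $-R_1$, the extra $-R_3$, the $+R_7$, the extra $+R_8$ and the $-R_{5,2}$ --- comes from the tangential block, where Theorem~\ref{teo-ken} together with the back-substitution yields $R(X_\la+X_{-\la},Y_\la+Y_{-\la})(Z_\la+Z_{-\la})=(-R_1-R_3-R_{5,2}+R_7+R_8)(X,Y)Z$. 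Relatedly, your bookkeeping of the $R_{5,2}$-coefficient is incomplete: the pure $(\ka\pm2\la)$-terms contribute $\frac{(\ka+2\la)+(\ka-2\la)}{4\la^2}=\frac{\ka}{2\la^2}$, but the mixed $(\ka+2)$-terms contribute a further $\frac{\ka+2}{2\la^2}$ to $R_{5,2}$ (they do not feed only $R_1$ and $R_3$); only the sum $\frac{\ka+1}{\la^2}=-1$ produces $f_{5,2}=-1$. With these two allocations corrected, the two blocks add up to the claimed expression and the rest of your argument goes through.
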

\begin{proof}
As $\ka < -1$, we know by Proposition \ref{prop-casi-k2} that $TM=D'(\lambda) \oplus D'(-\lambda) \oplus <\xi> $, where $\lambda=\sqrt{-1-\ka}$ . Therefore, given a differentiable vector field $X$ on  $M$, we can write $X=X_\lambda+X_{-\lambda}+\eta(X)\xi$, where $X_{\pm \lambda}$ is an eigenvector of $\phi h$ associated to the eigenvalue $\pm \lambda$, i.e. $X_{\pm \lambda} \in D'(\pm \la)$. Hence, by the properties of the Riemann curvature tensor $R$ we obtain that
\begin{equation}\label{descomposicion-k}
\begin{aligned}
R(X,Y)Z=&R(X_\lambda+X_{-\lambda},  Y_\lambda+Y_{-\lambda})(Z_\lambda+Z_{-\lambda})\\
&+\eta(X)R(\xi,Y)Z+\eta(Y)R(X,\xi)Z.
\end{aligned}
\end{equation}

By formula \eqref{eq-R-xi-x-y} and the definition of the tensors $R_1, \ldots,R_8$, we can prove that
\begin{equation}\label{intermedia4}
\begin{split}
\eta(X)R(\xi,Y)Z&+\eta(Y)R(X,\xi)Z=\\
&-\ka R_3 (X,Y)Z-\mu R_6(X,Y)Z-\nu R_8(X,Y)Z.
\end{split}
\end{equation}

By virtue of Theorem \ref{teo-ken}, we obtain
\begin{equation*}
\begin{aligned}
R( &X_\lambda +X_{-\lambda}, Y_\lambda+Y_{-\lambda})(Z_\lambda+Z_{-\lambda})=
(\ka+2\la)(g(Y_\la,Z_\la)X_\la -g(X_\la,Z_\la)Y_\la)\\
+&(\ka-2\la)(g(Y_{-\la},Z_{-\la})X_{-\la}-g(X_{-\la},Z_{-\la})Y_{-\la})\\
+&(\ka+2)(g(X_\la,Z_\la)Y_{-\la}-g(Y_{-\la},Z_{-\la})X_\la -g(Y_\la,Z_\la)X_{-\la}+g(X_{-\la},Z_{-\la})Y_\la).
\end{aligned}
\end{equation*}
From the decomposition $X=X_\lambda+X_{-\lambda}+\eta(X)\xi$, we deduce that
\[ X_\la=\fra{1}{2} \left( X-\eta(X)\xi+\fra{1}{\la}\phi h X \right), \quad
X_{-\la}=\fra{1}{2} \left(X-\eta(X)\xi-\fra{1}{\la} \phi hX \right), \]
so by a direct computation:
\begin{equation*}
\begin{aligned}
g(Y_\la,Z_\la)X_\la& -g(X_\la,Z_\la)Y_\la=\\
&=\fra{1}{4}\left((R_1+R_3)+\fra{1}{\la^2} R_{5,2}+\fra{1}{\la} (R_7+R_8)\right) (X,Y)Z,\\
g(Y_{-\la},Z_{-\la})X_{-\la}&-g(X_{-\la},Z_{-\la})Y_{-\la}=\\
&=\fra{1}{4}\left((R_1+R_3)+\fra{1}{\la^2} R_{5,2} -\fra{1}{\la} (R_7+R_8) \right)(X,Y)Z,\\
g(X_\la,Z_\la)Y_{-\la}&-g(Y_{-\la},Z_{-\la})X_\la  -g(Y_\la,Z_\la)X_{-\la}+g(X_{-\la},Z_{-\la})Y_\la=\\
&=\fra{1}{2} \left(-(R_1+R_3)+\fra{1}{\la^2}R_{5,2} \right)(X,Y)Z.
\end{aligned}
\end{equation*}

Therefore, it follows from  $\la^2=-(\ka+1)$  that
\begin{equation}\label{intermedia5}
R(X_\lambda  +X_{-\lambda}, Y_\lambda  +Y_{-\lambda})(Z_\lambda+Z_{-\lambda})=(-R_1-R_3-R_{5,2}+R_7+R_8)(X,Y)Z.
\end{equation}
Substituting \eqref{intermedia4} and \eqref{intermedia5} in \eqref{descomposicion-k} gives us
\begin{equation*}
R=-R_1-(\ka+1) R_3-R_{5,2}-\mu R_6+R_7-(\nu-1)R_8,
\end{equation*}
the formula we were looking for.
\end{proof}

\begin{remark}
By Theorem \ref{teo-xi}, we can omit in the previous theorem the hypothesis ``$\ka$ is a function that only varies in the direction of $\xi$'' if the dimension is greater than or equal to $5$. In dimension $3$ it is possible to simplify the writing of the curvature tensor, as we will see later in Corollary \ref{cor-R-otras}.
\end{remark}

\begin{example}\label{eje-ken}
By the previous theorem, the examples that G. Dileo and A. M. Pastore gave in \cite{dileo09} of almost Kenmotsu $(-1-\la^2,0,2)$-spaces, with $\la$ a positive real number, are in particular \emph{g.$(\ka,\mu,\nu)$-s.f.'s with divided $R_5$} $M(f_1,\ldots,f_{5,1},f_{5,2},\ldots,f_8)$ with functions
\begin{align*}
f_1&=-1,    & f_2&=0,      & f_3&=\la^2>0,   &  f_4&=0,\\
f_{5,1}&=0, & f_{5,2}&=-1, & f_6&=0,         &  f_7&=1, & f_8=-1.
\end{align*}
Moreover, as $f_7, f_8 \neq0$ and the writing of the curvature tensor is unique if these examples are of dimension greater than or equal to $5$ (Theorem \ref{unicidad-dividido2}), we know that they cannot be \emph{g.$(\ka,\mu)$-s.f.'s with divided $R_5$} $M(f_1,\ldots,f_{5,1},f_{5,2},f_6)$.
\end{example}

The almost Kenmotsu $(\ka,\mu,\nu)$-spaces of dimension greater than or equal to $5$ with  $\ka<-1$ cannot be \emph{g.$(\ka,\mu,\nu)$-s.f.'s}, i.e. their curvature tensors cannot be written without dividing $R_5$ in $R_{5,1}$ and $R_{5,2}$, as a consequence of Theorems \ref{unicidad-dividido2} and \ref{teo-est-k}. Moreover, we can prove a more general result when the dimension is greater than or equal to $5$.

\begin{proposition}\label{noexist-ken}
There are no almost Kenmotsu \emph{g.$(\ka,\mu,\nu)$-s.f.'s} $M(f_1,\ldots,f_8)$ of dimension greater than or equal to $5$ and $\ka=f_1-f_3 <-1$.
\end{proposition}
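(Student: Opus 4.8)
The plan is to mirror exactly the strategy used for Proposition~\ref{noexist-cos} in the almost cosymplectic case, replacing Corollary~\ref{teo-est-c2} with Theorem~\ref{teo-est-k} and Theorem~\ref{unicidad-dividido} with Theorem~\ref{unicidad-dividido2} for the almost Kenmotsu structure. First I would argue by contradiction: suppose there exists an almost Kenmotsu \emph{g.$(\ka,\mu,\nu)$-s.f.} $M(f_1,\ldots,f_8)$ of dimension $2n+1\geq5$ with $\ka=f_1-f_3<-1$. Since $R_5=R_{5,1}-R_{5,2}$, such an $M$ is automatically a \emph{g.$(\ka,\mu,\nu)$-s.f. with divided $R_5$} $M(f_1,\ldots,f_{5,1},f_{5,2},\ldots,f_8)$ with $f_{5,1}=f_5$ and $f_{5,2}=-f_5$, so its curvature tensor is
\[R=f_1R_1+f_2R_2+f_3R_3+f_4R_4+f_5R_{5,1}-f_5R_{5,2}+f_6R_6+f_7R_7+f_8R_8.\]

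Next I would invoke Proposition~\ref{prop-basica-dividido} to conclude that $M$ is a $(\ka,\mu,\nu)$-space with $\ka=f_1-f_3<-1$, $\mu=f_4-f_6$, $\nu=f_7-f_8$, and Theorem~\ref{teo-xi} to guarantee that these functions only vary in the direction of $\xi$ (which is what allows Theorem~\ref{teo-est-k} to apply). Applying Theorem~\ref{teo-est-k} then yields a second writing of the curvature tensor,
\[R=-R_1-(\ka+1)R_3-R_{5,2}-\mu R_6+R_7-(\nu-1)R_8,\]
i.e.\ a \emph{g.$(\ka,\mu,\nu)$-s.f. with divided $R_5$} structure whose $R_{5,1}$-coefficient is $0$ while its $R_{5,2}$-coefficient is $-1$.

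Finally, by Theorem~\ref{unicidad-dividido2} (the almost Kenmotsu case with $\ka<-1$) the writing of the curvature tensor as a combination of $R_1,R_2,R_3,R_4,R_{5,1},R_{5,2},R_6,R_7,R_8$ is unique, so the two expressions must have identical coefficients. Comparing the $R_{5,1}$-coefficients forces $f_5=0$, while comparing the $R_{5,2}$-coefficients forces $-f_5=-1$, i.e.\ $f_5=1$; this contradiction establishes the non-existence. I do not expect any genuine obstacle here: the only point requiring mild care is checking that all hypotheses of the cited results are genuinely met — in particular that $\ka=f_1-f_3<-1$ (rather than merely $<0$) is precisely the threshold needed to invoke both Theorem~\ref{teo-est-k} and the almost Kenmotsu clause of Theorem~\ref{unicidad-dividido2}, and that Theorem~\ref{teo-xi} supplies the ``varies only in the direction of $\xi$'' hypothesis in dimension $\geq5$. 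The dimension-$3$ case is explicitly excluded, consistent with Theorem~\ref{unicidad-dividido2} failing there.
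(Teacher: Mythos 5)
Your proposal is correct and follows essentially the same route as the paper's own proof: pass to the divided-$R_5$ form with $f_{5,1}=f_5$, $f_{5,2}=-f_5$, use Proposition~\ref{prop-basica-dividido} and Theorem~\ref{teo-xi} to apply Theorem~\ref{teo-est-k}, and then invoke the uniqueness of Theorem~\ref{unicidad-dividido2} to force the contradiction $f_5=0$ and $f_5=1$. No gaps; the hypotheses you flag ($\ka<-1$, dimension $\geq 5$) are exactly the ones the paper checks.
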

\begin{proof}
We will prove the result by contradiction. Let us suppose that $M(f_1,\ldots,f_8)$ is an almost Kenmotsu \emph{g.$(\ka,\mu,\nu)$-s.f.} of dimension greater than or equal to $5$ with $\ka<-1$. Then $M$ is also a \emph{g.$(\ka,\mu,\nu)$-s.f. with divided $R_5$} with $f_{5,1}=f_5$ and $f_{5,2}=-f_5$, so
\[R=f_1R_1+f_2 R_2+f_3R_3+f_4 R_4  +f_5 R_{5,1}-f_5R_{5,2}+f_6R_6+f_7R_7+f_8R_8.\]

On the other hand, $M$ is a $(f_1-f_3,f_4-f_6,f_7-f_8)$-space with $\ka=f_1-f_3<-1$ a function that only varies in the direction of $\xi$ (Proposition \ref{prop-basica-dividido} and Theorem \ref{teo-xi}). Applying Theorem \ref{teo-est-k} we obtain that its curvature tensor can be written as \[R=-R_1-(f_1-f_3+1)R_3-R_{5,2}-(f_4-f_6)R_6+R_7-(f_7-f_8-1)R_8.\]

By Theorem \ref{unicidad-dividido2}, the writing of the curvature tensor is unique, so we have in particular that $f_5=0$ and $f_5=1$, which is absurd.
\end{proof}

We can also use Theorem  \ref{unicidad-dividido2} to determine some relations between the functions of an almost Kenmotsu \emph{g.$(\ka,\mu,\nu)$-s.f. with divided $R_5$} $M(f_1,\ldots,f_{5,1},f_{5,2},\ldots,f_8)$ of dimension greater than or equal to $5$ with $f_1-f_3<-1$, as we did in Theorem \ref{teodim5dividido-cos} for the almost cosymplectic structure.

\begin{theorem}\label{teodim5dividido-ken}
Let $M(f_1,\ldots,f_{5,1},f_{5,2},\ldots,f_8)$  be an almost Kenmotsu \emph{g.$(\ka,\mu)$-s.f. with divided $R_5$}. If $M$ is of dimension greater than or equal to $5$ and satisfies  $f_1-f_3<-1$, then $M$ verifies
\[f_1=-1, \ f_2=f_4=f_{5,1}=0, \  f_{5,2}=-1, \ f_7=1, \ f_3>0,\]
and $f_3,f_6,f_8$ are functions that only vary in the direction of $\xi$, i.e. $M$ is a $(-1-f_3,-f_6,1-f_8)$-space with $f_3>0$.
\end{theorem}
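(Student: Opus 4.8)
The plan is to mirror the argument used for Theorem \ref{teodim5dividido-cos}, replacing the almost cosymplectic ingredients by their almost Kenmotsu counterparts. First I would invoke Proposition \ref{prop-basica-dividido} to conclude that $M$ is a $(\ka,\mu,\nu)$-space with $\ka=f_1-f_3$, $\mu=f_4-f_6$ and $\nu=f_7-f_8$; by hypothesis $\ka=f_1-f_3<-1$. Since $\dim M=2n+1\geq 5$, Theorem \ref{teo-xi} guarantees that $\ka$, $\mu$ and $\nu$ only vary in the direction of $\xi$, so in particular the standing hypothesis of Theorem \ref{teo-est-k} (``$\ka$ is a function that only varies in the direction of $\xi$'') is automatically fulfilled.

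Next I would apply Theorem \ref{teo-est-k}, which yields the explicit writing
\[R=-R_1-(\ka+1)R_3-R_{5,2}-\mu R_6+R_7-(\nu-1)R_8.\]
Comparing this with the defining expression $R=f_1R_1+f_2R_2+f_3R_3+f_4R_4+f_{5,1}R_{5,1}+f_{5,2}R_{5,2}+f_6R_6+f_7R_7+f_8R_8$ and using the uniqueness of the writing of the curvature tensor for an almost Kenmotsu \emph{g.$(\ka,\mu,\nu)$-s.f. with divided $R_5$} of dimension $\geq 5$ with $\ka<-1$ (Theorem \ref{unicidad-dividido2}), I would read off
\[f_1=-1,\quad f_2=0,\quad f_4=0,\quad f_{5,1}=0,\quad f_{5,2}=-1,\quad f_7=1,\]
together with $f_3=-(\ka+1)$, $f_6=-\mu$ and $f_8=-(\nu-1)$. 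Recalling that $\ka=f_1-f_3$, $\mu=f_4-f_6$ and $\nu=f_7-f_8$, these last three identities are tautologically consistent with the previous ones, so no contradiction arises. This already gives $f_1=-1$, $f_2=f_4=f_{5,1}=0$, $f_{5,2}=-1$ and $f_7=1$, as claimed.

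Finally, from $\ka=f_1-f_3=-1-f_3<-1$ I would deduce $f_3>0$. Then $M$ is a $(-1-f_3,-f_6,1-f_8)$-space, and since the functions $\ka=-1-f_3$, $\mu=-f_6$ and $\nu=1-f_8$ only vary in the direction of $\xi$ (Theorem \ref{teo-xi}), so do $f_3$, $f_6$ and $f_8$. The argument is essentially routine; the only points requiring care are to verify that Theorem \ref{teo-est-k} is applicable — which is precisely where the dimensional hypothesis $\geq 5$ enters, via Theorem \ref{teo-xi} — and to use the almost Kenmotsu uniqueness statement (Theorem \ref{unicidad-dividido2}) rather than the almost cosymplectic one (Theorem \ref{unicidad-dividido}).
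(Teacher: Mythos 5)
Your proposal is correct and follows essentially the same route as the paper's own proof: Proposition \ref{prop-basica-dividido} to identify $(\ka,\mu,\nu)$, Theorem \ref{teo-est-k} for the explicit writing of $R$, Theorem \ref{unicidad-dividido2} for uniqueness, and Theorem \ref{teo-xi} for the dependence on $\xi$ only. Your explicit remark that Theorem \ref{teo-xi} is what validates the hypothesis of Theorem \ref{teo-est-k} in dimension $\geq 5$ is a small clarification the paper leaves implicit, but the argument is the same.
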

\begin{proof}
By Theorem \ref{prop-basica-dividido}, we know that $M$ is a $(\ka,\mu,\nu)$-space with $\ka=f_1-f_3<-1$, $\mu=f_4-f_6$ and $\nu=f_7-f_8$. We can therefore apply Theorem \ref{teo-est-k}, which says that the Riemann curvature tensor can be written as
\begin{equation*}
R=-R_1-(f_1-f_3+1) R_3-R_{5,2}-(f_4-f_6) R_6+R_7-(f_7-f_8-1) R_8.
\end{equation*}
By the definition of \emph{g.$(\ka,\mu,\nu)$-s.f. with divided  $R_5$} and the uniqueness of the writing of the curvature tensor (Theorem \ref{unicidad-dividido2}), we obtain that
\[f_1=-1, \ f_2=f_4=f_{5,1}=0, \  f_{5,2}=-1, \ f_7=1.\]
Therefore, $\ka=-1-f_3$, $\mu=-f_6$ and $\nu=1-f_8$. By hypothesis, $f_1-f_3=-1-f_3<-1$, so $f_3>0$. The rest of the result is deduced from Theorem  \ref{teo-xi}.
\end{proof}


Finally, we will see what happens to an almost cosymplectic or almost Kenmotsu $(\ka,\mu,\nu)$-space of dimension $3$. Using  formula \eqref{eq-R-xi-x-y}, we can prove an analogous result to Theorem 3.1 of \cite{nuestro-murathan}.

\begin{theorem}\label{prop-R-otras}
Let  $M^3$ be an almost cosymplectic (resp. almost Kenmotsu) $(\ka,\mu,\nu)$-space with $\ka<0$ (resp. $\ka<-1$). Then its curvature tensor can be written as
\[R=\left( \fra{\tau}{2} -2 \ka \right) R_1+\left( \fra{\tau}{2} -3 \ka
\right)R_3+\mu R_4 +\nu R_7,\]
where $\tau$ is the scalar curvature of $M$ and the tensors $R_1,R_3,R_4,R_7$ are the ones that appear in \eqref{R1-R6} and \eqref{def-R7}.
\end{theorem}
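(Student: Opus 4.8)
The plan is to use the fact that on any $3$-dimensional Riemannian manifold the Riemann curvature tensor is completely determined by the Ricci operator $Q$ (with $\mathrm{Ric}(X,Y)=g(QX,Y)$) and the scalar curvature $\tau$, through the classical identity
\[
R(X,Y)Z=g(QY,Z)X-g(QX,Z)Y+g(Y,Z)QX-g(X,Z)QY-\frac{\tau}{2}\bigl(g(Y,Z)X-g(X,Z)Y\bigr),
\]
so that the whole problem reduces to computing $Q$ on our $(\ka,\mu,\nu)$-space. Since $\ka<0$ (resp.\ $\ka<-1$), Proposition \ref{prop-casi-c} (resp.\ Proposition \ref{prop-casi-k2}) gives $TM=D(\la)\oplus D(-\la)\oplus\langle\xi\rangle$ with $\la>0$ and each $D(\pm\la)$ one-dimensional, so the computations below become transparent in the orthonormal frame $\{e,\phi e,\xi\}$ with $e$ a unit eigenvector of $h$ (resp.\ of $\phi h$); in fact the argument can also be carried out without choosing a frame, using only \eqref{kappamunu}.

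First I would compute the Ricci tensor in the $\xi$-direction. Summing $g(R(e_i,X)\xi,e_i)$ over the frame and using \eqref{kappamunu} together with $h\xi=0$, $\mathrm{tr}\,h=0$ and $\mathrm{tr}(\phi h)=0$ — the last because $h\phi=-\phi h$ forces $\mathrm{tr}(\phi h)=\mathrm{tr}(h\phi)=-\mathrm{tr}(\phi h)$ — one obtains $\mathrm{Ric}(X,\xi)=2\ka\,\eta(X)$ for every $X$; in particular $\mathrm{Ric}(\xi,\xi)=2\ka$ and $Q\xi=2\ka\,\xi$. Then I would specialise the displayed curvature identity to $Z=Y=\xi$, equate it with $R(X,\xi)\xi=\ka(X-\eta(X)\xi)+\mu\,hX+\nu\,\phi hX$ coming from \eqref{kappamunu}, and solve for $QX$; substituting the values of $\mathrm{Ric}(X,\xi)$ and $Q\xi$ just found gives the compact expression
\[
QX=\Bigl(\frac{\tau}{2}-\ka\Bigr)X+\mu\,hX+\nu\,\phi hX+\Bigl(3\ka-\frac{\tau}{2}\Bigr)\eta(X)\xi .
\]

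Finally I would plug this $Q$ back into the dimension-$3$ identity and regroup. The scalar part $(\tfrac{\tau}{2}-\ka)X$ of $Q$, combined with the $-\tfrac{\tau}{2}$-term of the identity, contributes $(\tfrac{\tau}{2}-2\ka)R_1$; the part $\mu\,hX$ produces exactly $\mu R_4$ once one recognises $g(hY,Z)X-g(hX,Z)Y+g(Y,Z)hX-g(X,Z)hY$ as $R_4(X,Y)Z$; the part $\nu\,\phi hX$ produces $\nu R_7$ by the analogous identification with \eqref{def-R7}; and the part $(3\ka-\tfrac{\tau}{2})\eta(X)\xi$ yields $(\tfrac{\tau}{2}-3\ka)R_3$, since $\eta(Y)\eta(Z)X-\eta(X)\eta(Z)Y+g(Y,Z)\eta(X)\xi-g(X,Z)\eta(Y)\xi=-R_3(X,Y)Z$. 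Assembling the pieces gives $R=(\tfrac{\tau}{2}-2\ka)R_1+(\tfrac{\tau}{2}-3\ka)R_3+\mu R_4+\nu R_7$, as claimed. The only slightly delicate steps are the determination of $\mathrm{Ric}(\cdot,\xi)$ (which is where $\mathrm{tr}(\phi h)=0$ is really needed) and the sign bookkeeping in the final regrouping; everything else is routine, and the computation is identical for the almost cosymplectic and the almost Kenmotsu cases, as it uses only \eqref{kappamunu} and the shared properties of $h$ and $\phi h$.
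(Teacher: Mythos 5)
Your proposal is correct and follows essentially the same route as the paper: the dimension-three expression of $R$ in terms of $Q$ and $\tau$, the computation $Q\xi=2\kappa\xi$ (which you obtain from $\mathrm{tr}\,h=\mathrm{tr}(\phi h)=0$ rather than from the eigenbasis $\{E,\phi E,\xi\}$, a cosmetic difference), then solving for $QX$ from $R(X,\xi)\xi$ and regrouping into $R_1,R_3,R_4,R_7$. All the sign bookkeeping you describe, including the identification of the $\eta\otimes\xi$ part of $Q$ with $-R_3$, matches the paper's computation.
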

\begin{proof}
Firstly, we recall a formula that is valid for every Riemannian manifold of dimension $3$:
\begin{equation}\label{formula-dim3-tau-Q}
\begin{aligned}
R(X,Y)Z&=g(Y,Z)QX-g(X,Z)QY+g(QY,Z)X-g(QX,Z)Y\\
&-\fra{\tau}{2}(g(Y,Z)X-g(X,Z)Y),
\end{aligned}
\end{equation}
where $Q$ is the Ricci operator and $\tau=trQ$.

We now take a $\phi$-basis $\{ E, \phi E, \xi \}$ such that $h E=\lambda E$, where $\lambda=\sqrt{-\ka}$ (resp. $\lambda=\sqrt{-1-\ka}$), which is possible thanks to Proposition \ref{prop-basica-cos} (resp. Proposition \ref{prop-casi-k2}). Using formula \eqref{eq-R-xi-x-y} and this basis, we can compute
\begin{align*}
Q \xi =&R(\xi, E)E+R(\xi,\phi E)\phi E+R(\xi,\xi)\xi=(\ka+\la \mu )\xi + (\ka-\la \mu) \xi=2 \ka \xi.
\end{align*}

Making $Y=Z=\xi$  in \eqref{formula-dim3-tau-Q} and using that  $Q \xi=2 \ka \xi$ and $g(Q X,Y)=g(QY,X)$, we obtain:
\begin{equation*}
R(X,\xi)\xi=\left(2\ka-\fra{\tau}{2} \right)X + \left( \fra{\tau}{2}-4\ka \right)\eta(X)\xi +Q X.
\end{equation*}
Using again formula \eqref{eq-R-xi-x-y}, it follows that
\begin{align*}
Q X=&R(X,\xi)\xi-\left(2\ka-\fra{\tau}{2} \right)X -\left( \fra{\tau}{2}-4\ka \right)\eta(X)\xi \\
=&\left(\fra{\tau}{2}-\ka \right)X + \left(3 \ka- \fra{\tau}{2} \right)\eta(X)\xi +\mu h X+\nu \phi h X.
\end{align*}
Substituting this expression of $Q X$ in \eqref{formula-dim3-tau-Q} and applying  the definitions of $R_1,R_3$, $R_4$ and $R_7$, we obtain the equation we were looking for.
\end{proof}

We will now see a result that was proved for contact metric $(\ka,\mu,\nu)$-spaces with $\ka<1$ in \cite{nuestro-murathan}:

\begin{proposition}
Let $M^3$ be an almost cosymplectic (resp. almost Kenmotsu) $(\ka,\mu,\nu)$-space with $\ka < 0$ (resp. $\ka< -1$). Then its $\phi$-sectional curvature is $F=\fra{\tau}{2} -2 \ka $.
\end{proposition}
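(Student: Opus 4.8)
The plan is to compute the $\phi$-sectional curvature directly from the explicit form of the curvature tensor provided by Theorem \ref{prop-R-otras}. Recall that for a $\phi$-section at a point, one takes a unit vector $X$ orthogonal to $\xi$ and evaluates $K(X,\phi X) = R(X,\phi X,\phi X,X) = g(R(X,\phi X)\phi X, X)$. Since the statement is pointwise, I may fix such an $X$; moreover, by Proposition \ref{prop-basica-cos} (resp. Proposition \ref{prop-casi-k2}) I can choose $X$ inside one of the eigendistributions of $h$, say a unit eigenvector with $hX = \lambda X$, which will make the terms involving $h$ transparent, although in fact any unit $X \perp \xi$ will do since the formula should be independent of the choice.

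First I would substitute $Y = \phi X$, $Z = \phi X$ into the expression $R = \left(\frac{\tau}{2}-2\ka\right)R_1 + \left(\frac{\tau}{2}-3\ka\right)R_3 + \mu R_4 + \nu R_7$ and pair with $X$. Going term by term: for $R_1$, $g(R_1(X,\phi X)\phi X, X) = g(\phi X,\phi X)g(X,X) - g(X,\phi X)g(\phi X,X)$; using that $X$ is unit and orthogonal to $\xi$ we get $g(\phi X,\phi X) = g(X,X) - \eta(X)^2 = 1$ and $g(X,\phi X) = 0$, so this contributes $1$. For $R_3$, every term carries a factor $\eta(X)$ or $\eta(\phi X) = 0$ or $\eta(\phi X)$ again, hence $R_3$ contributes $0$. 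For $R_4$, one checks $g(R_4(X,\phi X)\phi X, X)$ involves $g(\phi X,\phi X)g(hX,X)$, $g(X,\phi X)g(h\phi X, X)$, $g(h\phi X,\phi X)g(X,X)$, $g(hX,\phi X)g(\phi X, X)$; the first is $g(hX,X)$, but since $h$ is symmetric and anticommutes with $\phi$ one has $\mathrm{tr}\,h = 0$ and $g(h\phi X,\phi X) = -g(\phi hX,\phi X) = -g(hX,X)$ (using $g(\phi A,\phi B)=g(A,B)$ for vectors orthogonal to $\xi$), so these two cancel, and the remaining two vanish because $g(X,\phi X)=0$; thus $R_4$ contributes $0$. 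Similarly for $R_7$, the terms $g(\phi X,\phi X)g(\phi hX, X)$ and $g(\phi h\phi X,\phi X)g(X,X)$ cancel (again $g(\phi h\phi X,\phi X) = -g(\phi h X, X)$ since $\phi h\phi = -h$ on the contact distribution, wait — $\phi h \phi X$: using $h\phi = -\phi h$ gives $\phi h\phi X = -\phi^2 h X = hX$ on vectors orthogonal to $\xi$, so $g(\phi h\phi X, \phi X) = g(hX,\phi X)$), and the remaining terms have a factor $g(X,\phi X) = 0$; so $R_7$ contributes $0$ as well.

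Hence only the $R_1$ term survives and $F = \left(\frac{\tau}{2} - 2\ka\right)\cdot 1 = \frac{\tau}{2} - 2\ka$, which is exactly the claim. I would also remark that $\tau$ is constant on a three-dimensional space only under extra hypotheses, so $F$ is a genuine function in general; the statement merely identifies it.

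The main obstacle is the bookkeeping in the $R_4$ and $R_7$ terms: one must be careful with the identities $h\phi = -\phi h$, $\eta \circ h = 0$, $\phi^2 X = -X$ on the contact distribution, and $g(\phi A, \phi B) = g(A,B)$ for $A,B \perp \xi$, to see that all the $h$-dependent and $\phi h$-dependent contributions cancel in pairs. None of this is deep, but it is the only place an error could creep in; choosing $X$ to be an eigenvector of $h$ (so that $hX = \pm\lambda X$ and $\phi hX = \pm\lambda\phi X$) streamlines the verification considerably and makes every cancellation explicit.
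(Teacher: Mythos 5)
Your proposal is correct and follows the same route as the paper: both evaluate $F=R(E,\phi E,\phi E,E)$ for a unit $E$ orthogonal to $\xi$ (the paper likewise picks an eigenvector of $h$) using the expression of $R$ from Theorem \ref{prop-R-otras}, observing that only the $R_1$ term survives while $R_3$, $R_4$ and $R_7$ contribute zero. The paper leaves that term-by-term cancellation as ``a straightforward computation,'' which you have simply carried out explicitly.
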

\begin{proof}
Since the manifold is of dimension $3$, the $\phi$-sectional curvature does not depend on the choice of the $\phi$-section. Hence we can take $F=R(E,\phi E,\phi E,E)$, where $E$ is an eigenvector of $h$ of eigenvalue $\la >0$ thanks to equation \eqref{h-phi} and the fact that $\ka < 0$ (resp. equation \eqref{h-phi} and $\ka<-1$).

It follows from Proposition \ref{prop-R-otras} that
\begin{align*}
F=R(E,\phi E,\phi E ,E)=&\left( \fra{\tau}{2} -2 \ka \right) R_1 (E,\phi E,\phi E,E)+\left( \fra{\tau}{2} -3 \ka \right) R_3 (E,\phi E,\phi E,E)\\
&+\mu R_4 (E,\phi E,\phi E,E)+\nu R_7 (E,\phi E,\phi E,E).
\end{align*}

A straightforward computation using the definition of the tensors $R_1,\ldots,R_7$ gives us the formula we wanted.
\end{proof}

Therefore, Theorem \ref{prop-R-otras} can be rewritten as:

\begin{corollary}\label{prop-R-v2-otras}
Let $M^3$ an almost cosymplectic (resp. almost Kenmotsu)  $(\ka,\mu,\nu)$-space with $\ka < 0$ (resp. $\ka< -1$). Then its curvature tensor has the form
\begin{equation*}
R=F R_1+(F-\ka) R_3+\mu R_4 +\nu R_7,
\end{equation*} where $F$ is the $\phi$-sectional curvature and $R_1,R_3,R_4,R_7$ are the tensors defined in \eqref{R1-R6} and \eqref{def-R7}. In particular, $M$ is a \emph{g.$(\ka,\mu,\nu)$-s.f.} $M(F,0,F-\ka,\mu,0,0,\nu,0)$.
\end{corollary}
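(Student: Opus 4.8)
The plan is to obtain this as an immediate consequence of Theorem \ref{prop-R-otras} together with the Proposition that precedes the corollary, which identifies the scalar-curvature combination occurring in that theorem with the $\phi$-sectional curvature. First I would recall that, under exactly the hypotheses of the corollary, Theorem \ref{prop-R-otras} already gives
\[
R=\left( \fra{\tau}{2} -2 \ka \right) R_1+\left( \fra{\tau}{2} -3 \ka\right)R_3+\mu R_4 +\nu R_7,
\]
where $\tau$ is the scalar curvature of $M$.

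Next I would invoke the preceding Proposition, valid under the same hypotheses (here one uses that in dimension $3$ the $\phi$-sectional curvature does not depend on the chosen $\phi$-section, so that $F$ is a well-defined function on $M$), which asserts $F=\frac{\tau}{2}-2\ka$. Substituting this identity into the coefficient of $R_1$ yields $\frac{\tau}{2}-2\ka=F$, and into the coefficient of $R_3$ yields $\frac{\tau}{2}-3\ka=\bigl(\frac{\tau}{2}-2\ka\bigr)-\ka=F-\ka$. Hence
\[
R=F R_1+(F-\ka) R_3+\mu R_4 +\nu R_7,
\]
which is the first assertion. For the final claim I would simply compare this expression with the definition \eqref{def-space-nu} of a \emph{g.$(\ka,\mu,\nu)$-s.f.}, reading off $f_1=F$, $f_2=0$, $f_3=F-\ka$, $f_4=\mu$, $f_5=f_6=0$, $f_7=\nu$ and $f_8=0$; this is precisely $M(F,0,F-\ka,\mu,0,0,\nu,0)$. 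One could equivalently use $R_7=-R_8$ in dimension $3$ to move $\nu$ from the seventh to the eighth slot, but the displayed choice is the one coming directly from Theorem \ref{prop-R-otras}.

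There is essentially no obstacle in this argument: the corollary is a formal rewriting of Theorem \ref{prop-R-otras} via the identity $F=\frac{\tau}{2}-2\ka$, and the only point needing a (trivial) remark is that $F$ is well defined as a function, which is guaranteed by three-dimensionality. All the genuine work has already been carried out in Theorem \ref{prop-R-otras} (the computation of $Q\xi=2\ka\xi$ and of $QX$ from \eqref{eq-R-xi-x-y} and \eqref{formula-dim3-tau-Q}) and in the preceding Proposition establishing $F=\frac{\tau}{2}-2\ka$.
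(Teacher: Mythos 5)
Your proposal is correct and follows exactly the paper's route: the corollary is stated immediately after the sentence ``Therefore, Theorem \ref{prop-R-otras} can be rewritten as,'' i.e.\ it is obtained by substituting the identity $F=\fra{\tau}{2}-2\ka$ from the preceding proposition into the coefficients of $R_1$ and $R_3$ in Theorem \ref{prop-R-otras}, just as you do. The reading-off of the functions $M(F,0,F-\ka,\mu,0,0,\nu,0)$ is likewise the intended conclusion, so there is nothing to add.
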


\begin{remark}
It is worth noting that the expression of the curvature tensor given in the previous corollary coincides with the one obtained in Corollary 3.3 of \cite{nuestro-murathan} for contact metric $(\ka,\mu,\nu)$-spaces of dimension $3$. This is because the tensor $h$ satisfies the same properties in the three structures (almost cosymplectic, almost Kenmotsu and contact metric): it is symmetric and anticommutes with $\phi$, by equations \eqref{eq-cont-h} and \eqref{nablaxi}.
\end{remark}

Under some extra hypotheses, we can write $F$ in terms of $\ka$, so the curvature tensor would be completely determined by the functions $\ka, \mu$ and $\nu$.

\begin{corollary}\label{cor-R-otras}
Let $M^3$  be an almost cosymplectic $(\ka,\mu,\nu)$-space. If $\ka<0, \mu,\nu$ only vary in the direction of $\xi$, then its curvature tensor can be written as
\[R=-\ka R_1-2 \ka R_3+\mu R_4+\nu R_7,\]
where $R_1, R_2, R_4, R_7$ are the tensors defined in \eqref{R1-R6} and \eqref{def-R7}.

Let $M^3$ be an almost Kenmotsu $(\ka,\mu,\nu)$-space. If $\ka<-1$ is a function that only varies in the direction of $\xi$, then its curvature tensor can be written as
\[R=-(\ka+2)R_1-2(\ka+1)R_3+\mu R_4+\nu R_7.\]
\end{corollary}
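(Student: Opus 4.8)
The plan is to derive both formulas from Corollary \ref{prop-R-v2-otras}, which already gives $R = F R_1 + (F-\ka) R_3 + \mu R_4 + \nu R_7$ for a $3$-dimensional almost cosymplectic (resp. almost Kenmotsu) $(\ka,\mu,\nu)$-space with $\ka<0$ (resp. $\ka<-1$). Since both hypotheses here ($\ka,\mu,\nu$ varying only in the direction of $\xi$ in the cosymplectic case; $\ka$ varying only in the direction of $\xi$ in the Kenmotsu case) are strictly stronger than what Corollary \ref{prop-R-v2-otras} requires, the only remaining task is to compute the $\phi$-sectional curvature $F$ explicitly in terms of $\ka$ under these extra hypotheses. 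So the whole proof reduces to showing $F = -\ka$ in the cosymplectic case and $F = -(\ka+2)$ in the Kenmotsu case; substituting into Corollary \ref{prop-R-v2-otras} then yields $F - \ka = -2\ka$ and $F - \ka = -2(\ka+1)$ respectively, which are exactly the stated coefficients of $R_3$.

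To compute $F$, I would take the $\phi$-basis $\{E, \phi E, \xi\}$ with $hE = \la E$, where $\la = \sqrt{-\ka}$ (cosymplectic) or $\la = \sqrt{-1-\ka}$ (Kenmotsu), available via Proposition \ref{prop-basica-cos} (resp. Proposition \ref{prop-casi-k2}). In the cosymplectic case, I would invoke Theorem \ref{teo-est-c} if the dimension were $\geq 5$, but in dimension $3$ I instead use Theorem \ref{teo-endo}: indeed, the extra hypothesis forces $\xi(\ka) = 0$ by \eqref{xi-ka}, hence $\nu = 0$ (Proposition \ref{prop-casi-c} or \ref{prop-basica-cos}), so $M^3$ is an almost cosymplectic $(\ka,\mu)$-space with $\ka,\mu$ varying only in the direction of $\xi$. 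Writing $E = E_\la$ and $\phi E = (\phi E)_{-\la}$ (since $h\phi = -\phi h$), the formula $R(X_\la, Y_{-\la})Z_{-\la} = -\ka\, g(X_\la, \phi Z_{-\la})\phi Y_{-\la}$ from Theorem \ref{teo-endo} gives $R(E, \phi E)\phi E = -\ka\, g(E, \phi(\phi E))\, \phi(\phi E) = -\ka\, (-1)(-E) \cdot(\text{sign bookkeeping})$, and pairing with $E$ produces $F = R(E,\phi E,\phi E, E) = -\ka$. Alternatively, and more robustly, one sidesteps Theorem \ref{teo-endo} entirely and simply runs the argument of the unnumbered Proposition just before this Corollary with $F = \tau/2 - 2\ka$, then evaluates $\tau$ using the expression for $QX$ from the proof of Theorem \ref{prop-R-otras}: $Q\xi = 2\ka\xi$ and $QE = (\tau/2 - \ka)E + \mu\la E + \nu\la\phi E$, $Q\phi E = (\tau/2-\ka)\phi E - \mu\la\phi E - \nu\la E$; taking the trace gives $\tau = 2\ka + (\tau/2 - \ka + \mu\la) + (\tau/2 - \ka - \mu\la) = \tau$, an identity, so one instead computes $F$ from $R(E,\phi E,\phi E,E)$ directly with the known sectional curvatures of the $\la$- and $-\la$-eigenplanes.

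For the Kenmotsu case I would proceed symmetrically via Theorem \ref{teo-ken}, which in dimension $3$ (so $n = 1$, meaning each eigenspace $D'(\pm\la)$ is one-dimensional and spanned by a single eigenvector of $\phi h$). Here $E$ is an eigenvector of $h$, hence $\phi h E = -h\phi E$ shows $\phi E$ is also relevant; picking $X_\la, Y_{-\la}$ appropriately among $\{E, \phi E\}$ and using $R(X_\la, Y_{-\la})Z_\la = (\ka+2)g(X_\la, Z_\la)Y_{-\la}$ together with $R(X_\la, Y_{-\la})Z_{-\la} = -(\ka+2)g(Y_{-\la}, Z_{-\la})X_\la$, one extracts $R(E, \phi E, \phi E, E) = -(\ka+2)$, i.e. $F = -(\ka+2)$. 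Substituting into $R = F R_1 + (F-\ka)R_3 + \mu R_4 + \nu R_7$ gives $R = -(\ka+2)R_1 - (2\ka+2)R_3 + \mu R_4 + \nu R_7$, as claimed.

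The main obstacle is purely bookkeeping: correctly identifying which of $E$ and $\phi E$ lies in the $+\la$ versus the $-\la$ eigenspace of $h$ (resp. of $\phi h$), tracking the signs coming from $\phi^2 E = -E$ and $h\phi = -\phi h$, and making sure the four-tensor $R(E,\phi E,\phi E,E)$ is assembled with the right index order so that $R_1(E,\phi E,\phi E,E) = 1$ and $R_3(E,\phi E,\phi E,E) = -1$ (using $\eta(E) = \eta(\phi E) = 0$), while $R_4$ and $R_7$ contribute nothing to the $\phi$-sectional curvature since they involve $hE \propto E$ or $\phi h E \propto \phi E$ paired against orthogonal combinations — this vanishing of the $R_4$ and $R_7$ contributions is exactly the computation done in the preceding Proposition and need not be redone. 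Once $F$ is pinned down, the rest is immediate from Corollary \ref{prop-R-v2-otras}.
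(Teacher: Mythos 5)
Your Kenmotsu half is essentially the paper's own argument (apply Theorem \ref{teo-ken} to $E\in D'(\la)$ to get $F=-(\ka+2)$, then substitute into Corollary \ref{prop-R-v2-otras}), and that part is fine. The cosymplectic half, however, has a genuine gap. You claim that the hypothesis ``$\ka$ only varies in the direction of $\xi$'' forces $\xi(\ka)=0$ and hence $\nu=0$, so that $M^3$ becomes a $(\ka,\mu)$-space to which Theorem \ref{teo-endo} applies directly. This confuses ``varies only in the direction of $\xi$'' (i.e.\ $X(\ka)=0$ for $X\perp\xi$) with ``is constant along $\xi$''. By \eqref{xi-ka} with $\alpha=0$ one has $\xi(\ka)=2\ka\nu$, so with $\ka<0$ the vanishing of $\nu$ is equivalent to $\ka$ being constant, which is \emph{not} assumed; indeed Example \ref{eje-cos-nu} exhibits exactly such $3$-dimensional spaces with $\nu=-\xi(\beta)/\beta^2\neq 0$. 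Without $\nu=0$, Theorem \ref{teo-endo} cannot be applied to $M$ itself, so your computation of $F=-\ka$ does not go through as written. Your fallback route does not rescue this: you correctly observe that tracing $Q$ only returns the identity $\tau=\tau$, and the ``known sectional curvatures of the $\la$- and $-\la$-eigenplanes'' that you then invoke are precisely what is not known for a genuine $(\ka,\mu,\nu)$-space in dimension $3$.

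The missing ingredient, which is how the paper proceeds, is the $D$-homothetic deformation \eqref{transf-cos} with $\alpha=1$ and $\beta=\sqrt{-\ka}$: by Proposition \ref{prop-dacko2} this turns $M$ into a $(-1,\overline{\mu})$-space (killing $\nu$), where Theorem \ref{teo-endo} \emph{does} apply and gives $\overline{R}(E,\phi E,\phi E,E)=1$. One then transfers back through the explicit relation
\[
\overline{R}(\w{X},\w{Y})\w{Z}=R(\w{X},\w{Y})\w{Z}+\fra{\ka+1}{\ka}\,R_{5,2}(X,Y)Z,
\]
established as in the proof of Corollary \ref{teo-est-c2}, and evaluates $R_{5,2}(E,\phi E,\phi E,E)$ on the eigenbasis to conclude $F=-\ka$. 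So the statement is true, but your cosymplectic argument needs this deformation step inserted; as written it silently assumes $\nu=0$.
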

\begin{proof}
Since the manifold is of dimension $3$,  it follows from Proposition \ref{prop-basica-cos} that when $M$ is almost cosymplectic we can take $F=R(E,\phi E,\phi E,E)$, where $E$ is an eigenvector of $h$ associated to the eigenvalue $\la=\sqrt{-\ka} >0$.

We do not know in general the $\phi$-sectional curvature of a $(\ka,\mu,\nu)$-space, but we can use a $D$-homothetic deformation \eqref{transf-cos} with $\alpha=1$ and $\beta=\sqrt{-\ka}$ to obtain a $(-1,\overline{\mu})$-space with $\overline{\mu}=\mu / \sqrt{-\ka}$.

Reasoning analogously to the proof of Corollary \ref{teo-est-c2}, we get that, for every $X,Y,Z$ vector fields on $M$
\[\overline{R}(\w{X}, \w{Y})\w{Z}=R(\w{X}, \w{Y})\w{Z}+\fra{\ka+1}{\ka} R_{5,2} (X,Y)Z,\]
where $X=\w{X}+\eta(X) \xi$ and $\w{X}$ is orthogonal to $\xi$.

Therefore, we have in particular that
\begin{equation}\label{estonoseacaba}
F=R(E,\phi E,\phi E,E)=\overline{R}(E,\phi E,\phi E,E)-\fra{\ka+1}{\ka} R_{5,2} (E,\phi E,\phi E,E).
\end{equation}

On the other hand, since $E$ is an unit vector field with respect to $g$ and orthogonal to $\xi$, by \eqref{transf-cos} it is also unit with respect to $\overline{g}$ and orthogonal to $\overline{\xi}$, so it follows from Theorem \ref{teo-endo} that
\[\overline{R}(E,\phi E,\phi E,E)= \overline{g}(E,{\overline{\phi}}^2 E) \overline{g}(\overline{\phi}^2 E,E)=1.\]

By the definition of the tensor $R_{5,2}$  and the properties of $h$ (formulas \eqref{def-R52} and \eqref{nablaxi}):
\[R_{5,2} (E,\phi E,\phi E,E)=g(\phi h E,E)^2+g(h E,E)^2=\la^2=-\ka.\]

Substituting the last two equations in \eqref{estonoseacaba} we obtain that $F=-\ka,$ which jointly with Corollary \ref{prop-R-v2-otras} gives the result we were looking for.

If $M$ is almost Kenmotsu, then we can take $E$ an eigenvector of $\phi h$ associated to the eigenvalue $\la=\sqrt{-1-\la}\neq 0$ by virtue of Proposition \ref{prop-casi-k2}. We know by Theorem \ref{teo-ken} that $F=-(\ka+2) g(\phi E,\phi E)g(E,E)=-(\ka+2)$ and we conclude that  $R=-(\ka+2)R_1-2(\ka+1)R_3+\mu R_4+\nu R_7.$
\end{proof}

\begin{remark}
The expressions of the curvature tensor given in  Corollary \ref{teo-est-c2} and Theorem \ref{teo-est-k} coincide with that of the previous corollary in dimension $3$. This is true because the following equations hold in dimension $3$ when the structure is almost cosymplectic or almost Kenmotsu:
\begin{equation}
R_2=3(R_1+R_3), \ R_6=-R_4, \ R_8=-R_7.
\end{equation}

If the manifold is an almost cosymplectic $(\ka,\mu,\nu)$-space, it can also be proved that $R_{5,2}=\ka (R_1+R_3)$, so the curvature tensor can be written as
\begin{equation*}
R=-\ka R_3-R_{5,2}-\mu R_6-\nu R_8=-\ka R_1-2\ka R_3 +\mu R_4+\nu R_7.
\end{equation*}

If the manifold is an almost Kenmotsu $(\ka,\mu,\nu)$-space, it is also true that $R_{5,2}=(\ka+1) (R_1+R_3)$, so the curvature tensor can be written as
\begin{align*}
R=&-R_1-(\ka+1) R_3-R_{5,2}-\mu R_6+R_7-(\nu-1)R_8\\
=&-(\ka+2)R_1-2(\ka+1)R_3+\mu R_4+\nu R_7.
\end{align*}
\end{remark}

\begin{example}\label{eje-dim3-cos+ken}
By the previous corollary, the examples of almost cosymplectic  $(-1,\mu,0)$-spaces (with $\mu$ varying only in the direction of $\xi$) that appear in \cite{dacko2005-b} are in dimension  $3$ examples of  \emph{g.$(\ka,\mu)$-s.f.'s} $M^3(f_1,\ldots,f_6)$ with functions
\[f_1=1,\ f_2=0, \ f_3=2, \ f_4=\mu, \ f_5=f_6=0.\]

\

Analogously, the examples of almost Kenmotsu  $(-1-\la^2,0,2)$-spaces (with $\la$ a positive real number) that appear in \cite{dileo09} are examples of \emph{g.$(\ka,\mu,\nu)$-s.f.'s} $M^3(f_1,\ldots,f_8)$ with functions:
\[f_1=-1+\la^2, \ f_2=0, \ f_3=2 \la^2, \ f_4=f_5=f_6=0, \ f_7=2, \ f_8=0.\]
\end{example}

\

Finally, it remains to be seen what happens if a 3-dimensional manifold is almost cosymplectic and satisfies $\ka=0$ or almost Kenmotsu and satisfies $\ka=-1$. In order to study the curvature tensor, we recall the next result:

\begin{proposition}[\cite{olszak91}]\label{prop-beta}
If $M^3$ is a trans-Sasakian manifold $(0,\beta)$, then its curvature tensor can be written as
\begin{equation*}
R=\left( \fra{ \tau}{2}+2\beta^2+2\beta' \right) R_1+ \left(\fra{\tau}{2}+3\beta^2+3\beta' \right)R_3,
\end{equation*}  where $\tau$ is the scalar curvature of the manifold.
\end{proposition}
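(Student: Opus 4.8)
The plan is to mimic the proof of Theorem~\ref{prop-R-otras}: in dimension $3$ the whole curvature tensor is determined by the Ricci operator $Q$ and the scalar curvature $\tau$ through the identity \eqref{formula-dim3-tau-Q}, so it is enough to compute $QX$ from the trans-Sasakian $(0,\beta)$ structure equations.

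First I would recall the basic identity $\nabla_X\xi=\beta(X-\eta(X)\xi)$, which holds on any trans-Sasakian $(0,\beta)$ manifold, together with the fact that on such a manifold of dimension $3$ the gradient of $\beta$ is collinear with $\xi$, i.e. $X(\beta)=\eta(X)\,\xi(\beta)$ for every $X$; writing $\beta':=\xi(\beta)$, this last fact is part of Olszak's description of $3$-dimensional trans-Sasakian manifolds (for $\alpha=0$ the identity relating $\operatorname{grad}\alpha$ and $\operatorname{grad}\beta$ reduces to $\phi(\operatorname{grad}\beta)=0$, hence $\operatorname{grad}\beta=\beta'\xi$). Differentiating $\nabla_X\xi$ once more and inserting these two facts into $R(X,Y)\xi=\nabla_X\nabla_Y\xi-\nabla_Y\nabla_X\xi-\nabla_{[X,Y]}\xi$, a direct computation yields
\begin{equation*}
R(X,Y)\xi=-(\beta^2+\beta')\{\eta(Y)X-\eta(X)Y\}.
\end{equation*}

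From here the remaining steps are exactly the linear bookkeeping carried out in the proof of Theorem~\ref{prop-R-otras}. By the symmetries of the curvature tensor the formula above gives $R(\xi,X)Y=-(\beta^2+\beta')\{g(X,Y)\xi-\eta(Y)X\}$, and tracing this over a $\phi$-basis produces $Q\xi=-2(\beta^2+\beta')\xi$. Setting $Y=Z=\xi$ in \eqref{formula-dim3-tau-Q} and solving for $QX$ gives
\begin{equation*}
QX=\left(\fra{\tau}{2}+\beta^2+\beta'\right)X-\left(3(\beta^2+\beta')+\fra{\tau}{2}\right)\eta(X)\xi .
\end{equation*}
Substituting this expression of $QX$ back into \eqref{formula-dim3-tau-Q} and regrouping the terms according to the definitions of $R_1$ and $R_3$ in \eqref{R1-R6} — the pieces carrying a factor $\eta$ assembling into a multiple of $R_3$ and the remaining ones into a multiple of $R_1$ — yields exactly
\[R=\left(\fra{\tau}{2}+2\beta^2+2\beta'\right)R_1+\left(\fra{\tau}{2}+3\beta^2+3\beta'\right)R_3 .\]

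The only genuinely structure-specific step is the computation of $R(X,Y)\xi$, and within it the justification that $\operatorname{grad}\beta$ has no component orthogonal to $\xi$; this is the point where I would expect to spend most of the effort, since everything after the displayed formula for $R(X,Y)\xi$ is the same routine manipulation of the dimension-$3$ identity already performed in the proof of Theorem~\ref{prop-R-otras}.
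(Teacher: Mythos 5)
First, a point of comparison: the paper gives no proof of this proposition at all — it is quoted verbatim from \cite{olszak91} — so there is no internal argument to measure yours against. That said, your computational skeleton is the natural one and the algebra checks out. From $\nabla_X\xi=\beta(X-\eta(X)\xi)$ one indeed gets
\[
R(X,Y)\xi=X(\beta)\,(Y-\eta(Y)\xi)-Y(\beta)\,(X-\eta(X)\xi)-\beta^{2}\,(\eta(Y)X-\eta(X)Y),
\]
and once the first two terms collapse to $-\xi(\beta)\,(\eta(Y)X-\eta(X)Y)$, your values $Q\xi=-2(\beta^{2}+\beta')\xi$, the expression for $QX$, and the final regrouping into $R_1$ and $R_3$ through \eqref{formula-dim3-tau-Q} are all correct.

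The genuine gap is precisely the step you flagged: the collinearity of $\operatorname{grad}\beta$ with $\xi$ is \emph{not} a consequence of the trans-Sasakian $(0,\beta)$ condition in dimension $3$, and no amount of effort will derive it there. Differentiating $d\Phi=2\beta\,\eta\wedge\Phi$ only yields $d\beta\wedge\eta\wedge\Phi=0$, which forces $d\beta\wedge\eta=0$ when the dimension is at least $5$ (nondegeneracy of $\Phi$ on $\ker\eta$) but is vacuous in dimension $3$. Concretely, on $\mathbb{R}^{3}$ take $g=e^{2xz}(dx^{2}+dy^{2})+dz^{2}$, $\xi=\partial_z$, $\eta=dz$, $\phi\partial_x=\partial_y$, $\phi\partial_y=-\partial_x$: this structure is normal with $d\eta=0$ and $d\Phi=2\beta\,\eta\wedge\Phi$ for $\beta=x$, hence trans-Sasakian of type $(0,x)$, yet $\operatorname{grad}\beta$ is orthogonal to $\xi$, the extra terms in the display above survive, and the proposition's formula fails. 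The collinearity is really an additional hypothesis built into the setting of \cite{olszak91} (closedness of the Lee form $2\beta\eta$ is part of being locally conformal almost cosymplectic), so your argument becomes a complete proof once you \emph{assume} $d\beta\wedge\eta=0$ rather than try to establish it; and since the paper only invokes the proposition with $\beta$ constant ($\beta=0$ for the cosymplectic case and $\beta=\pm1$ for the Kenmotsu one), nothing downstream is affected.
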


If $M^3$ is an almost cosymplectic $(\ka,\mu,\nu)$-space with $\ka=0$, then $h=0$ and $M$ is cosymplectic by Proposition \ref{prop-basica-cos}. By Proposition \ref{prop-beta} (with $\beta=0$), we have that the curvature tensor of the manifold can be written as  $R=\fra{\tau}{2}R_1+\fra{\tau}{2}R_3,$ which coincides with the result obtained in Theorem \ref{prop-R-otras}.

Analogously, if $M^3$ is an almost Kenmotsu $(\ka,\mu,\nu)$-space and $\ka=-1$, then $h=0$ and $M$ is a Kenmotsu manifold by Proposition \ref{prop-casi-k2}. By virtue of Proposition \ref{prop-beta} (with $\beta=-1$), we obtain that the curvature tensor of the manifold can be written as $R=\left(\fra{\tau}{2}+2 \right) R_1+\left(\fra{\tau}{2}+3 \right)R_3,$ which again coincides with Theorem \ref{prop-R-otras}.

\end{document}